\theoremstyle{plain}
\newtheorem{thm}{\protect\theoremname}[section]
  \theoremstyle{plain}
  \newtheorem{prop}[thm]{\protect\propositionname}
  \theoremstyle{definition}
  \newtheorem{defn}[thm]{\protect\definitionname}
  \theoremstyle{plain}
  \newtheorem{lem}[thm]{\protect\lemmaname}
  \theoremstyle{plain}
  \newtheorem{cor}[thm]{\protect\corollaryname}
  \theoremstyle{remark}
  \newtheorem{rem}[thm]{\protect\remarkname}
\newenvironment{keywords}{ \noindent\footnotesize\textbf{Keywords and phrases:}}{}
\newenvironment{class}{\noindent\footnotesize\textbf{Mathematics subject classification 2010:}}{}
\newcommand*{\trace}{\operatorname{trace}}
\newcommand*{\dive}{\operatorname{div}}
\newcommand{\R}{\mathbb{R}}
\newcommand*{\Grad}{\operatorname{Grad}}
\newcommand*{\Dive}{\operatorname{Div}}
\newcommand*{\grad}{\operatorname{grad}}
\DeclareMathAccent{\Circ}{\mathalpha}{operators}{"17}
\newcommand{\spt}{\operatorname{spt}}
\renewcommand{\Re}{\operatorname{\mathfrak{Re}}}
\renewcommand{\tilde}{\widetilde}
\renewcommand*{\epsilon}{\varepsilon}
\renewcommand*{\rho}{\varrho}
\author{Sascha Trostorff \& Maria Wehowski}
\theoremstyle{definition}
\newtheorem*{cond}{Conditions}
  \providecommand{\corollaryname}{Corollary}
  \providecommand{\definitionname}{Definition}
  \providecommand{\lemmaname}{Lemma}
  \providecommand{\propositionname}{Proposition}
  \providecommand{\remarkname}{Remark}
\providecommand{\theoremname}{Theorem}
\begin{document}
\makepreprinttitlepage

\author{ Sascha Trostorff \& Maria Wehowski \\ Institut f\"ur Analysis, Fachrichtung Mathematik\\ Technische Universit\"at Dresden\\ Germany\\ sascha.trostorff@tu-dresden.de\\ maria.wehowski@tu-dresden.de}

\title{Well-posedness of Non-autonomous Evolutionary Inclusions.}

\maketitle
\begin{abstract} \textbf{Abstract.} A class of non-autonomous differential
inclusions in a Hilbert space setting is considered. The well-posedness
for this class is shown by establishing the mappings involved as maximal
monotone relations. Moreover, the causality of the so established
solution operator is addressed. The results are exemplified by the
equations of thermoplasticity with time dependent coefficients and
by a non-autonomous version of the equations of viscoplasticity with
internal variables.\end{abstract}

\begin{keywords} non-autonomous, differential inclusions, maximal
monotone operators, well-posedness, causality, viscoplasticity, thermoplasticity
\end{keywords}

\begin{class} 34G25, 35F60, 37B55, 46N20 \end{class}

\newpage

\tableofcontents{} 

\newpage

\section{Introduction}

As it was pointed out in \cite{Picard}, the classical equations of
mathematical physics share a common form, namely 
\[
\partial_{0}v+Au=f,
\]
where $\partial_{0}$ denotes differentiation with respect to time
and $A:D(A)\subseteq H\to H$ is a suitable linear operator on a Hilbert
space $H$. The equation needs to be completed by a constitutive relation
linking the unknowns $u$ and $v$. We consider a certain class of
such constitutive relations, which actually occurs frequently in mathematical
physics. It can be written in the form 
\[
\partial_{0}v=\partial_{0}M_{0}u+M_{1}u,
\]
where $M_{0},M_{1}\in L(H)$ with $M_{0}$ selfadjoint and strictly
positive definite on its range and $\Re M_{1}=\frac{1}{2}(M_{1}+M_{1}^{\ast})$
is strictly positive definite on the kernel of $M_{0}$ (in \cite{Picard}
this case is called the (P)-degenerate case, since it typically occurs
for parabolic-type problem). Thus, we end up with an equation of the
form 
\begin{equation}
\left(\partial_{0}M_{0}+M_{1}+A\right)u=f,\label{eq:auto_eq}
\end{equation}
whose well-posedness was proved in \cite{Picard} in the case of a
skew-selfadjoint operator $A$. Later on the well-posedness of problems
of the form \prettyref{eq:auto_eq} was shown in the case of $A$
being a maximal monotone operator in \cite{Trostorff2012_NA,Trostorff2012_nonlin_bd}
(for the topic of maximal monotone operators we refer to the monographs
\cite{Brezis,papageogiou,showalter_book}). In \cite{Picard2013_nonauto}
a non-autonomous version of \prettyref{eq:auto_eq} was considered
in the sense that the operators $M_{0}$ and $M_{1}$ were replaced
by operator-valued functions $M_{0},M_{1}:\mathbb{R}\to L(H)$ and
the well-posedness of the corresponding evolutionary problem 
\[
\left(\partial_{0}M_{0}(\cdot)+M_{1}(\cdot)+A\right)u=f
\]
was shown in the case of a skew-selfadjoint operator $A.$ The aim
of this article is to generalize this well-posedness result to the
case of $A$ being a maximal monotone relation, i.e. providing a solution
theory for differential inclusions of the form 
\begin{equation}
(u,f)\in\partial_{0}M_{0}(\cdot)+M_{1}(\cdot)+A.\label{eq:non_auto_inc}
\end{equation}

In the literature we find several approaches to the well-posedness
of non-autonomous differential equations and inclusions and, depending
on the techniques involved, several notions of solutions. For example,
one classical approach, established in a general Banach spaces setting,
is the theory of evolution families introduced by Kato in \cite{Kato1953}
in the case of evolution equations and generalized by Crandall and
Pazy in \cite{Crandall1972} to evolution inclusions. This strategy,
which carries over the idea of semigroup theory to the case of non-autonomous
problems, requires that the differential inclusion is given as a Cauchy-problem,
i.e. an inclusion of the form 
\begin{equation}
(u(t),f(t))\in\partial_{0}+A(t),\label{eq:Cauchy}
\end{equation}
which corresponds to the case of an invertible mapping $M_{0}$ in
our setting. However, in the approach presented here, $M_{0}$ is
allowed to have a non-trivial kernel, which makes the inclusion \prettyref{eq:non_auto_inc}
to be a differential-algebraic problem, which in general may not be
accessible by the theory of semigroups or evolution families in a
straightforward way. Another approach to problems of the form \prettyref{eq:Cauchy}
is to approximate the differential inclusion by difference inclusions,
i.e. one replaces the derivative with respect to time by suitable
difference quotients. The corresponding solutions of the difference
inclusion then uniformly converge to a so-called ``weak'' solution
of \prettyref{eq:Cauchy} (see e.g. \cite{Evans1977,Pavel1981,Kobayasi1984}).
Another notion of solution of differential inclusions of the form
\prettyref{eq:Cauchy} are so-called ``integral solutions'', introduced
by Bénilan \cite{Benilan1972} for autonomous inclusions and generalized
in \cite{Kobayasi1984} to non-autonomous problems, which satisfy
a certain integral inequality. Under suitable assumptions on $A(t)$
one can show that the notion of ``weak'' solutions, i.e. the limit
of solutions of the difference inclusions, coincides with the notion
of ``integral solutions''. 

We emphasize that in all classical approaches, the operator $A$ is
time-dependent. However, looking at concrete examples, in many cases
the coefficients depending on time while the spatial differential
operator is indeed time-independent. Since the coefficients can usually
be incorporated in the operators $M_{0}$ and $M_{1}$, we are led
to assume that these operators depend on time while $A$ is time-independent.
This point of view has the advantage that the time dependent operators
are bounded and thus, we avoid the technicalities arising when dealing
with time-dependent unbounded operators, whose domain may also depend
on time. 

In our approach we consider the operator $\partial_{0}M_{0}(\cdot)+M_{1}(\cdot)+A$
on the right hand side of \prettyref{eq:non_auto_inc} as an object
in time and space. More precisely, the operators involved are defined
on an exponentially weighted $L_{2}$-space of Hilbert-space valued
functions, and we are seeking for solutions $u$ of \prettyref{eq:non_auto_inc}
in the sense that 
\[
(u,f)\in\overline{\partial_{0}M_{0}(\cdot)+M_{1}(\cdot)+A},
\]
where the closure is taken with respect to the topology on this $L_{2}$-space
of Hilbert space valued functions. This can be seen as an $L_{2}$-
analogue to the notion of weak solutions due to Brezis (see \cite[Definition 3.1]{Brezis}).
Thus, the well-posedness of \prettyref{eq:non_auto_inc} relies on
the invertibility of $\overline{\partial_{0}M_{0}(\cdot)+M_{1}(\cdot)+A}$,
which will be shown by proving that $\overline{\partial_{0}M_{0}(\cdot)+M_{1}(\cdot)+A}-c$
defines a maximal monotone operator in time and space for some $c>0$.
For doing so, we establish the time derivative $\partial_{0}$ in
an exponentially weighted $L_{2}$-space in order to obtain a normal,
boundedly invertible operator (cf. \cite{picard1989hilbert,Picard_McGhee}).
The operator $\partial_{0}M_{0}(\cdot)+M_{1}(\cdot)$ turns out to
be strictly maximal monotone as an operator in time and space and
hence, the well-posedness of \prettyref{eq:non_auto_inc} can be shown
by applying well-known perturbation results for maximal monotone relations. 

In addition to the well-posedness, we address the question of causality
(see e.g. \cite{lakshmikantham2010theory} or \cite{Waurick2013_causality}
for an alternative definition), which is a characteristic property
for processes evolving in time. Roughly speaking, causality means
that the behavior of the solution $u$ of \prettyref{eq:non_auto_inc}
should not depend on the future behavior of the given right hand side
(for the exact definition in our framework see Definition \ref{DefCausal}). 

The article is structured as follows. In Section 2 we recall the definition
of the time derivative $\partial_{0}$ and some basic facts on maximal
monotone operators in Hilbert spaces. Section 3 is devoted to the
proof of our main theorem (\prettyref{thm:sol-theory}), stating the
well-posedness result for problems of the form \prettyref{eq:non_auto_inc}
and the causality of the corresponding solution operator. In the concluding
section we apply our results to two examples from the theory of plasticity.
The first one deals with a non-autonomous version of the equations
of thermoplasticity, where the inelastic part of the strain and the
stress are coupled by an differential inclusion. In the second one
we consider the non-autonomous equations of viscoplasticity, where
the inelastic strain is given in terms of an internal variable (for
constitutive equations with internal variables we refer to the monograph
\cite{Alber_1998}).

In the following let $H$ be a complex Hilbert space with inner product
$\langle\cdot|\cdot\rangle$, assumed to be linear in the second and
conjugate linear in the first argument and we denote by $|\cdot|$
the induced norm.

\section{Preliminaries}

\subsection{The time derivative}

Following the strategy in \cite{Picard_McGhee}, we introduce the
derivative as a normal, boundedly invertible operator in an exponentially
weighted $L_{2}$-space. For the proofs of the forthcoming statements
we refer to \cite{Kalauch2011,Picard_McGhee}. For $\rho\in\mathbb{R}$
we define the space $H_{\rho,0}(\mathbb{R};H)$ as the completion
of $C_{c}^{\infty}(\mathbb{R};H)$ -- the space of arbitrarily often
differentiable functions with compact support in $\mathbb{R}$ taking
values in $H$ -- with respect to the norm induced by the inner product
\[
\langle\phi|\psi\rangle_{H_{\rho,0}(\mathbb{R};H)}\coloneqq\intop_{\mathbb{R}}\langle\phi(t)|\psi(t)\rangle e^{-2\rho t}\mbox{ d}t\quad(\phi,\psi\in C_{c}^{\infty}(\mathbb{R};H)).
\]
Note that in the case $\rho=0$, this is just the usual $L_{2}$-space
of (equivalence classes of) square integrable functions with values
in $H$, i.e. $H_{0,0}(\mathbb{R};H)=L_{2}(\mathbb{R};H).$ On the
Hilbert space $H_{\rho,0}(\mathbb{R};H)$ we define the derivative
$\partial_{0,\rho}$ as the closure of the linear operator 
\begin{align*}
C_{c}^{\infty}(\mathbb{R};H)\subseteq H_{\rho,0}(\mathbb{R};H) & \to H_{\rho,0}(\mathbb{R};H)\\
\phi & \mapsto\phi'.
\end{align*}
Then $\partial_{0,\rho}$ is a normal operator with $\partial_{0,\rho}^{\ast}=-\partial_{0,\rho}+2\rho$
and consequently $\Re\partial_{0,\rho}=\rho$. In the case $\rho=0$
this operator coincides with the usual weak derivative on $L_{2}(\mathbb{R};H)$
with domain $H^{1}(\mathbb{R};H)=W_{2}^{1}(\mathbb{R};H).$ For $\rho\ne0$
the operator $\partial_{0,\rho}$ has a bounded inverse with $\|\partial_{0,\rho}^{-1}\|_{L(H_{\rho,0}(\mathbb{R};H))}=\frac{1}{|\rho|}$
(see \cite[Corollary 2.5]{Kalauch2011}). More precisely, the inverse
is given by 
\begin{align}
\left(\partial_{0,\rho}^{-1}u\right)(t) & =\begin{cases}
\intop_{-\infty}^{t}u(s)\mbox{ d}s & \mbox{ if }\rho>0,\\
-\intop_{t}^{\infty}u(s)\mbox{ d}s & \mbox{ if }\rho<0
\end{cases}\quad(u\in H_{\rho,0}(\mathbb{R};H),\, t\in\mathbb{R}\mbox{ a.e.}).\label{eq:inverse_d0}
\end{align}
Since we are interested in the forward causal case (see Definition
\ref{DefCausal} below), throughout we may assume that $\rho>0.$
Next, we state an approximation result for elements in the domain
of $\partial_{0,\rho}.$ For this we denote by $\tau_{h}$ for $h\in\mathbb{R}$
the translation operator on $H_{\rho,0}(\mathbb{R};H)$ given by $\left(\tau_{h}u\right)(t)\coloneqq u(t+h)$
for $u\in H_{\rho,0}(\mathbb{R};H)$ and almost every $t\in\mathbb{R}.$
Moreover we define the Hilbert space $H_{\rho,1}(\mathbb{R};H)$ as
the domain of $\partial_{0,\rho}$ equipped with the inner product
\[
\langle u|v\rangle_{H_{\rho,1}(\mathbb{R};H)}\coloneqq\langle\partial_{0,\rho}u|\partial_{0,\rho}v\rangle_{H_{\rho,0}(\mathbb{R};H)}\quad(u,v\in D(\partial_{0,\rho})).
\]

\begin{prop}
\label{prop:chara_diff}Let $u\in H_{\rho,0}(\mathbb{R};H).$ Then
$u\in H_{\rho,1}(\mathbb{R};H)$ if and only if the set of difference
quotients $\left\{ \left.\frac{1}{h}\left(\tau_{h}u-u\right)\,\right|\, h\in]0,t]\right\} $
is bounded in $H_{\rho,0}(\mathbb{R};H)$ for some $t>0.$ Moreover,
for $u\in H_{\rho,1}(\mathbb{R};H)$ we have 
\[
\frac{1}{h}\left(\tau_{h}u-u\right)\to\partial_{0,\rho}u\mbox{ in }H_{\rho,0}(\mathbb{R};H)\mbox{ as }h\to0+.
\]
\end{prop}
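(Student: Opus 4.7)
The plan is to handle the ``if'' and ``only if'' directions separately and deduce the convergence statement along the way. The key computational ingredient I will use throughout is the identity $\|\tau_s f\|_{H_{\rho,0}(\mathbb{R};H)} = e^{\rho s}\|f\|_{H_{\rho,0}(\mathbb{R};H)}$ for $s\in\mathbb{R}$, which follows immediately from the substitution $t\mapsto t+s$ in the weighted integral defining the norm.

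For the ``only if'' direction, I will first treat $u\in C_c^\infty(\mathbb{R};H)$, where the fundamental theorem of calculus gives
\[
\frac{1}{h}\bigl(\tau_h u - u\bigr)(t) = \frac{1}{h}\int_0^h (\tau_s u')(t)\,\mathrm{d}s.
\]
Applying Minkowski's integral inequality together with the translation estimate above yields the quantitative bound
\[
\Bigl\|\frac{1}{h}\bigl(\tau_h u - u\bigr)\Bigr\|_{H_{\rho,0}(\mathbb{R};H)} \le \frac{e^{\rho h}-1}{\rho h}\,\|\partial_{0,\rho}u\|_{H_{\rho,0}(\mathbb{R};H)},
\]
which is uniformly bounded for $h\in(0,t]$ and which, crucially, is an estimate purely in the graph norm of $\partial_{0,\rho}$. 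Since $C_c^\infty(\mathbb{R};H)$ is by construction a core of $\partial_{0,\rho}$, the same bound extends to arbitrary $u\in H_{\rho,1}(\mathbb{R};H)$ via a routine density and three-$\epsilon$ argument, and passing $h\to 0+$ in the smooth case (where uniform continuity of $u'$ on its compact support gives pointwise and dominated convergence of the integrand to $u'(t)$) combined with the same uniform bound yields the convergence assertion $\frac{1}{h}(\tau_h u - u)\to \partial_{0,\rho}u$ in $H_{\rho,0}(\mathbb{R};H)$ for every $u\in H_{\rho,1}(\mathbb{R};H)$.

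For the ``if'' direction, assume that the family $\{\frac{1}{h}(\tau_h u - u)\mid h\in(0,t]\}$ is bounded in $H_{\rho,0}(\mathbb{R};H)$. By the Banach--Alaoglu theorem there exist a sequence $h_n\to 0+$ and some $v\in H_{\rho,0}(\mathbb{R};H)$ such that $\frac{1}{h_n}(\tau_{h_n}u-u)\rightharpoonup v$ weakly. The idea now is to identify $v$ with $\partial_{0,\rho}u$ by testing against $\phi\in C_c^\infty(\mathbb{R};H)$ and moving the difference quotient over to $\phi$ in the weighted inner product. A direct substitution gives
\[
\bigl\langle \tau_h u \,\big|\, \phi\bigr\rangle_{H_{\rho,0}(\mathbb{R};H)} = e^{2\rho h}\bigl\langle u \,\big|\, \tau_{-h}\phi\bigr\rangle_{H_{\rho,0}(\mathbb{R};H)},
\]
so that
\[
\Bigl\langle \tfrac{1}{h}(\tau_h u - u)\,\Big|\,\phi\Bigr\rangle_{H_{\rho,0}(\mathbb{R};H)} = \Bigl\langle u\,\Big|\,\tfrac{e^{2\rho h}-1}{h}\tau_{-h}\phi + \tfrac{1}{h}(\tau_{-h}\phi-\phi)\Bigr\rangle_{H_{\rho,0}(\mathbb{R};H)}.
\]
As $h\to 0+$, the bracketed tester converges in $H_{\rho,0}(\mathbb{R};H)$ to $2\rho\phi-\phi' = \partial_{0,\rho}^\ast\phi$ (using $\partial_{0,\rho}^\ast=-\partial_{0,\rho}+2\rho$ on $C_c^\infty(\mathbb{R};H)$, which is a core of $\partial_{0,\rho}^\ast$).

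Passing to the limit along $h_n$ then produces $\langle v|\phi\rangle = \langle u|\partial_{0,\rho}^\ast\phi\rangle$ for every $\phi\in C_c^\infty(\mathbb{R};H)$, and by density of the core this extends to all $\phi\in D(\partial_{0,\rho}^\ast)$. Since $\partial_{0,\rho}$ is densely defined and closed, $(\partial_{0,\rho}^\ast)^\ast=\partial_{0,\rho}$, so $u\in D(\partial_{0,\rho})=H_{\rho,1}(\mathbb{R};H)$ with $\partial_{0,\rho}u=v$, finishing the argument. The step I expect to require the most care is the ``if'' direction, specifically tracking the factor $e^{2\rho h}$ that arises from the weight when translating the test function and verifying that exactly this factor conspires with the difference quotient of $\phi$ to produce $\partial_{0,\rho}^\ast$ rather than $-\partial_{0,\rho}$ in the limit.
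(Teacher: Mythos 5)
Your proposal is correct and follows essentially the same route as the paper: a uniform bound on the difference quotients obtained from the fundamental theorem of calculus (the paper uses Jensen's inequality and Fubini where you use Minkowski's integral inequality, yielding the slightly sharper constant $\frac{e^{\rho h}-1}{\rho h}$ in place of $e^{\rho h}$), combined with convergence on the core $C_c^\infty(\mathbb{R};H)$ for the forward direction, and weak sequential compactness plus transferring the difference quotient onto the test function to identify $\langle v|\phi\rangle=\langle u|\partial_{0,\rho}^\ast\phi\rangle$ for the converse. The bookkeeping of the weight factor $e^{2\rho h}$ and the identification of the limit tester as $\partial_{0,\rho}^\ast\phi=-\phi'+2\rho\phi$ match the paper's computation exactly.
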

\begin{proof}
For $h>0$ we define the operator 
\begin{align*}
D_{h}:H_{\rho,1}(\mathbb{R};H) & \to H_{\rho,0}(\mathbb{R};H)\\
u & \mapsto\frac{1}{h}\left(\tau_{h}u-u\right).
\end{align*}
Obviously, this operator is linear and we estimate 
\begin{align*}
|D_{h}u|_{H_{\rho,0}(\mathbb{R};H)}^{2} & =\intop_{\mathbb{R}}\left|\frac{1}{h}\left(u(t+h)-u(t)\right)\right|^{2}e^{-2\rho t}\mbox{ d}t\\
 & =\intop_{\mathbb{R}}\frac{1}{h^{2}}\left|\intop_{0}^{h}\partial_{0,\rho}u(t+s)\mbox{ d}s\right|^{2}e^{-2\rho t}\mbox{ d}t\\
 & \leq\intop_{\mathbb{R}}\frac{1}{h}\intop_{0}^{h}|\partial_{0,\rho}u(t+s)|^{2}\mbox{ d}s\, e^{-2\rho t}\mbox{ d}t\\
 & =\frac{1}{h}\intop_{0}^{h}\intop_{\mathbb{R}}|\partial_{0,\rho}u(t+s)|^{2}e^{-2\rho t}\mbox{ d}t\mbox{ d}s\\
 & \leq e^{2\rho h}|u|_{H_{\rho,1}(\mathbb{R};H)}^{2}
\end{align*}
for each $u\in H_{\rho,1}(\mathbb{R};H).$ Thus, the family $\left(D_{h}\right)_{h\in]0,t]}$
is uniformly bounded in the space $L(H_{\rho,1}(\mathbb{R};H),H_{\rho,0}(\mathbb{R};H))$
for every $t>0$. Moreover, for $\phi\in C_{c}^{\infty}(\mathbb{R};H)$
we have 
\[
|D_{h}\phi-\partial_{0,\rho}\phi|_{H_{\rho,0}(\mathbb{R};H)}\to0\quad(h\to0+)
\]
by the dominated convergence theorem. Since $C_{c}^{\infty}(\mathbb{R};H)$
is dense in $H_{\rho,1}(\mathbb{R};H)$ we derive that 
\[
D_{h}u\to\partial_{0,\rho}u\mbox{ in }H_{\rho,0}(\mathbb{R};H)\mbox{ as }h\to0+,
\]
if $u\in H_{\rho,1}(\mathbb{R};H)$. Assume now that $\left\{ \left.\frac{1}{h}\left(\tau_{h}u-u\right)\,\right|\, h\in]0,t]\right\} $
is bounded in $H_{\rho,0}(\mathbb{R};H)$ for some $t>0.$ Then we
can choose a sequence $\left(h_{n}\right)_{n\in\mathbb{N}}$ in $]0,t]$
such that $h_{n}\to0$ as $n\to\infty$ and $\left(\frac{1}{h_{n}}\left(\tau_{h_{n}}u-u\right)\right)_{n\in\mathbb{N}}$
is weakly convergent. We denote its weak limit by $w\in H_{\rho,0}(\mathbb{R};H).$
Then we compute for $\phi\in C_{c}^{\infty}(\mathbb{R};H)$
\begin{align*}
\langle w|\phi\rangle_{H_{\rho,0}(\mathbb{R};H)} & =\lim_{n\to\infty}\intop_{\mathbb{R}}\frac{1}{h_{n}}\langle u(t+h_{n})-u(t)|\phi(t)\rangle e^{-2\rho t}\mbox{ d}t\\
 & =\lim_{n\to\infty}\intop_{\mathbb{R}}\left\langle u(t)\left|\frac{1}{h_{n}}\left(\phi(t-h_{n})e^{2\rho h_{n}}-\phi(t)\right)\right.\right\rangle e^{-2\rho t}\mbox{ d}t\\
 & =\intop_{\mathbb{R}}\langle u(t)|-\phi'(t)+2\rho\phi(t)\rangle e^{-2\rho t}\mbox{ d}t,
\end{align*}
by the dominated convergence theorem. Thus, we have for all $\phi\in C_{c}^{\infty}(\mathbb{R};H)$
\[
\langle w|\phi\rangle_{H_{\rho,0}(\mathbb{R};H)}=\langle u|\partial_{0,\rho}^{\ast}\phi\rangle_{H_{\rho,0}(\mathbb{R};H)}.
\]
Since $C_{c}^{\infty}(\mathbb{R};H)$ is a core for $\partial_{0,\rho}^{\ast}$
we obtain $u\in H_{\rho,1}(\mathbb{R};H).$
\end{proof}

\subsection{Maximal monotone relations}

In this section we recall some basic results on maximal monotone operators.
Instead of considering the operators as set-valued mappings, we prefer
to use the notion of binary relations. The proofs of the results can
be found for instance in the monographs \cite{Brezis,papageogiou}. 
\begin{defn}
A (binary) relation $A\subseteq H\oplus H$ is called \emph{monotone},
if for all pairs $(u,v),(x,y)\in A$ the inequality 
\[
\Re\langle u-x|v-y\rangle\geq0
\]
holds. Moreover, $A$ is called \emph{maximal monotone, }if $A$ is
monotone and there exists no proper monotone extension of $A$, i.e.
for every monotone $B\subseteq H\oplus H$ with $A\subseteq B$ it
follows that $A=B.$
\end{defn}
In order to deal with relations we fix some notation, which will be
used in the forthcoming sections.
\begin{defn}
For two relations $A,B\subseteq H\oplus H$ and $\lambda\in\mathbb{C}$
we define the relation $\lambda A+B$ by 
\[
\lambda A+B\coloneqq\left\{ (x,\lambda y+z)\,|\,(x,y)\in A,(x,z)\in B\right\} .
\]
The inverse relation $A^{-1}$ is given by 
\[
A^{-1}\coloneqq\left\{ (y,x)\,|\,(x,y)\in A\right\} .
\]
Furthermore, for a subset $M\subseteq H$ we define the \emph{pre-set}
of $M$ under $A$ by 
\[
[M]A\coloneqq\left\{ x\in H\,|\,\exists y\in M:(x,y)\in A\right\} 
\]
and the \emph{post-set}%
\footnote{These notions generalize the well-known concepts of pre-image and
image in the case of mappings. However, since it seems to be inappropriate
to speak of images in case of a relation, we choose the notions pre-
and post-set.%
} of $M$ under $A$ by 
\[
A[M]\coloneqq\left\{ y\in H\,|\,\exists x\in M:(x,y)\in A\right\} .
\]

Moreover, $A$ is called \emph{bounded,} if for every bounded set
$M\subseteq H$ the post-set $A[M]$ is bounded.
\end{defn}
In 1962 G. Minty proved the following characterization of maximal
monotonicity.
\begin{thm}[G. Minty, \cite{Minty}]
\label{thm:Minty} Let $A\subseteq H\oplus H$ be a monotone relation.
Then the following statements are equivalent:

\begin{enumerate}[(i)]

\item $A$ is maximal monotone,

\item there exists $\lambda>0$ such that%
\footnote{We indicate the identity on $H$ by $1$.%
} $(1+\lambda A)[H]=H,$ 

\item for every $\lambda>0$ we have $(1+\lambda A)[H]=H.$

\end{enumerate}
\end{thm}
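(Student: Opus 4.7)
The plan is to prove the cycle (iii) $\Rightarrow$ (ii) $\Rightarrow$ (i) $\Rightarrow$ (iii). The first implication is immediate. For (ii) $\Rightarrow$ (i), fix $\lambda > 0$ with $(1 + \lambda A)[H] = H$ and let $B \supseteq A$ be any monotone extension. For $(u, v) \in B$, the range hypothesis yields some $(u', v') \in A \subseteq B$ with $u + \lambda v = u' + \lambda v'$; substituting the resulting relation $u - u' = -\lambda(v - v')$ into the monotonicity inequality $\Re\langle u - u' \mid v - v'\rangle \geq 0$ for $B$ gives $-\lambda |v - v'|^2 \geq 0$, forcing $(u, v) = (u', v') \in A$, so $B = A$.

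For (i) $\Rightarrow$ (iii), fix $\lambda > 0$ and write $R_\lambda \coloneqq (1 + \lambda A)[H]$. For $(u, v), (u', v') \in A$ with $w \coloneqq u + \lambda v$ and $w' \coloneqq u' + \lambda v'$, the monotonicity of $A$ combined with the expansion
\[
|w - w'|^2 = |u - u'|^2 + 2\lambda \Re\langle u - u' \mid v - v' \rangle + \lambda^2 |v - v'|^2 \geq |u - u'|^2
\]
shows that the resolvent $J_\lambda \coloneqq (1 + \lambda A)^{-1}$ is single-valued and non-expansive on $R_\lambda$. From this and maximality I would verify $R_\lambda$ is closed: if $R_\lambda \ni w_n \to w$, then $u_n \coloneqq J_\lambda w_n$ is Cauchy, hence converges to some $u$, and $v_n \coloneqq \lambda^{-1}(w_n - u_n) \to v \coloneqq \lambda^{-1}(w - u)$; the inequality $\Re\langle u_n - x \mid v_n - y \rangle \geq 0$ persists in the limit for every $(x, y) \in A$, and maximality of $A$ then forces $(u, v) \in A$, giving $w \in R_\lambda$.

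The main obstacle is upgrading closedness of $R_\lambda$ to $R_\lambda = H$. I would do this by showing that $\Lambda \coloneqq \{\lambda > 0 \mid R_\lambda = H\}$ is \emph{self-propagating}: if $\lambda_0 \in \Lambda$, then $(\lambda_0/2, \infty) \subseteq \Lambda$. Indeed, the equation $w \in u + \lambda A u$ is, upon substituting $v = \lambda^{-1}(w - u)$, equivalent to the fixed-point equation
\[
u = J_{\lambda_0}\!\bigl(\tfrac{\lambda_0}{\lambda} w + \bigl(1 - \tfrac{\lambda_0}{\lambda}\bigr) u\bigr),
\]
whose right-hand side is a contraction in $u$ with constant $|1 - \lambda_0/\lambda|$, so Banach's fixed-point theorem delivers a unique solution whenever $\lambda > \lambda_0/2$. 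Iterating this halving shows that once $\Lambda$ is non-empty, $\Lambda = (0, \infty)$. The genuinely hard step is therefore producing a single $\lambda_0 \in \Lambda$: mere monotonicity does not suffice, and one must use maximality non-trivially, for instance via a finite-dimensional Galerkin approximation of $A$ (solvable by Brouwer's fixed-point theorem) followed by extraction of a weakly convergent subsequence whose weak-strong limit lies in $A$ by the demiclosedness of maximal monotone graphs. This demiclosedness property is the linchpin of the argument, and is exactly where maximality rather than bare monotonicity enters essentially.
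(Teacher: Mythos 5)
The paper itself does not prove this theorem --- it is quoted from Minty's 1962 paper --- so there is no internal proof to compare against; I can only assess your argument on its own terms. The implications (iii)$\Rightarrow$(ii)$\Rightarrow$(i) are complete and correct: the substitution $u-u'=-\lambda(v-v')$ into the monotonicity inequality for the extension $B$ is exactly the standard argument. Likewise, in (i)$\Rightarrow$(iii) the non-expansiveness of $J_\lambda$ on $R_\lambda$, the closedness of $R_\lambda$ (where maximality is used correctly to place the limit pair in $A$), and the self-propagation of $\Lambda$ via the resolvent identity and the contraction constant $\lvert 1-\lambda_0/\lambda\rvert$ are all sound; this correctly reduces everything to producing a single $\lambda_0$ with $(1+\lambda_0 A)[H]=H$.

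That last step, however, is the entire content of Minty's theorem, and you leave it as a gesture rather than a proof. Moreover, the gesture as stated does not work for the objects in question: a ``finite-dimensional Galerkin approximation of $A$'' is not well defined for a general multivalued relation $A\subseteq H\oplus H$, whose pre-set $[H]A$ need not meet any given finite-dimensional subspace at all (and the usual regularizing device, the Yosida approximation, is unavailable here, since its construction presupposes that $(1+\lambda A)^{-1}$ is everywhere defined --- precisely what is to be proved). The finite-dimensional reduction has to be organized differently: the standard rigorous versions are (a) the Debrunner--Flor lemma, where for each finite subset $\{(u_1,v_1),\dots,(u_n,v_n)\}\subseteq A$ one finds, via Brouwer/KKM applied on the convex hull of $u_1,\dots,u_n$, a point $x$ with $\Re\langle u_i-x\mid v_i+x\rangle\geq 0$ for all $i$, followed by a weak-compactness argument over the net of finite subsets and then maximality to conclude $(x,-x)\in A$; or (b) the Kirszbraun--Valentine route, extending the non-expansive map $u+v\mapsto u-v$ (defined on $R_1$) to all of $H$ and invoking maximality to identify the extended relation with $A$. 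Your appeal to ``demiclosedness'' also misplaces the weight: the concluding step uses maximality directly (a pair satisfying the monotonicity inequality against all of $A$ must belong to $A$), not a graph-closedness property. As it stands the proposal proves the easy equivalences and correctly isolates, but does not close, the hard one.
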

In order to formulate inclusions of the form \prettyref{eq:non_auto_inc}
we need to extend a relation $A\subseteq H\oplus H$ to a relation
on $H_{\rho,0}(\mathbb{R};H)$ for $\rho>0.$ This is done by setting
\begin{equation}
A_{\rho}\coloneqq\left\{ (u,v)\in H_{\rho,0}(\mathbb{R};H)\oplus H_{\rho,0}(\mathbb{R};H)\,|\,(u(t),v(t))\in A\mbox{ for almost every }t\in\mathbb{R}\right\} .\label{eq:extension}
\end{equation}
The relation $A_{\rho}$ then interchanges with the translation operator
$\tau_{h}$ for $h\in\mathbb{R}$ in the sense that 
\begin{equation}
(u,v)\in A_{\rho}\Leftrightarrow\forall h\in\mathbb{R}:(\tau_{h}u,\tau_{h}v)\in A_{\rho}.\label{eq:extension_autonomous}
\end{equation}
We recall the following result from \cite{Brezis} on extensions of
maximal monotone relations.
\begin{lem}[{\cite[Exemple 2.3.3]{Brezis}}]
\label{lem:extension_max_mon}Let $A\subseteq H\oplus H$ be maximal
monotone and $\rho\geq0$. If $(0,0)\in A$, then $A_{\rho}$ is maximal
monotone, too.
\end{lem}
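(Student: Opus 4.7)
The plan is to verify the two conditions in \prettyref{thm:Minty} for $A_{\rho}$: monotonicity and range condition $(1+\lambda A_{\rho})[H_{\rho,0}(\mathbb{R};H)]=H_{\rho,0}(\mathbb{R};H)$ for some $\lambda>0$.

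First I would dispatch monotonicity. For $(u,v),(x,y)\in A_{\rho}$, the pointwise condition $(u(t),v(t)),(x(t),y(t))\in A$ for a.e.\ $t$ together with monotonicity of $A$ yields $\Re\langle u(t)-x(t)\,|\,v(t)-y(t)\rangle\geq 0$ a.e., and multiplying by $e^{-2\rho t}$ and integrating gives $\Re\langle u-x\,|\,v-y\rangle_{H_{\rho,0}(\mathbb{R};H)}\geq 0$.

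For the range condition, fix $\lambda>0$ and $f\in H_{\rho,0}(\mathbb{R};H)$. Since $A$ is maximal monotone on $H$, \prettyref{thm:Minty} applied in $H$ gives that the resolvent $J_{\lambda}\coloneqq(1+\lambda A)^{-1}$ is a single-valued, everywhere defined, nonexpansive mapping on $H$. Define $u(t)\coloneqq J_{\lambda}(f(t))$ and $v(t)\coloneqq\frac{1}{\lambda}(f(t)-u(t))$ for a.e.\ $t\in\mathbb{R}$; by construction $(u(t),v(t))\in A$ a.e. What remains is to show $u,v\in H_{\rho,0}(\mathbb{R};H)$, so that $(u,v)\in A_{\rho}$ and $u+\lambda v=f$.

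Here is where the assumption $(0,0)\in A$ enters crucially: it forces $J_{\lambda}(0)=0$, hence by nonexpansiveness
\[
|u(t)|=|J_{\lambda}(f(t))-J_{\lambda}(0)|\leq|f(t)|\quad\text{for a.e.\ }t.
\]
Thus $u$ is square-integrable with weight $e^{-2\rho t}$ as soon as $f$ is, and then $v=\frac{1}{\lambda}(f-u)\in H_{\rho,0}(\mathbb{R};H)$ as well. Measurability of $u$ is automatic from the continuity of $J_{\lambda}$ composed with the (strongly) measurable representative of $f$. Combining these ingredients, $f\in(1+\lambda A_{\rho})[H_{\rho,0}(\mathbb{R};H)]$, so condition (ii) of \prettyref{thm:Minty} holds for $A_{\rho}$ and maximal monotonicity follows.

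The only potentially delicate point is the passage from pointwise resolvent application to a genuine element of $H_{\rho,0}(\mathbb{R};H)$; the assumption $(0,0)\in A$ is precisely what removes this obstacle by anchoring the resolvent at $0$ and yielding the pointwise bound $|u(t)|\leq|f(t)|$. Without it, $J_{\lambda}(f(t))$ could fail to be square-integrable, and indeed without some such anchor $A_{\rho}$ need not even be nonempty.
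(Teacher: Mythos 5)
Your proof is correct. Note that the paper does not prove this lemma at all: it is recalled verbatim from Br\'ezis (Exemple 2.3.3) with only a citation, so there is no in-paper argument to compare against. What you give is precisely the standard argument behind that reference: monotonicity of $A_{\rho}$ by pointwise monotonicity and integration against the weight $e^{-2\rho t}$, and surjectivity of $1+\lambda A_{\rho}$ by applying the resolvent $J_{\lambda}=(1+\lambda A)^{-1}$ pointwise, with the hypothesis $(0,0)\in A$ entering exactly where you say it does --- it pins $J_{\lambda}(0)=0$ so that nonexpansiveness yields $|J_{\lambda}(f(t))|\leq|f(t)|$ and hence membership of $u$ (and then $v=\lambda^{-1}(f-u)$) in $H_{\rho,0}(\mathbb{R};H)$. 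Your handling of strong measurability of $J_{\lambda}\circ f$ and your closing remark on why some anchoring of $A$ at the origin is indispensable are both sound.
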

If $A\subseteq H\oplus H$ is a monotone relation it follows that
$(1+\lambda A)^{-1}$ is a Lipschitz-continuous mapping with Lipschitz-constant
less than or equal to $1$ for each $\lambda>0$. Furthermore, if
$A$ is maximal monotone, the mapping $(1+\lambda A)^{-1}$ is defined
on the whole space $H$ by \prettyref{thm:Minty}. 
\begin{defn}
Let $A\subseteq H\oplus H$ be maximal monotone and $\lambda>0.$
The \emph{Yosida approximation }$A_{\lambda}$ of $A$ is defined
by 
\[
A_{\lambda}\coloneqq\lambda^{-1}\left(1-(1+\lambda A)^{-1}\right).
\]
The mapping $A_{\lambda}$ is monotone and Lipschitz-continuous with
a Lipschitz-constant less than or equal to $\lambda^{-1}$ (see \cite[Proposition 2.6]{Brezis}).
\end{defn}
We close this subsection by stating some perturbation results for
maximal monotone relations, which provide the key argument for our
proof of well-posedness of evolutionary inclusions of the form \prettyref{eq:non_auto_inc}. 
\begin{prop}
\label{prop:pert-Lip1}Let $A\subseteq H\oplus H$ be maximal monotone
and $B:H\to H$ be Lipschitz-continuous. Furthermore, let $A+B$ be
monotone. Then $A+B$ is maximal monotone.\end{prop}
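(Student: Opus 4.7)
The natural plan is to verify the range condition in Minty's theorem (\prettyref{thm:Minty}) for $A+B$: since $A+B$ is already assumed monotone, it suffices to exhibit some $\lambda>0$ with $(1+\lambda(A+B))[H]=H$. Fixing an arbitrary $f\in H$, the inclusion $f\in u+\lambda Au+\lambda Bu$ rewrites, upon isolating $A$, as
\[
u=(1+\lambda A)^{-1}(f-\lambda Bu).
\]
Here $(1+\lambda A)^{-1}$ is everywhere defined on $H$ and contractive (Lipschitz constant $\leq 1$) since $A$ itself is maximal monotone, by \prettyref{thm:Minty} applied to $A$.

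Let $L$ denote the Lipschitz constant of $B$ and consider the mapping
\[
T_{\lambda}\colon H\to H,\quad u\mapsto(1+\lambda A)^{-1}(f-\lambda Bu).
\]
A direct estimate shows $T_{\lambda}$ is Lipschitz with constant at most $\lambda L$: the affine map $u\mapsto f-\lambda Bu$ has Lipschitz constant $\lambda L$, and composing with the contraction $(1+\lambda A)^{-1}$ does not enlarge it. Choosing any $\lambda\in\,]0,1/L[$ (or $\lambda>0$ arbitrary if $L=0$) makes $T_{\lambda}$ a strict contraction on the complete Hilbert space $H$, so Banach's fixed-point theorem supplies a unique $u\in H$ with $T_{\lambda}u=u$, which is precisely a solution of $f\in(1+\lambda(A+B))u$.

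Since $f\in H$ was arbitrary, $(1+\lambda(A+B))[H]=H$ for this $\lambda$. Invoking the implication (ii)$\Rightarrow$(i) of \prettyref{thm:Minty} for the monotone relation $A+B$ yields the maximal monotonicity of $A+B$. There is really no serious obstacle here; the only point requiring care is the interpretation of the relational sum $A+B$ according to the definition given earlier, so that fixed points of $T_{\lambda}$ correspond exactly to elements of $[f](1+\lambda(A+B))$, but this is immediate from $B$ being a single-valued mapping defined on all of $H$.
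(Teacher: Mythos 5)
Your argument is correct and coincides with the paper's own proof: both reduce the claim via Minty's theorem to the range condition, rewrite $f\in(1+\lambda(A+B))u$ as the fixed-point equation $u=(1+\lambda A)^{-1}(f-\lambda Bu)$, and apply the contraction mapping theorem for $\lambda<|B|_{\mathrm{Lip}}^{-1}$, using that $(1+\lambda A)^{-1}$ is everywhere defined and nonexpansive. The only cosmetic difference is that the paper disposes of the case of constant $B$ separately at the outset, which you handle by allowing arbitrary $\lambda>0$ when $L=0$.
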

\begin{proof}
If $B$ is constant the assertion holds trivially. Assume now $B$
is not constant. By \prettyref{thm:Minty} it suffices to check that
there exists $\lambda>0$ such that $\left(1+\lambda(A+B)\right)[H]=H$.
Let%
\footnote{For a Lipschitz-continuous mapping $F:X\to Y$ between two metric
spaces $X$ and $Y$, we denote by $|F|_{\mathrm{Lip}}$ the smallest
Lipschitz-constant of $F$, i.e. 
\[
|F|_{\mathrm{Lip}}\coloneqq\inf\left\{ L>0\,|\,\forall x,y\in X:d_{Y}(F(x),F(y))\leq Ld_{X}(x,y)\right\} .
\]
} $0<\lambda<|B|_{\mathrm{Lip}}^{-1}$ and $y\in H$. Then by the contraction
mapping theorem there exists a fixed point $x\in H$ of the mapping
\[
H\ni u\mapsto(1+\lambda A)^{-1}(y-\lambda B(u)).
\]
This fixed point satisfies 
\[
(x,y)\in1+\lambda(A+B).\tag*{\qedhere}
\]
\end{proof}
\begin{cor}
\label{cor:Lipschitz_maxmon}Let $B:H\to H$ be monotone and Lipschitz-continuous.
Then $B$ is maximal monotone.\end{cor}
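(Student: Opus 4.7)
The statement is an immediate specialization of the preceding proposition, so the plan is simply to pick the right maximal monotone relation $A$ and add $B$ to it. The natural choice is the zero operator, i.e.
\[
A \coloneqq \{(u,0)\mid u\in H\}\subseteq H\oplus H.
\]
Then the proof reduces to three short verifications: (a) $A$ is maximal monotone, (b) $A+B$ is monotone, and (c) Proposition~\ref{prop:pert-Lip1} is applicable, after which the conclusion is literally $B=A+B$ is maximal monotone.

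For (a), monotonicity is trivial since $\Re\langle u-x\mid 0-0\rangle = 0$ for all $u,x\in H$. For maximality I would invoke Theorem~\ref{thm:Minty}: one has $(1+A)[H]=H$ because for any $y\in H$ the pair $(y,y)$ lies in $1+A$. For (b), since $A$ consists only of pairs with second component zero, the relation $A+B$ coincides with $B$ as a relation on $H\oplus H$, and monotonicity of $B$ is part of the hypotheses. For (c), $B\colon H\to H$ is Lipschitz-continuous by assumption, so Proposition~\ref{prop:pert-Lip1} applies and yields that $A+B=B$ is maximal monotone.

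There is no real obstacle here; everything amounts to unfolding definitions. If one preferred a self-contained argument avoiding Proposition~\ref{prop:pert-Lip1}, one would repeat its proof in the simpler situation at hand: for $0<\lambda<|B|_{\mathrm{Lip}}^{-1}$ and any $y\in H$, the map $u\mapsto y-\lambda B(u)$ is a strict contraction on $H$, hence has a unique fixed point $x$, which satisfies $(x,y)\in 1+\lambda B$. This shows $(1+\lambda B)[H]=H$ for some $\lambda>0$, and Theorem~\ref{thm:Minty} again concludes that $B$ is maximal monotone. Either way the corollary follows with no further work.
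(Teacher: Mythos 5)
Your proof is correct and takes essentially the same route as the paper, which simply invokes Proposition~\ref{prop:pert-Lip1} with $A=0$; your verifications that the zero relation is maximal monotone and that $A+B=B$ is monotone are exactly the details being left implicit there. The self-contained fixed-point variant you sketch is likewise just the proof of Proposition~\ref{prop:pert-Lip1} specialized to $A=0$, so nothing genuinely different is happening.
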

\begin{proof}
This follows from \prettyref{prop:pert-Lip1} with $A=0.$ \end{proof}
\begin{cor}[{\cite[Lemme 2.4]{Brezis}}]
\label{cor:pert_Lip} Let $A\subseteq H\oplus H$ be maximal monotone
and $B:H\to H$ be a monotone, Lipschitz-continuous mapping. Then
$A+B$ is maximal monotone.\end{cor}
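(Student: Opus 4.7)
The plan is to reduce this corollary to \prettyref{prop:pert-Lip1}, which already handles the case of a Lipschitz-continuous perturbation $B$ of a maximal monotone relation $A$ provided that the sum $A+B$ is known to be monotone. Hence the only thing left to establish is the monotonicity of $A+B$, which should follow immediately from the individual monotonicity of $A$ and of $B$.

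First I would unravel the definition of the sum relation: any two pairs in $A+B$ are of the form $(u,v+B(u))$ and $(x,w+B(x))$ with $(u,v),(x,w)\in A$. Then I would compute
\[
\Re\langle u-x\,|\,(v+B(u))-(w+B(x))\rangle = \Re\langle u-x\,|\,v-w\rangle + \Re\langle u-x\,|\,B(u)-B(x)\rangle,
\]
and observe that the first summand is nonnegative by monotonicity of $A$ and the second is nonnegative by monotonicity of $B$. Thus $A+B$ is monotone.

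Second, I would invoke \prettyref{prop:pert-Lip1} with the given maximal monotone $A$ and the Lipschitz-continuous mapping $B$: since $A+B$ is monotone by the previous step, the proposition yields that $A+B$ is maximal monotone, which is the claim.

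There is essentially no obstacle here; the argument is a one-line application of \prettyref{prop:pert-Lip1}. The only point worth mentioning is that the monotonicity hypothesis on $B$ is used precisely to ensure the monotonicity of the sum — without it, the Lipschitz perturbation result would not apply, and the failure of monotonicity of $A+B$ would need to be compensated by a smallness assumption on $|B|_{\mathrm{Lip}}$ relative to some coercivity of $A$, which is not available in the present generality.
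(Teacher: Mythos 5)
Your proposal is correct and follows exactly the paper's route: the paper's proof of this corollary is precisely the observation that the sum of two monotone relations is monotone, followed by an appeal to Proposition \ref{prop:pert-Lip1}. You merely spell out the elementary monotonicity computation that the paper leaves implicit.
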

\begin{proof}
The statement follows from \prettyref{prop:pert-Lip1} since the sum
of two monotone relations is again monotone.
\end{proof}
Let $A,B\subseteq H\oplus H$ be maximal monotone. Then, by \prettyref{cor:pert_Lip},
the relation $A+B_{\lambda}$ is maximal monotone for each $\lambda>0$
and thus, for $y\in H$ there exists a unique $x_{\lambda}\in H$
such that 
\[
(x_{\lambda},y)\in1+A+B_{\lambda},
\]
according to Minty's theorem (\prettyref{thm:Minty}). Using this
observation, one can show the following perturbation result.
\begin{prop}[{\cite[Proposition 3.1]{papageogiou}}]
\label{prop:pert} Let $A,B\subseteq H\oplus H$ be maximal monotone
with $[H]A\cap[H]B\ne\emptyset$ and $y\in H.$ Moreover, for $\lambda>0$
let $x_{\lambda}\in H$ such that $(x_{\lambda},y)\in1+A+B_{\lambda}.$
Then, there exists $x\in H$ with $(x,y)\in1+A+B$ if and only if
the family $\left(B_{\lambda}(x_{\lambda})\right)_{\lambda\in]0,\infty[}$
is bounded. \end{prop}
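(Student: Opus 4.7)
My plan is to prove both implications by exploiting the defining identity of the Yosida approximation, namely that $B_{\lambda}(x_{\lambda})=\lambda^{-1}(x_{\lambda}-J_{\lambda}x_{\lambda})$ with $J_{\lambda}\coloneqq(1+\lambda B)^{-1}$, and that $(J_{\lambda}x_{\lambda},B_{\lambda}(x_{\lambda}))\in B$. Existence of $x_{\lambda}$ itself is guaranteed by \prettyref{cor:pert_Lip} (so $A+B_{\lambda}$ is maximal monotone) together with \prettyref{thm:Minty}, and by definition there is some $w_{\lambda}\in H$ with $(x_{\lambda},w_{\lambda})\in A$ and $y=x_{\lambda}+w_{\lambda}+B_{\lambda}(x_{\lambda})$.

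For the easier direction, suppose $(x,y)\in1+A+B$, so $y=x+w+v$ with $(x,w)\in A$ and $(x,v)\in B$. Subtracting the two equations for $y$ and testing with $x_{\lambda}-x$ gives
\[
0=|x_{\lambda}-x|^{2}+\Re\langle x_{\lambda}-x\,|\, w_{\lambda}-w\rangle+\Re\langle x_{\lambda}-x\,|\, B_{\lambda}(x_{\lambda})-v\rangle .
\]
The second summand is nonnegative by monotonicity of $A$. For the third, I will split $x_{\lambda}-x=(J_{\lambda}x_{\lambda}-x)+\lambda B_{\lambda}(x_{\lambda})$; the first part pairs with $B_{\lambda}(x_{\lambda})-v$ to a nonnegative quantity since $(J_{\lambda}x_{\lambda},B_{\lambda}(x_{\lambda})),(x,v)\in B$, and the remaining term is $\lambda|B_{\lambda}(x_{\lambda})|^{2}-\lambda\Re\langle B_{\lambda}(x_{\lambda})\,|\,v\rangle$. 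Dropping nonnegative terms and applying Cauchy--Schwarz yields $|B_{\lambda}(x_{\lambda})|\leq|v|$, uniformly in $\lambda>0$.

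For the converse, set $M\coloneqq\sup_{\lambda>0}|B_{\lambda}(x_{\lambda})|<\infty$. Fixing $x_{0}\in[H]A\cap[H]B$ with corresponding $w_{0}\in A(x_{0})$, $v_{0}\in B(x_{0})$, the same type of test (now with $x_{\lambda}-x_{0}$) combined with Young's inequality yields a $\lambda$-uniform bound on $|x_{\lambda}|$. Then, to get convergence as $\lambda\to0$, I test the difference of the two equations for distinct $\lambda,\mu$ with $x_{\lambda}-x_{\mu}$; monotonicity of $A$ kills the $w$-term, and using the decomposition $x_{\lambda}-x_{\mu}=(J_{\lambda}x_{\lambda}-J_{\mu}x_{\mu})+(\lambda B_{\lambda}(x_{\lambda})-\mu B_{\mu}(x_{\mu}))$ together with monotonicity of $B$ reduces the $B$-term, after Cauchy--Schwarz, to an expression bounded by $(\lambda+\mu)M^{2}$. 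Hence
\[
|x_{\lambda}-x_{\mu}|^{2}\leq(\lambda+\mu)M^{2},
\]
so $(x_{\lambda})_{\lambda}$ is Cauchy and converges strongly to some $x\in H$.

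Finally, I pass to the limit along a sequence $\lambda_{n}\downarrow0$. From $J_{\lambda_{n}}x_{\lambda_{n}}=x_{\lambda_{n}}-\lambda_{n}B_{\lambda_{n}}(x_{\lambda_{n}})$ and boundedness of $B_{\lambda_{n}}(x_{\lambda_{n}})$ we also have $J_{\lambda_{n}}x_{\lambda_{n}}\to x$ strongly. Since $(B_{\lambda_{n}}(x_{\lambda_{n}}))_{n}$ and hence $(w_{\lambda_{n}})_{n}=(y-x_{\lambda_{n}}-B_{\lambda_{n}}(x_{\lambda_{n}}))_{n}$ are bounded, after extraction I get weak limits $B_{\lambda_{n}}(x_{\lambda_{n}})\rightharpoonup b$ and $w_{\lambda_{n}}\rightharpoonup w$ with $y=x+w+b$. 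Using maximality of $A$ and $B$ in the standard demiclosedness argument (any accumulation point of the defining inequality remains in the graph of a maximal monotone relation) one obtains $(x,w)\in A$ and $(x,b)\in B$, so $(x,y)\in1+A+B$. The main obstacle is the Cauchy estimate: arranging the decomposition of $x_{\lambda}-x_{\mu}$ into $J$- and $B_{\lambda}$-parts in such a way that monotonicity of $B$ can be invoked on the $(J_{\lambda}x_{\lambda},B_{\lambda}(x_{\lambda}))$-pairs is the crucial step that makes everything collapse to the $(\lambda+\mu)M^{2}$-bound.
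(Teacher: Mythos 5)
The paper states this proposition purely as a citation to \cite[Proposition 3.1]{papageogiou} and offers no proof of its own, so there is nothing internal to compare against; your argument is a correct and complete proof, and it is essentially the standard one from the cited literature (Br\'ezis/Hu--Papageorgiou). The key steps --- the resolvent identity $x_{\lambda}=J_{\lambda}x_{\lambda}+\lambda B_{\lambda}(x_{\lambda})$ with $(J_{\lambda}x_{\lambda},B_{\lambda}(x_{\lambda}))\in B$, the Cauchy estimate $|x_{\lambda}-x_{\mu}|^{2}\leq(\lambda+\mu)M^{2}$, and the strong--weak closedness of the graphs of the maximal monotone relations $A$ and $B$ in the limit passage --- are all correctly deployed.
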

\begin{cor}[{\cite[Proposition 1.22]{Trostorff_2011}}]
\label{cor:bounded_pert}  Let $A,B\subseteq H\oplus H$ be maximal
monotone with $[H]A\cap[H]B\ne\emptyset.$ Moreover, assume that $B$
is bounded. Then $A+B$ is maximal monotone.
\end{cor}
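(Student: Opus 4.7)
The plan is to apply Minty's theorem (\prettyref{thm:Minty}): since $A+B$ is monotone as a sum of monotone relations, it suffices to verify $(1+A+B)[H]=H$. So fix an arbitrary $y\in H$. Because the Yosida approximation $B_\lambda$ is monotone and Lipschitz-continuous, \prettyref{cor:pert_Lip} gives that $A+B_\lambda$ is maximal monotone for every $\lambda>0$, and Minty produces an $x_\lambda\in H$ with $(x_\lambda,y)\in 1+A+B_\lambda$, equivalently $(x_\lambda,\,y-x_\lambda-B_\lambda(x_\lambda))\in A$. By \prettyref{prop:pert}, the existence of $x\in H$ with $(x,y)\in 1+A+B$ is then reduced to showing that the family $(B_\lambda(x_\lambda))_{\lambda\in(0,\infty)}$ is bounded in $H$.

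The first step is to prove that $(x_\lambda)_\lambda$ itself is bounded. I would fix some $z_0\in[H]A\cap[H]B$ together with $\alpha_0,\beta_0\in H$ satisfying $(z_0,\alpha_0)\in A$ and $(z_0,\beta_0)\in B$. Testing the monotonicity of $A$ on the pairs $(x_\lambda,y-x_\lambda-B_\lambda(x_\lambda))$ and $(z_0,\alpha_0)$, and then controlling the resulting $\Re\langle x_\lambda-z_0\mid B_\lambda(x_\lambda)\rangle$ term via the monotonicity of $B$ applied at $(z_0,\beta_0)$ and at $((1+\lambda B)^{-1}x_\lambda,B_\lambda(x_\lambda))$ (using the identity $x_\lambda-(1+\lambda B)^{-1}x_\lambda=\lambda B_\lambda(x_\lambda)$), one obtains an inequality of the form
\[
|x_\lambda|^2+\lambda\,|B_\lambda(x_\lambda)|^2\leq C_1+C_2\,|x_\lambda|+\lambda\,C_3
\]
with constants depending only on $y$ and on the fixed data $z_0,\alpha_0,\beta_0$. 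A routine Young's inequality then yields $\sup_{\lambda>0}|x_\lambda|<\infty$.

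With $(x_\lambda)_\lambda$ bounded, I would split the parameter range. For $\lambda\geq 1$ the Lipschitz estimate
\[
|B_\lambda(x_\lambda)|\leq |B_\lambda(z_0)|+\lambda^{-1}|x_\lambda-z_0|\leq |\beta_0|+|x_\lambda-z_0|
\]
gives the required bound, using $|B_\lambda(z_0)|\leq|\beta_0|$. The delicate range is $\lambda\in(0,1]$, where the Lipschitz constant $\lambda^{-1}$ of $B_\lambda$ degenerates. The key observation is that $B_\lambda(x_\lambda)\in B\bigl[\{(1+\lambda B)^{-1}x_\lambda\}\bigr]$; since $(1+\lambda B)^{-1}$ is nonexpansive and sends $z_0+\lambda\beta_0$ to $z_0$, the set $\{(1+\lambda B)^{-1}x_\lambda:\lambda\in(0,1]\}$ lies in a bounded subset of $H$, and the assumed boundedness of $B$ forces the corresponding values $B_\lambda(x_\lambda)$ to lie in a bounded set as well. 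Combining the two ranges produces the uniform bound, and \prettyref{prop:pert} delivers the desired $x$.

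The main obstacle is precisely this small-$\lambda$ regime: the natural Lipschitz bound on $B_\lambda$ is useless as $\lambda\to 0+$, so one cannot directly propagate the control from $z_0$ to $x_\lambda$ through $B_\lambda$. It is the boundedness hypothesis on $B$, invoked via the resolvent $(1+\lambda B)^{-1}x_\lambda$ rather than via $B_\lambda(x_\lambda)$ itself, that closes the argument.
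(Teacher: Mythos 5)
Your overall architecture is the intended one behind the citation: reduce to Minty via \prettyref{prop:pert}, and obtain the boundedness of $\left(B_{\lambda}(x_{\lambda})\right)_{\lambda}$ for small $\lambda$ from the boundedness of the relation $B$, using that $B_{\lambda}(x_{\lambda})\in B\left[\{(1+\lambda B)^{-1}x_{\lambda}\}\right]$ and that the resolvent is nonexpansive with $(1+\lambda B)^{-1}(z_{0}+\lambda\beta_{0})=z_{0}$. That is exactly where the hypothesis on $B$ has to enter, and your treatment of the range $\lambda\in]0,1]$ is correct.

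There is, however, a gap in the large-$\lambda$ regime. The inequality you derive, $|x_{\lambda}|^{2}+\lambda|B_{\lambda}(x_{\lambda})|^{2}\leq C_{1}+C_{2}|x_{\lambda}|+\lambda C_{3}$, does \emph{not} yield $\sup_{\lambda>0}|x_{\lambda}|<\infty$: Young's inequality leaves the term $\lambda C_{3}$ untouched (it arises from $\Re\langle(1+\lambda B)^{-1}x_{\lambda}-z_{0}|\beta_{0}\rangle$ via $x_{\lambda}-(1+\lambda B)^{-1}x_{\lambda}=\lambda B_{\lambda}(x_{\lambda})$), so for $\lambda\to\infty$ you only get $|x_{\lambda}|=O(\sqrt{\lambda})$. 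Your $\lambda\geq1$ branch then rests on the unproved bound for $|x_{\lambda}-z_{0}|$. Two small repairs are available. Either avoid the detour through $B$ and the resolvent in this step: since $B_{\lambda}$ is itself monotone and single-valued with $|B_{\lambda}(z_{0})|\leq|\beta_{0}|$, one has $-\Re\langle x_{\lambda}-z_{0}|B_{\lambda}(x_{\lambda})\rangle\leq-\Re\langle x_{\lambda}-z_{0}|B_{\lambda}(z_{0})\rangle\leq|x_{\lambda}-z_{0}||\beta_{0}|$, which gives $|x_{\lambda}|^{2}\leq C_{1}+C_{2}|x_{\lambda}|$ uniformly in $\lambda>0$ and hence the uniform bound you claimed. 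Or keep your inequality and simply read off from it that $\lambda|B_{\lambda}(x_{\lambda})|^{2}\leq C_{1}+\tfrac{1}{4}C_{2}^{2}+\lambda C_{3}$, so $|B_{\lambda}(x_{\lambda})|^{2}\leq C_{1}+\tfrac{1}{4}C_{2}^{2}+C_{3}$ for $\lambda\geq1$, which bounds the Yosida approximants there without any bound on $x_{\lambda}$. With either repair the argument closes and matches the standard proof of \cite[Proposition 1.22]{Trostorff_2011}.
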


\section{Solution theory}

In this section we provide a solution theory for differential inclusions
of the form \prettyref{eq:non_auto_inc}. More precisely we show that
the problem 
\begin{equation}
(u,f)\in\overline{\partial_{0,\rho}M_{0}(\mathrm{m})+M_{1}(\mathrm{m})+A_{\rho}}\label{eq:prob}
\end{equation}
is well-posed in $H_{\rho,0}(\mathbb{R};H)$ for sufficiently large
$\rho>0$ and that the corresponding solution operator 
\[
\left(\overline{\partial_{0,\rho}M_{0}(\mathrm{m})+M_{1}(\mathrm{m})+A_{\rho}}\right)^{-1}:H_{\rho,0}(\mathbb{R};H)\to H_{\rho,0}(\mathbb{R};H)
\]
is causal. Throughout let $A\subseteq H\oplus H$ be maximal monotone
with $(0,0)\in A$ and $M_{0},M_{1}:\mathbb{R}\to L(H)$ be strongly
measurable and bounded functions. Then we denote the associated multiplication
operators on $H_{\rho,0}(\mathbb{R};H)$ by $M_{0}(\mathrm{m})$ and
$M_{1}(\mathrm{m}),$ respectively, where $\mathrm{m}$ serves as
a reminder for the ``multiplication by the argument'', i.e. $M_{0}(\mathrm{m}):H_{\rho,0}(\mathbb{R};H)\to H_{\rho,0}(\mathbb{R};H)$
with $\left(M_{0}(\mathrm{m})u\right)(t)\coloneqq M_{0}(t)u(t)$ for
every $u\in H_{\rho,0}(\mathbb{R};H)$ and almost every $t\in\mathbb{R}$
and analogously for $M_{1}(\mathrm{m})$%
\footnote{Note that due to the boundedness of $M_{0}$ and $M_{1}$ the operators
$M_{0}(\mathrm{m})$ and $M_{1}(\mathrm{m})$ are bounded on $H_{\rho,0}(\mathbb{R};H)$
for each $\rho>0$ with $\|M_{0}(\mathrm{m})\|_{L(H)}\leq|M_{0}|_{\infty}$
and $\|M_{1}(\mathrm{m})\|_{L(H)}\leq|M_{1}|_{\infty}$. %
}. Following \cite{Picard2013_nonauto} we require that the pair $(M_{0},M_{1})$
satisfies the following conditions.

\begin{cond}$\,$

\begin{enumerate}[(a)]

\item $M_{0}$ is Lipschitz-continuous and for every $t\in\mathbb{R}$
the operator $M_{0}(t)$ is selfadjoint. 

\item There exists a set $N\subseteq\mathbb{R}$ of Lebesgue-measure
$0$, such that for every $x\in H$ the mapping 
\[
\mathbb{R}\setminus N\ni t\mapsto M_{0}(t)x
\]
is differentiable%
\footnote{If $H$ is separable, this assumptions already follows by the Lipschitz-continuity
of $M_{0}$.%
}. 

\item  The kernel of $M_{0}(t)$ is independent of $t\in\mathbb{R},$
i.e. $N(M_{0})\coloneqq[\{0\}]M_{0}(0)=[\{0\}]M_{0}(t)$ for all $t\in\mathbb{R}.$ 

\end{enumerate}

We denote by $\iota_{0}:N(M_{0})\to H$ the canonical embedding of
$N(M_{0})$ into $H$. Then an easy computation shows that $\iota_{0}\iota_{0}^{\ast}:H\to H$
is the orthogonal projector onto the closed subspace $N(M_{0})$ (see
e.g. \cite[Lemma 3.2]{Picard2013_fractional}). In the same way we
denote by $\iota_{1}:N(M_{0})^{\bot}\to H$ the canonical embedding
of $N(M_{0})^{\bot}=\overline{M_{0}(t)[H]}$ into $H$. 

Finally, we require that

\begin{enumerate}[(d)]

\item There exist $c_{0},c_{1}>0$ such that for all $t\in\mathbb{R}$
the operators $\iota_{1}^{\ast}M_{0}(t)\iota_{1}-c_{0}$ and $\iota_{0}^{\ast}\Re M_{1}(t)\iota_{0}-c_{1}$
are monotone in $N(M_{0})^{\bot}$ and $N(M_{0})$, respectively. 

\end{enumerate} 

\end{cond}

Note that in \cite{Picard2013_nonauto} condition (c) is not required
and (d) is replaced by a more general monotonicity constraint. However,
in order to apply perturbation results, which will be a key tool for
proving the well-posedness of \prettyref{eq:prob}, we need to impose
the constraints (c) and (d) above (compare \cite[Theorem 2.19]{Picard2013_nonauto}). 
\begin{rem}
Note that under the conditions above, the operators $\iota_{1}^{\ast}M_{0}(\mathrm{m})\iota_{1}$
and $\iota_{0}^{\ast}M_{1}(\mathrm{m})\iota_{0}$ are continuously
invertible as operators in $L(H_{\rho,0}(\mathbb{R};N(M_{0})^{\bot}))$
and $L(H_{\rho,0}(\mathbb{R};N(M_{0}))$), respectively. 
\end{rem}
We recall the following result from \cite{Picard2013_nonauto}.
\begin{lem}[{\cite[Lemma 2.1]{Picard2013_nonauto}}]
 For $t\in\mathbb{R}$ the mapping 
\begin{align*}
M_{0}'(t):H & \to H\\
x & \mapsto\begin{cases}
\left(M_{0}(\cdot)x\right)'(t) & \mbox{ if }t\in\mathbb{R}\setminus N,\\
0 & \mbox{ otherwise}
\end{cases}
\end{align*}
is a bounded linear operator with $\|M_{0}'(t)\|_{L(H)}\leq|M_{0}|_{\mathrm{Lip}}$
and thus, gives rise to a bounded multiplication operator $M_{0}'(\mathrm{m})\in L(H_{\rho,0}(\mathbb{R};H))$
for each $\rho\in]0,\infty[$. Furthermore, $M_{0}'(\mathrm{m})$
is selfadjoint. Moreover, for $u\in H_{\rho,1}(\mathbb{R};H)$ the
\emph{chain rule
\begin{equation}
\partial_{0,\rho}M_{0}(\mathrm{m})u=M_{0}(\mathrm{m})\partial_{0,\rho}u+M_{0}'(\mathrm{m})u\label{eq:chainrule}
\end{equation}
}holds.\end{lem}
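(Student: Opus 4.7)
The plan is to handle the three assertions in turn: boundedness of $M_{0}'(t)$, self-adjointness of $M_{0}'(\mathrm{m})$, and the chain rule, with the last being where the real work lies.

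First I would verify that each $M_{0}'(t)$ is linear and bounded. Linearity is immediate from the pointwise derivative definition. For $t\in\mathbb{R}\setminus N$ the Lipschitz assumption gives
\[
|M_{0}'(t)x| = \lim_{h\to 0}\left|\tfrac{1}{h}\bigl(M_{0}(t+h)-M_{0}(t)\bigr)x\right| \leq |M_{0}|_{\mathrm{Lip}}\,|x|,
\]
so $\|M_{0}'(t)\|_{L(H)}\leq |M_{0}|_{\mathrm{Lip}}$; at $t\in N$ the operator is zero by definition. Since $t\mapsto M_{0}'(t)x$ is an a.e.\ limit of the strongly measurable difference quotients, it is strongly measurable, and hence $M_{0}'(\mathrm{m})$ is a well-defined bounded multiplication operator on $H_{\rho,0}(\mathbb{R};H)$ with $\|M_{0}'(\mathrm{m})\|\leq |M_{0}|_{\mathrm{Lip}}$.

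For self-adjointness, the selfadjointness of $M_{0}(t+h)$ and $M_{0}(t)$ transfers to the difference quotient, and taking the limit $h\to 0$ in $\langle \frac{1}{h}(M_{0}(t+h)-M_{0}(t))x\mid y\rangle = \langle x\mid \frac{1}{h}(M_{0}(t+h)-M_{0}(t))y\rangle$ yields $M_{0}'(t)^{\ast}=M_{0}'(t)$ pointwise. Inserting this into the weighted inner product of $H_{\rho,0}(\mathbb{R};H)$ gives $M_{0}'(\mathrm{m})^{\ast}=M_{0}'(\mathrm{m})$.

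The chain rule is the main content. Fix $u\in H_{\rho,1}(\mathbb{R};H)$ and decompose the difference quotient
\[
D_{h}(M_{0}(\mathrm{m})u)(t) = M_{0}(t+h)\,\tfrac{1}{h}(u(t+h)-u(t)) + \tfrac{1}{h}(M_{0}(t+h)-M_{0}(t))\,u(t).
\]
For the second summand, pointwise a.e.\ convergence to $M_{0}'(t)u(t)$ holds by definition, and the integrand is dominated by $|M_{0}|_{\mathrm{Lip}}^{2}|u(t)|^{2}$, an $H_{\rho,0}$-integrable majorant; so Lebesgue's dominated convergence theorem yields convergence to $M_{0}'(\mathrm{m})u$ in $H_{\rho,0}(\mathbb{R};H)$. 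For the first summand I would insert $\pm M_{0}(t)\partial_{0,\rho}u(t)$ and estimate
\[
\|M_{0}(t+h)D_{h}u(t)-M_{0}(t)\partial_{0,\rho}u(t)\|_{H_{\rho,0}} \leq |M_{0}|_{\infty}\|D_{h}u-\partial_{0,\rho}u\|_{H_{\rho,0}} + h\,|M_{0}|_{\mathrm{Lip}}\,\|\partial_{0,\rho}u\|_{H_{\rho,0}},
\]
both terms tending to zero as $h\to 0+$ by \prettyref{prop:chara_diff} and the Lipschitz bound. Consequently the family $\{D_{h}(M_{0}(\mathrm{m})u)\}_{h\in]0,1]}$ is bounded in $H_{\rho,0}(\mathbb{R};H)$, so another application of \prettyref{prop:chara_diff} shows $M_{0}(\mathrm{m})u\in H_{\rho,1}(\mathbb{R};H)$ and identifies the limit as the claimed chain rule.

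The principal obstacle is the first summand in the chain-rule computation: only pointwise convergence of $D_{h}u$ to $\partial_{0,\rho}u$ is not enough, and one needs the $H_{\rho,0}$-convergence provided by \prettyref{prop:chara_diff} together with the Lipschitz bound on $M_{0}$ in order to absorb the moving coefficient $M_{0}(t+h)$ into the limit.
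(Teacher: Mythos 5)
Your proof is correct. Note that the paper itself gives no argument here: the lemma is imported verbatim from \cite[Lemma 2.1]{Picard2013_nonauto}, so there is no internal proof to compare against. What you have written is a sound, self-contained derivation using only the tools available in this paper, with Proposition \ref{prop:chara_diff} doing the work in both directions (identifying the limit of $D_{h}u$ and, from boundedness of the difference quotients of $M_{0}(\mathrm{m})u$, concluding membership in $H_{\rho,1}(\mathbb{R};H)$ together with the identification of $\partial_{0,\rho}M_{0}(\mathrm{m})u$). The splitting of $D_{h}(M_{0}(\mathrm{m})u)$ into $M_{0}(\cdot+h)D_{h}u$ plus the difference quotient of $M_{0}$ acting on $u$, with the uniform Lipschitz bound supplying the dominating function for the second term and absorbing the moving coefficient in the first, is exactly the natural route. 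Two cosmetic points: the dominated-convergence step really needs a majorant for the \emph{difference} to $M_{0}'(t)u(t)$, which is $4|M_{0}|_{\mathrm{Lip}}^{2}|u(t)|^{2}$ rather than $|M_{0}|_{\mathrm{Lip}}^{2}|u(t)|^{2}$ (harmless); and boundedness of the family $\{D_{h}(M_{0}(\mathrm{m})u)\}_{h\in]0,1]}$ away from $h=0$ deserves a one-line remark (it follows from $\|D_{h}\|\leq h^{-1}(e^{\rho h}+1)\|M_{0}(\mathrm{m})\|\,$), though convergence as $h\to0+$ already settles the delicate part.
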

\begin{rem}
From \prettyref{eq:chainrule} we derive 
\begin{align}
\partial_{0,\rho}\left(\iota_{1}^{\ast}M_{0}(\mathrm{m})\iota_{1}\right)u & =\iota_{1}^{\ast}\partial_{0,\rho}M_{0}(\mathrm{m})\iota_{1}u\nonumber \\
 & =\iota_{1}^{\ast}M_{0}(\mathrm{m})\partial_{0,\rho}\iota_{1}u+\iota_{1}^{\ast}M_{0}'(\mathrm{m})\iota_{1}u\nonumber \\
 & =\left(\iota_{1}^{\ast}M_{0}(\mathrm{m})\iota_{1}\right)\partial_{0,\rho}u+\left(\iota_{1}^{\ast}M_{0}'(\mathrm{m})\iota_{1}\right)u\label{eq:chainrule_modified}
\end{align}
for $u\in H_{\rho,1}(\mathbb{R};N(M_{0})^{\bot}).$
\end{rem}
In the following two subsections we prove our main theorem.
\begin{thm}[Solution Theory]
\label{thm:sol-theory} Let $(M_{0},M_{1})$ be a pair of $L(H)$-valued
strongly measurable functions satisfying (a)-(d). Moreover, let $A\subseteq H\oplus H$
be a maximal monotone relation with $(0,0)\in A.$ Then there exists
$\rho_{0}>0$ such that for every $\rho\geq\rho_{0}$
\[
\left(\overline{\partial_{0,\rho}M_{0}(\mathrm{m})+M_{1}(\mathrm{m})+A_{\rho}}\right)^{-1}:H_{\rho,0}(\mathbb{R};H)\to H_{\rho,0}(\mathbb{R};H)
\]
is a Lipschitz-continuous, causal mapping. Moreover, the mapping is
independent of $\rho$ in the sense that, for $\nu,\rho\geq\rho_{0}$
and $f\in H_{\rho,0}(\mathbb{R};H)\cap H_{\nu,0}(\mathbb{R};H)$ 
\[
\left(\overline{\partial_{0,\rho}M_{0}(\mathrm{m})+M_{1}(\mathrm{m})+A_{\rho}}\right)^{-1}(f)=\left(\overline{\partial_{0,\nu}M_{0}(\mathrm{m})+M_{1}(\mathrm{m})+A_{\nu}}\right)^{-1}(f)
\]
as functions in $L_{2,\mathrm{loc}}(\mathbb{R};H).$
\end{thm}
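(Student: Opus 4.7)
The plan is to show that there exists $\rho_{0} > 0$ such that for every $\rho \geq \rho_{0}$ one has a constant $c > 0$ with $\overline{\partial_{0,\rho}M_{0}(\mathrm{m}) + M_{1}(\mathrm{m}) + A_{\rho}} - c$ maximal monotone on $H_{\rho,0}(\mathbb{R};H)$. Once this is in hand, Minty's theorem (\prettyref{thm:Minty}) applied with parameter $1/c$ delivers surjectivity of the unshifted closure together with a Lipschitz inverse of norm at most $1/c$. I would split the argument into an ``unperturbed'' part treating $B := \partial_{0,\rho}M_{0}(\mathrm{m}) + M_{1}(\mathrm{m})$ alone, and a ``perturbation'' part reintroducing $A_{\rho}$.

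For the monotonicity of $B$ on $u \in H_{\rho,1}(\mathbb{R};H)$, I would combine the chain rule \prettyref{eq:chainrule}, the self-adjointness of $M_{0}(\mathrm{m})$, and the identity $\partial_{0,\rho}^{\ast} = -\partial_{0,\rho} + 2\rho$ to derive
\[
\Re\langle u|Bu\rangle_{H_{\rho,0}(\mathbb{R};H)} = \rho\langle u|M_{0}(\mathrm{m})u\rangle + \tfrac{1}{2}\langle u|M_{0}'(\mathrm{m})u\rangle + \Re\langle u|M_{1}(\mathrm{m})u\rangle.
\]
The orthogonal decomposition $u = \iota_{0}\iota_{0}^{\ast}u + \iota_{1}\iota_{1}^{\ast}u$, together with condition (c) (which yields $M_{0}(\mathrm{m})\iota_{0} = 0$ and, by differentiation, $M_{0}'(\mathrm{m})\iota_{0} = 0$) and condition (d), gives a lower bound of $\rho c_{0}|\iota_{1}^{\ast}u|^{2} + c_{1}|\iota_{0}^{\ast}u|^{2}$ modulo off-diagonal $M_{1}$-cross-terms and the $M_{0}'(\mathrm{m})$-contribution on $N(M_{0})^{\bot}$. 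Both remainders are absorbed via Young's inequality and the bound $\|M_{0}'(\mathrm{m})\| \leq |M_{0}|_{\mathrm{Lip}}$, provided $\rho$ is chosen large compared to $|M_{0}|_{\mathrm{Lip}}$, $|M_{1}|_{\infty}$, $c_{0}^{-1}$, $c_{1}^{-1}$. This yields $\Re\langle u|Bu\rangle \geq c|u|^{2}$ and hence monotonicity of $B - c$ on its smooth core. To upgrade this to maximal monotonicity of $\overline{B - c}$ I would show that the range of $1 + (B - c)$ is closed (immediate from coercivity) and dense: the density step uses a duality argument, in which any element annihilating the range must lie in the kernel of $1 + \overline{B - c}^{\ast}$ and is shown to vanish via an analogous estimate on the adjoint, built again from the chain rule and the block structure.

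The perturbation step then re-introduces $A_{\rho}$, which is maximal monotone by \prettyref{lem:extension_max_mon}. For the Yosida approximation $A_{\lambda}$ the multiplication extension $(A_{\lambda})_{\rho}$ is Lipschitz and monotone on $H_{\rho,0}(\mathbb{R};H)$, so \prettyref{cor:pert_Lip} yields that $\overline{B - c} + (A_{\lambda})_{\rho}$ is maximal monotone; Minty then produces for every right-hand side $f$ a unique $u_{\lambda}$ solving the approximate inclusion, and coercivity supplies a uniform bound on $|u_{\lambda}|$. Passing to $\lambda \to 0+$ to obtain a solution for $A_{\rho}$ itself is carried out via \prettyref{prop:pert}, whose hypothesis is uniform boundedness of $(A_{\rho})_{\lambda}(u_{\lambda})$; establishing this uniform bound, using the coercivity of $B - c$ in tandem with the monotonicity of $A_{\rho}$, is the central technical difficulty, since standard sum-of-maximal-monotone theorems do not cover the unbounded skew-type operator $\partial_{0,\rho}M_{0}(\mathrm{m})$ directly. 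Causality is then immediate from the translation invariance \prettyref{eq:extension_autonomous} of $A_{\rho}$, the commutation of $\tau_{h}$ with $M_{i}(\mathrm{m})$ and with $\partial_{0,\rho}$, together with the forward-in-time formula for $\partial_{0,\rho}^{-1}$ in \prettyref{eq:inverse_d0}; the $\rho$-independence follows from uniqueness applied on the common subspace $H_{\rho,0}(\mathbb{R};H) \cap H_{\nu,0}(\mathbb{R};H)$, after embedding both candidate solutions into a suitable larger weighted space.
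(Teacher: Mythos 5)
Your outline of the monotonicity/coercivity estimate for $B=\partial_{0,\rho}M_{0}(\mathrm{m})+M_{1}(\mathrm{m})$ and of the maximality of $\overline{B-c}$ via an adjoint argument matches the paper (Lemma \ref{lem:max_mon_material-1}), and your reduction of the whole theorem to maximal monotonicity of the shifted closure is the right frame. But the proof has a genuine gap exactly where you flag ``the central technical difficulty'': you never supply the argument that the family $\left(A_{\rho,\lambda}(u_{\lambda})\right)_{\lambda>0}$ is bounded, and without it \prettyref{prop:pert} cannot be invoked, so existence is not established. Naming the difficulty is not the same as resolving it. The paper's resolution is a specific structural trick that your proposal does not contain: one first proves existence only for the \emph{substitute problem} in which $M_{1}(\mathrm{m})$ is specialized to $\delta-M_{0}'(\mathrm{m})$. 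For that choice, applying $\partial_{0,\rho}$ to the approximate equation and using the chain rule \prettyref{eq:chainrule} shows that $\partial_{0,\rho}u_{\lambda}$ itself solves $\left(\partial_{0,\rho}M_{0}(\mathrm{m})+\delta+B_{\lambda}\right)(\partial_{0,\rho}u_{\lambda})=\partial_{0,\rho}f$ with $B_{\lambda}(v)=\partial_{0,\rho}A_{\rho,\lambda}(\partial_{0,\rho}^{-1}v)$, which is monotone because $A_{\rho,\lambda}$ commutes with translations; this yields $|\partial_{0,\rho}u_{\lambda}|\leq\tilde{c}^{-1}|\partial_{0,\rho}f|$ uniformly in $\lambda$ for smooth $f$, and hence the required bound on $A_{\rho,\lambda}(u_{\lambda})$ directly from the equation. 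This step in turn needs the separate regularity result (Lemma \ref{lem:diff_sol}) that $u_{\lambda}\in H_{\rho,1}(\mathbb{R};H)$, whose kernel component $\iota_{0}^{\ast}u_{\lambda}$ requires its own difference-quotient argument via \prettyref{prop:chara_diff}. Only afterwards is the general $M_{1}$ recovered, by writing $\overline{\partial_{0,\rho}M_{0}(\mathrm{m})+M_{1}(\mathrm{m})+A_{\rho}}-\tilde{c}$ as the maximal monotone substitute relation plus the Lipschitz perturbation $M_{1}(\mathrm{m})+M_{0}'(\mathrm{m})-2\tilde{c}$ and applying \prettyref{prop:pert-Lip1}. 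Your plan attempts the Yosida limit directly for general $M_{1}$, where no such a priori bound is available.

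Two smaller points. First, causality is not ``immediate from translation invariance'' and the formula \prettyref{eq:inverse_d0}: the problem is non-autonomous and nonlinear, so no convolution or Paley--Wiener argument applies. The paper instead uses the half-line coercivity estimate of Lemma \ref{lem:max_mon_material-1} (the version with $\int_{-\infty}^{a}$), together with the fact that the monotonicity of $A_{\rho}$ localizes to $]-\infty,a]$ because $A_{\rho}$ acts pointwise in time; this gives $\tilde{c}\int_{-\infty}^{a}|u-x|^{2}e^{-2\rho t}\,\mathrm{d}t\leq\Re\int_{-\infty}^{a}\langle f-g|u-x\rangle e^{-2\rho t}\,\mathrm{d}t$, from which causality follows. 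Second, the $\rho$-independence does not follow from uniqueness alone: one needs causality plus a support argument (Lemma \ref{lem:embeddings}) to see that the solution for compactly-left-supported data lies in both weighted spaces with the same distributional derivative, and then for general $M_{1}$ a fixed-point transfer argument (\prettyref{prop:abstract}), since the independence is first only known for the substitute problem.
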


\subsection{Well-posedness}

We begin with characterizing the elements belonging to the domain
of $\partial_{0,\rho}M_{0}(\mathrm{m})$.
\begin{lem}
\label{lem:char_dom}Let $\rho>0$. Then an element $u\in H_{\rho,0}(\mathbb{R};H)$
belongs to $D(\partial_{0,\rho}M_{0}(\mathrm{m}))$ if and only if
$\iota_{1}^{\ast}u\in H_{\rho,1}(\mathbb{R};N(M_{0})^{\bot})$.\end{lem}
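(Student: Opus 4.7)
The plan is to reduce the characterization to a statement purely about the $N(M_0)^\perp$-component, where the restriction of $M_0(\mathrm{m})$ is invertible, and then to use the chain rule \eqref{eq:chainrule_modified} together with the difference-quotient characterization from \prettyref{prop:chara_diff}.

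First I would use that $M_0(t)$ is selfadjoint with kernel $N(M_0)$ independent of $t$ to decompose
\[
M_0(\mathrm{m})u \;=\; \iota_1\iota_1^{\ast}\,M_0(\mathrm{m})\,\iota_1\iota_1^{\ast}u \;=\; \iota_1\bigl(\iota_1^{\ast}M_0(\mathrm{m})\iota_1\bigr)\iota_1^{\ast}u,
\]
exploiting $M_0(\mathrm{m})\iota_0=0$ (kernel) and $M_0(\mathrm{m})[H_{\rho,0}]\subseteq \iota_1[H_{\rho,0}(\mathbb{R};N(M_0)^{\bot})]$ (range). Writing $\tilde{M}_0(\mathrm{m})\coloneqq\iota_1^{\ast}M_0(\mathrm{m})\iota_1$, the statement $u\in D(\partial_{0,\rho}M_0(\mathrm{m}))$, i.e.\ $M_0(\mathrm{m})u\in H_{\rho,1}(\mathbb{R};H)$, is equivalent via the isometric embedding $\iota_1$ to $\tilde{M}_0(\mathrm{m})\iota_1^{\ast}u\in H_{\rho,1}(\mathbb{R};N(M_0)^{\bot})$. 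Hence it suffices to prove: for $v\in H_{\rho,0}(\mathbb{R};N(M_0)^{\bot})$, one has $\tilde{M}_0(\mathrm{m})v\in H_{\rho,1}(\mathbb{R};N(M_0)^{\bot})$ if and only if $v\in H_{\rho,1}(\mathbb{R};N(M_0)^{\bot})$.

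The forward implication (from $v\in H_{\rho,1}$ to $\tilde{M}_0(\mathrm{m})v\in H_{\rho,1}$) is then immediate from the chain rule \eqref{eq:chainrule_modified}, which directly yields $\tilde{M}_0(\mathrm{m})v\in D(\partial_{0,\rho})$ together with an explicit expression for its derivative. The main obstacle is the converse, because $\tilde{M}_0(\mathrm{m})$ does not commute with the translation $\tau_h$, so one cannot simply invert and apply the chain rule again without first knowing that $v\in H_{\rho,1}$. Here I would exploit the difference-quotient criterion of \prettyref{prop:chara_diff}: for $h\in(0,1]$ the identity
\[
\tilde{M}_0(\mathrm{m})\frac{1}{h}(\tau_h v-v)(t) \;=\; \frac{1}{h}\bigl(\tau_h(\tilde{M}_0(\mathrm{m})v)-\tilde{M}_0(\mathrm{m})v\bigr)(t)\;-\;\frac{1}{h}\bigl(\tilde{M}_0(t+h)-\tilde{M}_0(t)\bigr)v(t+h)
\]
holds pointwise. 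Assuming $\tilde{M}_0(\mathrm{m})v\in H_{\rho,1}$, the first term on the right is bounded in $H_{\rho,0}$ uniformly in $h\in(0,1]$ by \prettyref{prop:chara_diff}, while the second is bounded using the Lipschitz-continuity of $M_0$ together with the elementary estimate $\|\tau_h\|_{L(H_{\rho,0})}\le e^{\rho h}$. Thus $\bigl(\tilde{M}_0(\mathrm{m})\tfrac{1}{h}(\tau_h v-v)\bigr)_{h\in(0,1]}$ is bounded in $H_{\rho,0}(\mathbb{R};N(M_0)^{\bot})$. Since $\tilde{M}_0(\mathrm{m})$ is continuously invertible by the Remark following condition (d), the family $\bigl(\tfrac{1}{h}(\tau_h v-v)\bigr)_{h\in(0,1]}$ is bounded in $H_{\rho,0}$, and applying \prettyref{prop:chara_diff} once more gives $v\in H_{\rho,1}(\mathbb{R};N(M_0)^{\bot})$, which completes the proof.
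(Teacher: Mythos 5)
Your argument is correct, and for the nontrivial direction it takes a genuinely different route from the paper. Both proofs reduce matters to the invertible block $\iota_{1}^{\ast}M_{0}(\mathrm{m})\iota_{1}$ and both obtain the easy implication (from $\iota_{1}^{\ast}u\in H_{\rho,1}$ to $M_{0}(\mathrm{m})u\in H_{\rho,1}$) from the chain rule \prettyref{eq:chainrule_modified}. For the converse, however, the paper works by duality: it tests $\iota_{1}^{\ast}u$ against $\partial_{0,\rho}\phi$ for $\phi\in C_{c}^{\infty}(\mathbb{R};N(M_{0})^{\bot})$, shifts $\left(\iota_{1}^{\ast}M_{0}(\mathrm{m})\iota_{1}\right)^{-1}$ onto the test function, applies the product rule there, and concludes $\iota_{1}^{\ast}u\in D(\partial_{0,\rho}^{\ast})=D(\partial_{0,\rho})$ by density of the test functions in $H_{\rho,1}$. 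You instead use the difference-quotient criterion of \prettyref{prop:chara_diff} together with the commutator identity
\[
\iota_{1}^{\ast}M_{0}(\mathrm{m})\iota_{1}\,\tfrac{1}{h}(\tau_{h}v-v)=\tfrac{1}{h}\bigl(\tau_{h}(\iota_{1}^{\ast}M_{0}(\mathrm{m})\iota_{1}v)-\iota_{1}^{\ast}M_{0}(\mathrm{m})\iota_{1}v\bigr)-\tfrac{1}{h}\bigl(\iota_{1}^{\ast}M_{0}(\cdot+h)\iota_{1}-\iota_{1}^{\ast}M_{0}(\cdot)\iota_{1}\bigr)\tau_{h}v,
\]
whose last term is controlled by $|M_{0}|_{\mathrm{Lip}}\,e^{\rho h}|v|_{H_{\rho,0}}$, and then you invert $\iota_{1}^{\ast}M_{0}(\mathrm{m})\iota_{1}$ using condition (d). Both routes need the invertibility of this block and the Lipschitz continuity of $M_{0}$; your converse direction has the minor advantage of using only the Lipschitz bound on the difference quotients of $M_{0}$ (not the pointwise differentiability of $t\mapsto M_{0}(t)x$ entering through $M_{0}'(\mathrm{m})$), and it stays entirely on the level of norm estimates, whereas the paper's adjoint computation is shorter once the product rule for the inverse is accepted. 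Your initial reduction via $M_{0}(\mathrm{m})u=\iota_{1}\bigl(\iota_{1}^{\ast}M_{0}(\mathrm{m})\iota_{1}\bigr)\iota_{1}^{\ast}u$ is sound, since $M_{0}(t)$ is selfadjoint with time-independent kernel, and $\iota_{1}$ intertwines the derivatives on $H_{\rho,0}(\mathbb{R};N(M_{0})^{\bot})$ and $H_{\rho,0}(\mathbb{R};H)$.
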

\begin{proof}
Let $u\in H_{\rho,0}(\mathbb{R};H).$ First we assume that $u\in D(\partial_{0,\rho}M_{0}(\mathrm{m})).$
Then for $\phi\in C_{c}^{\infty}(\mathbb{R};N(M_{0})^{\bot})$ we
compute, using the continuous invertibility of $\iota_{1}^{\ast}M_{0}(\mathrm{m})\iota_{1}$
and the chain rule \prettyref{eq:chainrule_modified} 
\begin{align*}
 & \langle\iota_{1}^{\ast}u|\partial_{0,\rho}\phi\rangle_{H_{\rho,0}(\mathbb{R};N(M_{0})^{\bot})}\\
 & =\left\langle \left(\iota_{1}^{\ast}M_{0}(\mathrm{m})\iota_{1}\right)^{-1}\iota_{1}^{\ast}M_{0}(\mathrm{m})\iota_{1}\iota_{1}^{\ast}u\left|\partial_{0,\rho}\phi\right.\right\rangle _{H_{\rho,0}(\mathbb{R};N(M_{0})^{\bot})}\\
 & =\left\langle \iota_{1}^{\ast}M_{0}(\mathrm{m})u\left|\left(\iota_{1}^{\ast}M_{0}(\mathrm{m})\iota_{1}\right)^{-1}\partial_{0,\rho}\phi\right.\right\rangle _{H_{\rho,0}(\mathbb{R};N(M_{0})^{\bot})}\\
 & =\left\langle \iota_{1}^{\ast}M_{0}(\mathrm{m})u\left|\partial_{0,\rho}\left(\iota_{1}^{\ast}M_{0}(\mathrm{m})\iota_{1}\right)^{-1}\phi\right.\right\rangle _{H_{\rho,0}(\mathbb{R};N(M_{0})^{\bot})}\\
 & \quad+\left\langle \iota_{1}^{\ast}M_{0}(\mathrm{m})u\left|\left(\iota_{1}^{\ast}M_{0}(\mathrm{m})\iota_{1}\right)^{-1}\left(\iota_{1}^{\ast}M_{0}'(\mathrm{m})\iota_{1}\right)\left(\iota_{1}^{\ast}M_{0}(\mathrm{m})\iota_{1}\right)^{-1}\phi\right.\right\rangle _{H_{\rho,0}(\mathbb{R};N(M_{0})^{\bot})}\\
 & =\left\langle \left.\left(\iota_{1}^{\ast}M_{0}(\mathrm{m})\iota_{1}\right)^{-1}\left(\partial_{0,\rho}^{\ast}\iota_{1}^{\ast}M_{0}(\mathrm{m})u+\left(\iota_{1}^{\ast}M_{0}'(\mathrm{m})\iota_{1}\right)\iota_{1}^{\ast}u\right)\right|\phi\right\rangle _{H_{\rho,0}(\mathbb{R};N(M_{0})^{\bot})}.
\end{align*}
This proves that $\iota_{1}^{\ast}u\in D(\partial_{0,\rho}^{\ast})=D(\partial_{0,\rho})$,
since $C_{c}^{\infty}(\mathbb{R};N(M_{0})^{\bot})$ is dense in $H_{\rho,1}(\mathbb{R};N(M_{0})^{\bot}).$
On the other hand if $\iota_{1}^{\ast}u\in D(\partial_{0,\rho})$
the assertion follows by the chain rule \prettyref{eq:chainrule_modified}.
\end{proof}
As it was done in \cite{Trostorff2012_NA} in the autonomous case
we prove the strict maximal monotonicity of the operator $\partial_{0,\rho}M_{0}(\mathrm{m})+M_{1}(\mathrm{m})$
for sufficiently large $\rho.$ 
\begin{lem}
\label{lem:max_mon_material-1} Let $\rho>0$ and $a\in\mathbb{R}$.
Then for each $u\in D(\partial_{0,\rho}M_{0}(\mathrm{m}))$ and $\varepsilon>0$
the estimate 
\begin{multline*}
\Re\int\limits _{-\infty}^{a}\langle(\partial_{0,\rho}M_{0}(\mathrm{m})+M_{1}(\mathrm{m}))u(t)|u(t)\rangle e^{-2\rho t}\,\mathrm{d}t\\
\geq\left(\rho c_{0}-\frac{1}{2}|M_{0}|_{\mathrm{Lip}}-|M_{1}|_{\infty}-\frac{1}{\varepsilon}|M_{1}|_{\infty}^{2}\right)\int\limits _{-\infty}^{a}|\iota_{1}^{\ast}u(t)|^{2}e^{-2\rho t}\,\mathrm{d}t+(c_{1}-\varepsilon)\int\limits _{-\infty}^{a}|\iota_{0}^{\ast}u(t)|^{2}e^{-2\rho t}\,\mathrm{d}t
\end{multline*}
holds. In particular, by letting $a$ tend to infinity, we have 
\begin{multline}
\Re\langle(\partial_{0,\rho}M_{0}(\mathrm{m})+M_{1}(\mathrm{m}))u|u\rangle_{H_{\rho,0}(\mathbb{R};H)}\\
\geq\left(\rho c_{0}-\frac{1}{2}|M_{0}|_{\mathrm{Lip}}-|M_{1}|_{\infty}-\frac{1}{\varepsilon}|M_{1}|_{\infty}^{2}\right)|\iota_{1}^{\ast}u|_{H_{\rho,0}(\mathbb{R};N(M_{0})^{\bot})}^{2}+(c_{1}-\varepsilon)|\iota_{0}^{\ast}u|_{H_{\rho,0}(\mathbb{R};N(M_{0}))}^{2}.\label{eq:pos_def_mat_law}
\end{multline}
Moreover, for each $0<\tilde{c}<c_{1}$ there exists $\rho_{0}>0$
such that for all $\rho\geq\rho_{0}$ the operator $\partial_{0,\rho}M_{0}(\mathrm{m})+M_{1}(\mathrm{m})-\tilde{c}$
is maximal monotone.\end{lem}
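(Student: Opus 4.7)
The plan is to prove the pointwise-in-$a$ coercivity by an explicit integration by parts, then deduce the maximal monotonicity from the resulting $L_{2}$-coercivity via Minty's theorem (\prettyref{thm:Minty}). Writing $v\coloneqq\iota_{1}^{\ast}u$ and $w\coloneqq\iota_{0}^{\ast}u$ so that $u=\iota_{0}w+\iota_{1}v$, \prettyref{lem:char_dom} gives $v\in H_{\rho,1}(\mathbb{R};N(M_{0})^{\bot})$. By condition (c) the kernel $N(M_{0}(t))$ is $t$-independent, so $M_{0}(t)$ annihilates the $N(M_{0})$-component of $u$ and maps into $N(M_{0})^{\bot}$; consequently $M_{0}(\mathrm{m})u=\iota_{1}(\iota_{1}^{\ast}M_{0}(\mathrm{m})\iota_{1})v$, and the chain rule \prettyref{eq:chainrule_modified} yields
\[
\iota_{1}^{\ast}\partial_{0,\rho}M_{0}(\mathrm{m})u=(\iota_{1}^{\ast}M_{0}(\mathrm{m})\iota_{1})\partial_{0,\rho}v+(\iota_{1}^{\ast}M_{0}'(\mathrm{m})\iota_{1})v.
\]

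Setting $B(t)\coloneqq\iota_{1}^{\ast}M_{0}(t)\iota_{1}$ (selfadjoint with $B(t)\geq c_{0}$), I would first take $v$ to be a smooth test function, extending later by density via \prettyref{lem:char_dom}, and differentiate $\langle B(t)v(t)|v(t)\rangle e^{-2\rho t}$ in $t$. Integrating from $-\infty$ to $a$ delivers
\[
2\Re\!\!\int_{-\infty}^{a}\!\!\langle B\partial_{0,\rho}v|v\rangle e^{-2\rho t}\,\mathrm{d}t=\langle B(a)v(a)|v(a)\rangle e^{-2\rho a}-\!\int_{-\infty}^{a}\!\!\langle B'v|v\rangle e^{-2\rho t}\,\mathrm{d}t+2\rho\!\!\int_{-\infty}^{a}\!\!\langle Bv|v\rangle e^{-2\rho t}\,\mathrm{d}t.
\]
The boundary at $a$ is $\geq 0$ by positivity of $B(a)$; adding the chain-rule contribution $\int_{-\infty}^{a}\langle B'v|v\rangle e^{-2\rho t}\,\mathrm{d}t$ coming from the displayed identity leaves a net $\tfrac12\int_{-\infty}^{a}\langle B'v|v\rangle e^{-2\rho t}\,\mathrm{d}t$, which is bounded below by $-\tfrac12|M_{0}|_{\mathrm{Lip}}\int_{-\infty}^{a}|v|^{2}e^{-2\rho t}\,\mathrm{d}t$ since $\|B'(t)\|\leq|M_{0}|_{\mathrm{Lip}}$; combined with $\rho\int\langle Bv|v\rangle e^{-2\rho t}\,\mathrm{d}t\geq \rho c_{0}\int|v|^{2}e^{-2\rho t}\,\mathrm{d}t$, this gives the coefficient $\rho c_{0}-\tfrac12|M_{0}|_{\mathrm{Lip}}$ in front of $\int_{-\infty}^{a}|v|^{2}e^{-2\rho t}\,\mathrm{d}t$. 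For the $M_{1}$-contribution I would expand $\Re\langle M_{1}(\mathrm{m})u|u\rangle$ into four pieces under $u=\iota_{0}w+\iota_{1}v$: the diagonal on $N(M_{0})$ is $\geq c_{1}|w|^{2}$ by condition (d); the diagonal on $N(M_{0})^{\bot}$ is at worst $-|M_{1}|_{\infty}|v|^{2}$; and the two cross terms sum to at least $-2|M_{1}|_{\infty}|v||w|\geq-\varepsilon|w|^{2}-\tfrac{|M_{1}|_{\infty}^{2}}{\varepsilon}|v|^{2}$ by Young's inequality. Summing the $M_{0}$- and $M_{1}$-contributions gives the truncated estimate, and letting $a\to\infty$ produces \prettyref{eq:pos_def_mat_law}.

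For the final claim, given $0<\tilde{c}<c_{1}$ I pick $\varepsilon\in(0,c_{1}-\tilde{c})$ and then $\rho_{0}$ large enough that $\rho c_{0}-\tfrac12|M_{0}|_{\mathrm{Lip}}-|M_{1}|_{\infty}-\tfrac{1}{\varepsilon}|M_{1}|_{\infty}^{2}\geq\tilde{c}$ for every $\rho\geq\rho_{0}$. Then \prettyref{eq:pos_def_mat_law} reads $\Re\langle(\partial_{0,\rho}M_{0}(\mathrm{m})+M_{1}(\mathrm{m})-\tilde{c})u|u\rangle\geq 0$, so the linear operator is monotone and closed, being the sum of the closed operator $\partial_{0,\rho}M_{0}(\mathrm{m})$ and the bounded operator $M_{1}(\mathrm{m})-\tilde{c}$. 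By \prettyref{thm:Minty} it remains to show surjectivity of $1+\partial_{0,\rho}M_{0}(\mathrm{m})+M_{1}(\mathrm{m})-\tilde{c}$; the enhanced coercivity $\Re\langle(1+\partial_{0,\rho}M_{0}(\mathrm{m})+M_{1}(\mathrm{m})-\tilde{c})u|u\rangle\geq|u|^{2}$ already provides closed range, so the crux is range density. I would secure this via the formal adjoint: using $\partial_{0,\rho}^{\ast}=-\partial_{0,\rho}+2\rho$ together with the product rule, on a common smooth core the adjoint's real part coincides with that of the operator itself (the $2\rho M_{0}(\mathrm{m})+M_{0}'(\mathrm{m})+2\Re M_{1}(\mathrm{m})$ terms compensating the sign change in $\partial_{0}$), whence the same coercivity applies to the adjoint and kills any element orthogonal to the range. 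The surjectivity step is the main obstacle, as it requires identifying a core for both the operator and its adjoint; a clean regularization alternative is to replace $M_{0}(\mathrm{m})$ by $M_{0}(\mathrm{m})+\eta$ (boundedly invertible on $H$), solve the perturbed problem by a $\partial_{0,\rho}^{-1}$-Neumann/fixed-point argument valid for sufficiently large $\rho$, and pass $\eta\to 0^{+}$ using the uniform $v$-estimate.
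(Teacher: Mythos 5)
Your coercivity estimate is, in substance, exactly the paper's argument: the same decomposition $u=\iota_{0}\iota_{0}^{\ast}u+\iota_{1}\iota_{1}^{\ast}u$, the same integration by parts of $\langle\iota_{1}^{\ast}M_{0}(t)\iota_{1}v(t)|v(t)\rangle e^{-2\rho t}$ on smooth test functions (with the nonnegative boundary term at $a$ and the net $\tfrac12\int\langle B'v|v\rangle e^{-2\rho t}\,\mathrm{d}t$ correction), the same density extension via \prettyref{lem:char_dom} and the continuity of $\chi_{]-\infty,a]}(\mathrm{m})$, and the same fourfold splitting of the $M_{1}$-term with Young's inequality. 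The choice of $\rho_{0}$ and the resulting monotonicity of $\partial_{0,\rho}M_{0}(\mathrm{m})+M_{1}(\mathrm{m})-\tilde{c}$ also coincide with the paper.

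The only step you do not close is the surjectivity, and you correctly flag it as the obstacle. The paper's device is precisely the ingredient you are missing: instead of looking for a common smooth core of the operator and its adjoint, one proves the domain inclusion $D\bigl(\left(\partial_{0,\rho}M_{0}(\mathrm{m})\right)^{\ast}\bigr)\subseteq D\bigl(\partial_{0,\rho}M_{0}(\mathrm{m})\bigr)$ directly. Indeed, for $v$ in the adjoint's domain and $\phi\in C_{c}^{\infty}(\mathbb{R};H)$ the chain rule \prettyref{eq:chainrule} gives $\langle\phi|\left(\partial_{0,\rho}M_{0}(\mathrm{m})\right)^{\ast}v\rangle=\langle\partial_{0,\rho}\phi|M_{0}(\mathrm{m})v\rangle+\langle\phi|M_{0}'(\mathrm{m})v\rangle$, and since $C_{c}^{\infty}(\mathbb{R};H)$ is a core for $\partial_{0,\rho}$ this forces $M_{0}(\mathrm{m})v\in D(\partial_{0,\rho}^{\ast})=D(\partial_{0,\rho})$, i.e. $v\in D(\partial_{0,\rho}M_{0}(\mathrm{m}))$. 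Once this inclusion is available, the elementary identity $\Re\langle T^{\ast}v|v\rangle=\Re\langle Tv|v\rangle$, valid for any $v\in D(T)\cap D(T^{\ast})$, lets you apply \prettyref{eq:pos_def_mat_law} to every element of $D\bigl(\left(\partial_{0,\rho}M_{0}(\mathrm{m})+M_{1}(\mathrm{m})\right)^{\ast}\bigr)$ (no core for the adjoint is needed), so the adjoint is injective; together with the closed range you already obtained from coercivity this yields surjectivity, and Minty's theorem finishes the proof. Your regularization fallback ($M_{0}(\mathrm{m})+\eta$, then $\eta\to0+$) is plausible but unnecessary once this inclusion is in hand, and as written it is only a sketch.
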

\begin{proof}
Let $\phi\in C_{c}^{\infty}(\mathbb{R};N(M_{0})^{\bot})$. Then we
compute
\begin{align*}
 & \Re\int\limits _{-\infty}^{a}\langle(\partial_{0,\rho}\iota_{1}^{\ast}M_{0}(\mathrm{m})\iota_{1})\phi(t)|\phi(t)\rangle e^{-2\rho t}\mbox{ d}t\\
 & =\frac{1}{2}\int\limits _{-\infty}^{a}(\langle(\partial_{0,\rho}\iota_{1}^{\ast}M_{0}(\mathrm{m})\iota_{1})\phi(t)|\phi(t)\rangle+\langle\phi(t)|(\partial_{0,\rho}\iota_{1}^{\ast}M_{0}(\mathrm{m})\iota_{1})\phi(t)\rangle)e^{-2\rho t}\mbox{ d}t\\
 & =\frac{1}{2}\int\limits _{-\infty}^{a}(\langle(\partial_{0,\rho}\iota_{1}^{\ast}M_{0}(\mathrm{m})\iota_{1})\phi(t)|\phi(t)\rangle+\langle\phi(t)|\iota_{1}^{\ast}M_{0}(t)\iota_{1}(\partial_{0,\rho}\phi)(t)+\iota_{1}^{\ast}M_{0}'(t)\iota_{1}\phi(t)\rangle)e^{-2\rho t}\mbox{ d}t\\
 & =\frac{1}{2}\int\limits _{-\infty}^{a}\langle\iota_{1}^{\ast}M_{0}(\cdot)\iota_{1}\phi(\cdot)|\phi(\cdot)\rangle'(t)e^{-2\rho t}\mbox{ d}t+\frac{1}{2}\int\limits _{-\infty}^{a}\langle\phi(t)|\iota_{1}^{\ast}M_{0}'(t)\iota_{1}\phi(t)\rangle e^{-2\rho t}\mbox{ d}t\\
 & \geq\frac{1}{2}\langle\iota_{1}^{\ast}M_{0}(a)\iota_{1}\phi(a)|\phi(a)\rangle e^{-2\rho a}+\rho\int\limits _{-\infty}^{a}\langle\iota_{1}^{\ast}M_{0}(t)\iota_{1}\phi(t)|\phi(t)\rangle e^{-2\rho t}\mbox{ d}t-\frac{1}{2}|M_{0}|_{\mathrm{Lip}}\int\limits _{-\infty}^{a}|\phi(t)|^{2}e^{-2\rho t}\mbox{ d}t\\
 & \geq(\rho c_{0}-\frac{1}{2}|M_{0}|_{\mathrm{Lip}})\int\limits _{-\infty}^{a}|\phi(t)|^{2}e^{-2\rho t}\mbox{ d}t.
\end{align*}
Since $C_{c}^{\infty}(\mathbb{R};N(M_{0})^{\bot})$ is dense in $H_{\rho,1}(\mathbb{R};N(M_{0})^{\bot})$
and%
\footnote{For a function $g\in L_{\infty}(\mathbb{R})$ we denote by $g(\mathrm{m})$
the corresponding multiplication operator on $H_{\rho,0}(\mathbb{R};H)$,
i.e. $\left(g(\mathrm{m})u\right)(t)\coloneqq g(t)u(t)$ for $u\in H_{\rho,0}(\mathbb{R};H)$
and almost every $t\in\mathbb{R}.$ %
} $\chi_{]-\infty,a]}(\mathrm{m})$ is continuous in $H_{\rho,0}(\mathbb{R};H)$
and by means of Lemma \ref{lem:char_dom} we obtain
\[
\Re\int\limits _{-\infty}^{a}\langle(\partial_{0,\rho}\iota_{1}^{\ast}M_{0}(\mathrm{m})\iota_{1})\iota_{1}^{\ast}u(t)|\iota_{1}^{\ast}u(t)\rangle e^{-2\rho t}\mbox{ d}t\geq(\rho c_{0}-\frac{1}{2}|M_{0}|_{\mathrm{Lip}})\int\limits _{-\infty}^{a}|\iota_{1}^{\ast}u(t)|^{2}e^{-2\rho t}\mbox{ d}t
\]
for all $u\in D(\partial_{0,\rho}M_{0}(\mathrm{m})).$ Moreover, we
compute
\begin{align*}
 & \Re\int\limits _{-\infty}^{a}\langle(\partial_{0,\rho}M_{0}(\mathrm{m})+M_{1}(\mathrm{m}))u(t)|u(t)\rangle e^{-2\rho t}\mbox{ d}t\\
 & =\Re\int\limits _{-\infty}^{a}\langle(\partial_{0,\rho}\iota_{1}^{\ast}M_{0}(\mathrm{m})\iota_{1})\iota_{1}^{\ast}u(t)|\iota_{1}^{\ast}u(t)\rangle e^{-2\rho t}\mbox{ d}t+\Re\int\limits _{-\infty}^{a}\langle\iota_{0}^{\ast}M_{1}(t)\iota_{0}\iota_{0}^{\ast}u(t)|\iota_{0}^{\ast}u(t)\rangle e^{-2\rho t}\mbox{ d}t\\
 & \quad+\Re\int\limits _{-\infty}^{a}\langle\iota_{1}^{\ast}M_{1}(t)\iota_{1}\iota_{1}^{\ast}u(t)|\iota_{1}^{\ast}u(t)\rangle e^{-2\rho t}\mbox{ d}t+\Re\int\limits _{-\infty}^{a}\langle\iota_{1}^{\ast}M_{1}(t)\iota_{0}\iota_{0}^{\ast}u(t)|\iota_{1}^{\ast}u(t)\rangle e^{-2\rho t}\mbox{ d}t\\
 & \quad+\Re\int\limits _{-\infty}^{a}\langle\iota_{0}^{\ast}M_{1}(t)\iota_{1}\iota_{1}^{\ast}u(t)|\iota_{0}^{\ast}u(t)\rangle e^{-2\rho t}\mbox{ d}t\\
 & \geq(\rho c_{0}-\frac{1}{2}|M_{0}|_{\mathrm{Lip}}-|M_{1}|_{\infty})\int\limits _{-\infty}^{a}|\iota_{1}^{\ast}u(t)|^{2}e^{-2\rho t}\mbox{ d}t+c_{1}\int\limits _{-\infty}^{a}|\iota_{0}^{\ast}u(t)|^{2}e^{-2\rho t}\mbox{ d}t\\
 & \quad-2|M_{1}|_{\infty}\int\limits _{-\infty}^{a}|\iota_{0}^{\ast}u(t)||\iota_{1}^{\ast}u(t)|e^{-2\rho t}\mbox{ d}t\\
 & \geq(c_{1}-\epsilon)\int\limits _{-\infty}^{a}|\iota_{0}^{\ast}u(t)|^{2}e^{-2\rho t}\mbox{ d}t+(\rho c_{0}-\frac{1}{2}|M_{0}|_{\mathrm{Lip}}-|M_{1}|_{\infty}-\frac{1}{\epsilon}|M_{1}|_{\infty}^{2})\int\limits _{-\infty}^{a}|\iota_{1}^{\ast}u(t)|^{2}e^{-2\rho t}\mbox{ d}t
\end{align*}
for all $u\in D(\partial_{0,\rho}M_{0}(\mathrm{m})).$ Let now $0<\tilde{c}<c_{1}$
and set $\rho_{0}\coloneqq\frac{1}{c_{0}}\left(\tilde{c}+\frac{1}{2}|M_{0}|_{\mathrm{Lip}}+|M_{1}|_{\infty}+\frac{1}{c_{1}-\tilde{c}}|M_{1}|_{\infty}^{2}\right).$
Then by \prettyref{eq:pos_def_mat_law}, the operator $\partial_{0,\rho}M_{0}(\mathrm{m})+M_{1}(\mathrm{m})-\tilde{c}$
is monotone for each $\rho\geq\rho_{0}.$ To show the maximal monotonicity
of $\partial_{0,\rho}M_{0}(\mathrm{m})+M_{1}(\mathrm{m})-\tilde{c}$
we need to determine the domain of $\left(\partial_{0,\rho}M_{0}(\mathrm{m})\right)^{\ast}.$
Let $v\in D\left(\left(\partial_{0,\rho}M_{0}(\mathrm{m})\right)^{\ast}\right).$
Then for each $\phi\in C_{c}^{\infty}(\mathbb{R};H)$ we compute,
using \prettyref{eq:chainrule}
\begin{align*}
\left\langle \phi\left|\left(\partial_{0,\rho}M_{0}(\mathrm{m})\right)^{\ast}v\right.\right\rangle _{H_{\rho,0}(\mathbb{R};H)} & =\langle\partial_{0,\rho}M_{0}(\mathrm{m})\phi|v\rangle_{H_{\rho,0}(\mathbb{R};H)}\\
 & =\left\langle M_{0}(\mathrm{m})\partial_{0,\rho}\phi|v\right\rangle _{H_{\rho,0}(\mathbb{R};H)}+\langle M_{0}'(\mathrm{m})\phi|v\rangle_{H_{\rho,0}(\mathbb{R};H)}\\
 & =\left\langle \partial_{0,\rho}\phi|M_{0}(\mathrm{m})v\right\rangle _{H_{\rho,0}(\mathbb{R};H)}+\langle\phi|M_{0}'(\mathrm{m})v\rangle_{H_{\rho,0}(\mathbb{R};H)},
\end{align*}
yielding that $M_{0}(\mathrm{m})v\in D(\partial_{0,\rho}^{\ast})=D(\partial_{0,\rho})$,
where we again have used that $C_{c}^{\infty}(\mathbb{R};H)$ is dense
in $H_{\rho,1}(\mathbb{R};H).$ Thus, 
\[
D\left(\left(\partial_{0,\rho}M_{0}(\mathrm{m})\right)^{\ast}\right)\subseteq D\left(\partial_{0,\rho}M_{0}(\mathrm{m})\right),
\]
which yields by \prettyref{eq:pos_def_mat_law} that $\left(\partial_{0,\rho}M_{0}(\mathrm{m})+M_{1}(\mathrm{m})\right)^{\ast}$
is injective and hence, \foreignlanguage{english}{$\partial_{0,\rho}M_{0}(\mathrm{m})+M_{1}(\mathrm{m})$}
is onto. Thus, \prettyref{thm:Minty} implies that $\partial_{0,\rho}M_{0}(\mathrm{m})+M_{1}(\mathrm{m})-\tilde{c}$
is maximal monotone.
\end{proof}
With the latter lemma, the uniqueness of a solution of \prettyref{eq:prob}
and its continuous dependence on the given right hand side for $\rho$
sufficiently large can easily be proved.
\begin{prop}
\label{prop:uniq} Let $0<\tilde{c}<c_{1}$ and $\rho>0$ sufficiently
large such that $\partial_{0,\rho}M_{0}(\mathrm{m})+M_{1}(\mathrm{m})-\tilde{c}$
is maximal monotone. Moreover, let $B\subseteq H_{\rho,0}(\mathbb{R};H)\oplus H_{\rho,0}(\mathbb{R};H)$
be monotone. Then for $(u,f),(v,g)\in\partial_{0,\rho}M_{0}(\mathrm{m})+M_{1}(\mathrm{m})+B$
the estimate 
\[
|u-v|_{H_{\rho,0}(\mathbb{R};H)}\leq\frac{1}{\tilde{c}}|f-g|_{H_{\rho,0}(\mathbb{R};H)}
\]
holds, or, in other words, the inverse relation $\left(\partial_{0,\rho}M_{0}(\mathrm{m})+M_{1}(\mathrm{m})+B\right)^{-1}$
is a Lipschitz-continuous mapping.\end{prop}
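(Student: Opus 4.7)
The plan is to unpack the sum relation, reduce to an estimate for the operator part, and then apply Cauchy--Schwarz. More precisely, by the definition of the sum of relations, the membership $(u,f),(v,g)\in\partial_{0,\rho}M_{0}(\mathrm{m})+M_{1}(\mathrm{m})+B$ means that $u,v\in D(\partial_{0,\rho}M_{0}(\mathrm{m}))$ and that there exist $b,d\in H_{\rho,0}(\mathbb{R};H)$ with $(u,b),(v,d)\in B$ such that
\[
f=(\partial_{0,\rho}M_{0}(\mathrm{m})+M_{1}(\mathrm{m}))u+b,\qquad g=(\partial_{0,\rho}M_{0}(\mathrm{m})+M_{1}(\mathrm{m}))v+d.
\]
Taking the difference and pairing with $u-v$, I would split
\[
\Re\langle f-g\,|\,u-v\rangle_{H_{\rho,0}(\mathbb{R};H)}=\Re\langle(\partial_{0,\rho}M_{0}(\mathrm{m})+M_{1}(\mathrm{m}))(u-v)\,|\,u-v\rangle_{H_{\rho,0}(\mathbb{R};H)}+\Re\langle b-d\,|\,u-v\rangle_{H_{\rho,0}(\mathbb{R};H)}.
\]

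Next I would discard the $B$-term using monotonicity of $B$, which gives $\Re\langle b-d\,|\,u-v\rangle\geq0$. For the remaining term, since $\partial_{0,\rho}M_{0}(\mathrm{m})+M_{1}(\mathrm{m})-\tilde{c}$ is assumed maximal monotone and the operator $\partial_{0,\rho}M_{0}(\mathrm{m})+M_{1}(\mathrm{m})$ is linear, monotonicity applied to the linear combination yields
\[
\Re\langle(\partial_{0,\rho}M_{0}(\mathrm{m})+M_{1}(\mathrm{m}))(u-v)\,|\,u-v\rangle_{H_{\rho,0}(\mathbb{R};H)}\geq\tilde{c}\,|u-v|_{H_{\rho,0}(\mathbb{R};H)}^{2}.
\]
Combining these two inequalities produces $\Re\langle f-g\,|\,u-v\rangle\geq\tilde{c}|u-v|^{2}$, and Cauchy--Schwarz then gives the claimed estimate $|u-v|\leq\tilde{c}^{-1}|f-g|$, whence in particular the inverse relation is single-valued and Lipschitz-continuous.

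There is no real obstacle here: all the analytic content (the strict monotonicity estimate, the characterisation of the domain, the chain rule) is already packaged in Lemma \ref{lem:max_mon_material-1}. The only point requiring a moment of care is to notice that even though $B$ may be multi-valued, the ``operator part'' $\partial_{0,\rho}M_{0}(\mathrm{m})+M_{1}(\mathrm{m})$ is single-valued and linear, so the two second coordinates associated with a given first coordinate $u$ differ only in their $B$-component, which makes the decomposition above unambiguous.
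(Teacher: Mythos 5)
Your proof is correct and follows essentially the same route as the paper: decompose $f-g$ into the linear operator part applied to $u-v$ plus the difference of the $B$-components, discard the latter by monotonicity of $B$, bound the former below by $\tilde{c}\,|u-v|^{2}$ using the monotonicity of $\partial_{0,\rho}M_{0}(\mathrm{m})+M_{1}(\mathrm{m})-\tilde{c}$ together with linearity, and finish with Cauchy--Schwarz. No issues.
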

\begin{proof}
Let $(u,f),(v,g)\in\partial_{0,\rho}M_{0}(\mathrm{m})+M_{1}(\mathrm{m})+B.$
Then 
\begin{align*}
 & \Re\langle f-g|u-v\rangle_{H_{\rho,0}(\mathbb{R};H)}\\
 & =\Re\left\langle \left.f-\left(\partial_{0,\rho}M_{0}(\mathrm{m})+M_{1}(\mathrm{m})\right)u-\left(g-\left(\partial_{0,\rho}M_{0}(\mathrm{m})+M_{1}(\mathrm{m})\right)v\right)\right|u-v\right\rangle _{H_{\rho,0}(\mathbb{R};H)}\\
 & \quad+\Re\left\langle \left.\left(\partial_{0,\rho}M_{0}(\mathrm{m})+M_{1}(\mathrm{m})\right)(u-v)\right|u-v\right\rangle _{H_{\rho,0}(\mathbb{R};H)}\\
 & \geq\tilde{c}|u-v|_{H_{\rho,0}(\mathbb{R};H)}^{2},
\end{align*}
where we have used the monotonicity of $\partial_{0,\rho}M_{0}(\mathrm{m})+M_{1}(\mathrm{m})-\tilde{c}$
and of $B$. The assertion now follows from the Cauchy-Schwarz-inequality.
\end{proof}
It is left to show that \prettyref{eq:prob} possesses a solution
$u\in H_{\rho,0}(\mathbb{R};H)$ for every right hand side $f\in H_{\rho,0}(\mathbb{R};H).$
Instead of considering the problem \prettyref{eq:prob} we study an
inclusion of the form 
\begin{equation}
(u,f)\in\overline{\partial_{0,\rho}M_{0}(\mathrm{m})-M_{0}'(\mathrm{m})+\delta+A_{\rho}},\label{eq:substitute_prob}
\end{equation}
where $\delta>0.$ This means we specify $M_{1}(m)$ to be of the
form $\delta-M_{0}'(\mathrm{m}).$ It is easy to see that the pair
$(M_{0},\delta-M_{0}')$ satisfies the conditions (a)-(d)%
\footnote{Note that $\iota_{0}^{\ast}M_{0}'(\mathrm{m})\iota_{0}=0$.%
}. We show that for $f\in H_{\rho,1}(\mathbb{R};H)$ there exists $u\in H_{\rho,0}(\mathbb{R};H)$
satisfying \prettyref{eq:substitute_prob}. For doing so, we employ
\prettyref{prop:pert} and thus, we have to consider the approximate
problem 
\begin{equation}
\left(\partial_{0,\rho}M_{0}(\mathrm{m})-M_{0}'(\mathrm{m})+\delta\right)u_{\lambda}+A_{\rho,\lambda}(u_{\lambda})=f,\label{eq:approx_prob}
\end{equation}
for each $\lambda>0$, where we denote by $A_{\rho,\lambda}$ the
Yosida approximation of $A_{\rho}$%
\footnote{One can show that $A_{\rho,\lambda}$ equals the extension of $A_{\lambda}$
given by \prettyref{eq:extension}. %
}. For each $0<\tilde{c}<\delta$ we can choose $\rho_{0}>0$ such
that $\partial_{0,\rho}M_{0}(\mathrm{m})-M_{0}'(\mathrm{m})+\delta-\tilde{c}$
gets maximal monotone for every $\rho\geq\rho_{0}$ by \prettyref{lem:max_mon_material-1}.
Thus, using \prettyref{cor:pert_Lip} we find for each $\lambda>0$
an element $u_{\lambda}\in D(\partial_{0,\rho}M_{0}(\mathrm{m}))$
satisfying \prettyref{eq:approx_prob}. 
\begin{lem}
\label{lem:diff_sol}Let $0<\tilde{c}<\delta$ and $\rho>0$ such
that $\partial_{0,\rho}M_{0}(\mathrm{m})-M_{0}'(\mathrm{m})+\delta-\tilde{c}$
is maximal monotone. Moreover, let $f\in H_{\rho,1}(\mathbb{R};H)$
and $u_{\lambda}\in D(\partial_{0,\rho}M_{0}(\mathrm{m}))$ satisfying
\prettyref{eq:approx_prob} for $\lambda>0$. Then $u_{\lambda}\in H_{\rho,1}(\mathbb{R};H)$.\end{lem}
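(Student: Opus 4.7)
The plan is to verify the difference-quotient criterion from \prettyref{prop:chara_diff}: I will show that the family $D_h u_\lambda\coloneqq\frac{1}{h}(\tau_h u_\lambda-u_\lambda)$ is bounded in $H_{\rho,0}(\mathbb{R};H)$ for $h\in{]0,1]}$, which then forces $u_\lambda\in H_{\rho,1}(\mathbb{R};H)$.

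First I would shift the equation \prettyref{eq:approx_prob} by $\tau_h$. Since $\partial_{0,\rho}$ commutes with $\tau_h$ and the autonomy of $A$ gives $\tau_h A_{\rho,\lambda}=A_{\rho,\lambda}\tau_h$ via \prettyref{eq:extension_autonomous}, only the time-dependent multipliers pick up a shift: writing $L\coloneqq \partial_{0,\rho}M_0(\mathrm{m})-M_0'(\mathrm{m})+\delta$ and $L^{(h)}$ for the analogous operator built from $t\mapsto M_0(t+h)$, the shifted equation and \prettyref{eq:approx_prob} combine to
\[
L^{(h)}v_h+\bigl(A_{\rho,\lambda}(\tau_h u_\lambda)-A_{\rho,\lambda}(u_\lambda)\bigr)=(\tau_h f-f)-(L^{(h)}-L)u_\lambda,\qquad v_h\coloneqq \tau_h u_\lambda-u_\lambda.
\]
The pair $(M_0(\cdot+h),\delta-M_0'(\cdot+h))$ satisfies Conditions (a)--(d) with the same constants as $(M_0,\delta-M_0')$, in particular by Condition (c) since $N(M_0(t+h))=N(M_0)$; hence \prettyref{lem:max_mon_material-1} supplies the lower bound $\Re\langle L^{(h)}v_h|v_h\rangle_{H_{\rho,0}(\mathbb{R};H)}\geq\tilde{c}|v_h|^2_{H_{\rho,0}(\mathbb{R};H)}$ for the same $\rho\geq\rho_0$. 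Testing the identity above with $v_h$, discarding the $A_{\rho,\lambda}$ cross term by its monotonicity, and applying Cauchy--Schwarz yields
\[
\tilde{c}\,|v_h|_{H_{\rho,0}(\mathbb{R};H)}\leq |\tau_h f-f|_{H_{\rho,0}(\mathbb{R};H)}+|(L^{(h)}-L)u_\lambda|_{H_{\rho,0}(\mathbb{R};H)}.
\]

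After division by $h$, the $f$-contribution stays bounded in $h$ by \prettyref{prop:chara_diff} applied to $f\in H_{\rho,1}(\mathbb{R};H)$, so the crux is an $O(h)$-estimate for $(L^{(h)}-L)u_\lambda$. This is the main obstacle, because $M_0'$ is only assumed bounded (not Lipschitz), so the summand $(M_0'(\mathrm{m}+h)-M_0'(\mathrm{m}))u_\lambda$ admits no direct $O(h)$-bound. The resolution is the observation that Condition (c) forces $M_0(t)\iota_0=0$ identically in $t$, and hence $M_0'(\mathrm{m})\iota_0=0=M_0'(\mathrm{m}+h)\iota_0$ almost everywhere. Using \prettyref{lem:char_dom} to ensure $\iota_1^{\ast}u_\lambda\in H_{\rho,1}(\mathbb{R};N(M_0)^\bot)$ and expanding both $\partial_{0,\rho}M_0(\mathrm{m})u_\lambda$ and $\partial_{0,\rho}M_0(\mathrm{m}+h)u_\lambda$ via the chain rule \prettyref{eq:chainrule}, the $M_0'$-terms cancel against $M_0'(\mathrm{m})u_\lambda$ and $M_0'(\mathrm{m}+h)u_\lambda$, leaving
\[
(L^{(h)}-L)u_\lambda=\bigl(M_0(\mathrm{m}+h)-M_0(\mathrm{m})\bigr)\,\partial_{0,\rho}\iota_1\iota_1^{\ast}u_\lambda,
\]
whose norm is bounded by $h\,|M_0|_{\mathrm{Lip}}\,|\partial_{0,\rho}\iota_1^{\ast}u_\lambda|_{H_{\rho,0}(\mathbb{R};N(M_0)^\bot)}$ by the Lipschitz continuity of $M_0$. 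This produces the uniform bound on $|D_h u_\lambda|_{H_{\rho,0}(\mathbb{R};H)}$ for $h\in{]0,1]}$, and \prettyref{prop:chara_diff} then delivers $u_\lambda\in H_{\rho,1}(\mathbb{R};H)$.
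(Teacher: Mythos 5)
Your argument is correct, but it takes a genuinely different route from the paper's. The paper does not estimate the full difference quotient of $u_{\lambda}$ at once: it applies $\iota_{0}^{\ast}$ to \prettyref{eq:approx_prob}, which annihilates every $M_{0}$-term and reduces the problem to $\delta\iota_{0}^{\ast}u_{\lambda}+B_{\lambda}(\iota_{0}^{\ast}u_{\lambda})=\iota_{0}^{\ast}f$ with $B_{\lambda}(v)=\iota_{0}^{\ast}A_{\rho,\lambda}(\iota_{0}v+\iota_{1}\iota_{1}^{\ast}u_{\lambda})$; since $\iota_{1}^{\ast}u_{\lambda}\in H_{\rho,1}$ is already known from \prettyref{lem:char_dom}, only $\iota_{0}^{\ast}u_{\lambda}$ needs treatment, and the translated reduced equation together with the monotonicity of $\delta+B_{\lambda}$ and the Lipschitz continuity of $A_{\rho,\lambda}$ (constant $\lambda^{-1}$) gives $\lvert\tau_{h}v-v\rvert\leq\frac{1}{\delta}\bigl(\lvert\iota_{0}^{\ast}(\tau_{h}f-f)\rvert+\frac{1}{\lambda}\lvert(\tau_{h}-1)\iota_{1}^{\ast}u_{\lambda}\rvert\bigr)$. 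You instead translate the whole equation, invoke the coercivity of the shifted material-law operator $L^{(h)}$ with the same constant $\tilde{c}$, and exploit the cancellation of the $M_{0}'$-terms via $M_{0}'(\mathrm{m})\iota_{0}=0$ and the chain rule applied to $\iota_{1}\iota_{1}^{\ast}u_{\lambda}$ — that cancellation is the genuinely nontrivial step, and it checks out. What each buys: your bound avoids the $\lambda^{-1}$ blow-up and treats both components simultaneously, at the price of needing the coercivity estimate of \prettyref{lem:max_mon_material-1} for the \emph{shifted} pair $(M_{0}(\cdot+h),\delta-M_{0}'(\cdot+h))$ with the same $\rho$ — which holds because the constants $c_{0}$, $c_{1}=\delta$, $\lvert M_{0}\rvert_{\mathrm{Lip}}$, $\lvert M_{1}\rvert_{\infty}$ are shift-invariant, but does require reading the hypothesis ``$\rho$ such that $\partial_{0,\rho}M_{0}(\mathrm{m})-M_{0}'(\mathrm{m})+\delta-\tilde{c}$ is maximal monotone'' as ``$\rho\geq\rho_{0}$ as constructed in \prettyref{lem:max_mon_material-1}'', which is how the lemma is used throughout. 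The paper's projection argument is more elementary in that it needs no property of $M_{0}$ beyond $\iota_{0}^{\ast}M_{0}(\mathrm{m})=\iota_{0}^{\ast}M_{0}'(\mathrm{m})=0$, since it only ever differentiates the component on which $M_{0}$ vanishes.
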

\begin{proof}
We decompose $u_{\lambda}$ into the orthogonal parts $\iota_{1}^{\ast}u_{\lambda}$
and $\iota_{0}^{\ast}u_{\lambda}$ lying in $H_{\rho,0}(\mathbb{R};N(M_{0})^{\bot})$
and $H_{\rho,0}(\mathbb{R};N(M_{0}))$, respectively. Since $u_{\lambda}\in D(\partial_{0,\rho}M_{0}(\mathrm{m}))$,
\prettyref{lem:char_dom} yields that $\iota_{1}^{\ast}u_{\lambda}\in D(\partial_{0,\rho}).$
Thus, it suffices to prove that also $\iota_{0}^{\ast}u_{\lambda}\in D(\partial_{0,\rho})$,
which will be shown by using \prettyref{prop:chara_diff}. We apply
$\iota_{0}^{\ast}$ on \prettyref{eq:approx_prob} and obtain the
equality 
\begin{equation}
\delta\iota_{0}^{\ast}u_{\lambda}+\iota_{0}^{\ast}A_{\rho,\lambda}(\iota_{0}\iota_{0}^{\ast}u_{\lambda}+\iota_{1}\iota_{1}^{\ast}u_{\lambda})=\iota_{0}^{\ast}f,\label{eq:subspace}
\end{equation}
since $\iota_{0}^{\ast}M_{0}(\mathrm{m})=0=\iota_{0}^{\ast}M_{0}'(\mathrm{m}).$
We define the mapping $B_{\lambda}$ on $H_{\rho,0}(\mathbb{R};N(M_{0}))$
given by 
\[
B_{\lambda}(v)=\iota_{0}^{\ast}A_{\rho,\lambda}(\iota_{0}v+\iota_{1}\iota_{1}^{\ast}u_{\lambda})\quad(v\in H_{\rho,0}(\mathbb{R};N(M_{0}))).
\]
Then \prettyref{eq:subspace} can be written as 
\begin{equation}
\delta\iota_{0}^{\ast}u_{\lambda}+B_{\lambda}(\iota_{0}^{\ast}u_{\lambda})=\iota_{0}^{\ast}f.\label{eq:subspace_2}
\end{equation}
$B_{\lambda}$ is monotone, since for $v,w\in H_{\rho,0}(\mathbb{R};N(M_{0}))$
we estimate 
\begin{align*}
 & \Re\langle B_{\lambda}(v)-B_{\lambda}(w)|v-w\rangle_{H_{\rho,0}(\mathbb{R};N(M_{0}))}\\
 & =\Re\left\langle \left.\iota_{0}^{\ast}\left(A_{\rho,\lambda}(\iota_{0}v+\iota_{1}\iota_{1}^{\ast}u_{\lambda})-A_{\rho,\lambda}(\iota_{0}w+\iota_{1}\iota_{1}^{\ast}u_{\lambda})\right)\right|v-w\right\rangle _{H_{\rho,0}(\mathbb{R};N(M_{0}))}\\
 & =\Re\left\langle A_{\rho,\lambda}(\iota_{0}v+\iota_{1}\iota_{1}^{\ast}u_{\lambda})-A_{\rho,\lambda}(\iota_{0}w+\iota_{1}\iota_{1}^{\ast}u_{\lambda})|\iota_{0}v-\iota_{0}w\right\rangle _{H_{\rho,0}(\mathbb{R};H)}\\
 & =\Re\left\langle A_{\rho,\lambda}(\iota_{0}v+\iota_{1}\iota_{1}^{\ast}u_{\lambda})-A_{\rho,\lambda}(\iota_{0}w+\iota_{1}\iota_{1}^{\ast}u_{\lambda})\left|\left(\iota_{0}v+\iota_{1}\iota_{1}^{\ast}u_{\lambda}\right)-\left(\iota_{0}w+\iota_{1}\iota_{1}^{\ast}u_{\lambda}\right)\right.\right\rangle _{H_{\rho,0}(\mathbb{R};H)}\\
 & \geq0,
\end{align*}
where we have used the monotonicity of $A_{\rho,\lambda}.$ Moreover,
$B_{\lambda}$ is Lipschitz-continuous. Indeed, for $v,w\in H_{\rho,0}(\mathbb{R};N(M_{0}))$
we have that 
\begin{align*}
|B_{\lambda}(v)-B_{\lambda}(w)|_{H_{\rho,0}(\mathbb{R};N(M_{0}))} & =|\iota_{0}^{\ast}A_{\rho,\lambda}(\iota_{0}v+\iota_{1}\iota_{1}^{\ast}u_{\lambda})-\iota_{0}^{\ast}A_{\rho,\lambda}(\iota_{0}w+\iota_{1}\iota_{1}^{\ast}u_{\lambda})|_{H_{\rho,0}(\mathbb{R};N(M_{0}))}\\
 & \leq\lambda^{-1}|\iota_{0}v-\iota_{0}w|_{H_{\rho,0}(\mathbb{R};H)}\\
 & =\lambda^{-1}|v-w|_{H_{\rho,0}(\mathbb{R};N(M_{0}))}.
\end{align*}
Thus, by \prettyref{cor:Lipschitz_maxmon}, $B_{\lambda}$ is maximal
monotone. Hence, we find a unique solution $v\in H_{\rho,0}(\mathbb{R};N(M_{0}))$
of 
\[
\delta v+B_{\lambda}(v)=\iota_{0}^{\ast}f,
\]
which thus coincides with $\iota_{0}^{\ast}u_{\lambda}$ by \prettyref{eq:subspace_2}.
Furthermore, we compute for $h>0$
\begin{align*}
 & \delta\tau_{h}v+B_{\lambda}(\tau_{h}v)\\
 & =\delta\tau_{h}v+\iota_{0}^{\ast}A_{\rho,\lambda}(\iota_{0}\tau_{h}v+\iota_{1}\iota_{1}^{\ast}u_{\lambda})\\
 & =\delta\tau_{h}v+\iota_{0}^{\ast}A_{\rho,\lambda}(\tau_{h}(\iota_{0}v+\iota_{1}\iota_{1}^{\ast}u_{\lambda}))+\iota_{0}^{\ast}A_{\rho,\lambda}(\iota_{0}\tau_{h}v+\iota_{1}\iota_{1}^{\ast}u_{\lambda})-\iota_{0}^{\ast}A_{\rho,\lambda}(\tau_{h}(\iota_{0}v+\iota_{1}\iota_{1}^{\ast}u_{\lambda}))\\
 & =\tau_{h}(\delta v+B_{\lambda}(v))+\iota_{0}^{\ast}A_{\rho,\lambda}(\iota_{0}\tau_{h}v+\iota_{1}\iota_{1}^{\ast}u_{\lambda})-\iota_{0}^{\ast}A_{\rho,\lambda}(\tau_{h}(\iota_{0}v+\iota_{1}\iota_{1}^{\ast}u_{\lambda}))\\
 & =\tau_{h}\iota_{0}^{\ast}f+\iota_{0}^{\ast}A_{\rho,\lambda}(\iota_{0}\tau_{h}v+\iota_{1}\iota_{1}^{\ast}u_{\lambda})-\iota_{0}^{\ast}A_{\rho,\lambda}(\tau_{h}(\iota_{0}v+\iota_{1}\iota_{1}^{\ast}u_{\lambda})),
\end{align*}
where we have used that $A_{\rho,\lambda}\circ\tau_{h}=\tau_{h}\circ A_{\rho,\lambda},$
which follows from \prettyref{eq:extension_autonomous}. Thus, we
estimate 
\begin{align*}
 & \Re\left\langle \tau_{h}\iota_{0}^{\ast}f+\iota_{0}^{\ast}A_{\rho,\lambda}(\iota_{0}\tau_{h}v+\iota_{1}\iota_{1}^{\ast}u_{\lambda})-\iota_{0}^{\ast}A_{\rho,\lambda}(\tau_{h}(\iota_{0}v+\iota_{1}\iota_{1}^{\ast}u_{\lambda}))-\iota_{0}^{\ast}f|\tau_{h}v-v\right\rangle _{H_{\rho,0}(\mathbb{R};N(M_{0}))}\\
 & =\Re\langle\delta\tau_{h}v+B_{\lambda}(\tau_{h}v)-\left(\delta v+B_{\lambda}(v)\right)|\tau_{h}v-v\rangle_{H_{\rho,0}(\mathbb{R};N(M_{0}))}\\
 & \geq\delta|\tau_{h}v-v|_{H_{\rho,0}(\mathbb{R};N(M_{0}))}^{2}
\end{align*}
and hence, by the Cauchy-Schwarz-inequality and the Lipschitz-continuity
of $A_{\rho,\lambda}$
\[
|\tau_{h}v-v|_{H_{\rho,0}(\mathbb{R};N(M_{0}))}\leq\frac{1}{\delta}\left(|\iota_{0}^{\ast}(\tau_{h}f-f)|_{H_{\rho,0}(\mathbb{R};N(M_{0}))}+\frac{1}{\lambda}|\left(\tau_{h}-1\right)\iota_{1}^{\ast}u_{\lambda}|_{H_{\rho,0}(\mathbb{R};N(M_{0})^{\bot})}\right).
\]
Dividing the latter inequality by $h$ and using that $f$ and $\iota_{1}^{\ast}u_{\lambda}$
are in $D(\partial_{0,\rho})$, we derive that $\left(h^{-1}(\tau_{h}-1)v\right)_{h\in]0,1]}$
is bounded, which yields that $\iota_{0}^{\ast}u_{\lambda}=v\in D(\partial_{0,\rho})$
by \prettyref{prop:chara_diff}. This completes the proof.
\end{proof}
We are now able to state the existence result for \prettyref{eq:substitute_prob}.
\begin{prop}
\label{prop:existence}Let $f\in C_{c}^{\infty}(\mathbb{R};H).$ Then
there exists $\rho_{0}>0$ such that for every $\rho\geq\rho_{0}$
we find $u\in H_{\rho,0}(\mathbb{R};H)$ such that 
\[
(u,f)\in\partial_{0,\rho}M_{0}(\mathrm{m})-M_{0}'(\mathrm{m})+\delta+A_{\rho}.
\]
\end{prop}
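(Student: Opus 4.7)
The plan is to invoke the perturbation theorem \prettyref{prop:pert} for the pair of maximal monotone operators $C:=\partial_{0,\rho}M_{0}(\mathrm{m})-M_{0}'(\mathrm{m})+\delta-\tilde{c}$ and $A_{\rho}$. Here one fixes $\tilde{c}\in(0,\delta)$ and, invoking \prettyref{lem:max_mon_material-1} for the pair $(M_{0},\delta-M_{0}')$ (which satisfies condition (d) with $c_{1}=\delta$ since $\iota_{0}^{\ast}M_{0}'(\mathrm{m})\iota_{0}=0$), one chooses $\rho_{0}>0$ such that $C$ is maximal monotone for all $\rho\geq\rho_{0}$. Maximal monotonicity of $A_{\rho}$ is provided by \prettyref{lem:extension_max_mon} (using $(0,0)\in A$), and the two post-sets share $0$; a rescaling by $1/\tilde{c}$ brings the inclusion into the ``$1+A+B$''-form required by \prettyref{prop:pert}. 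The unique approximate solution $u_{\lambda}$ of \prettyref{eq:approx_prob} is furnished by \prettyref{cor:pert_Lip}, so the existence assertion reduces to proving that $(A_{\rho,\lambda}(u_{\lambda}))_{\lambda>0}$ is bounded in $H_{\rho,0}(\mathbb{R};H)$.

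Because $f\in C_{c}^{\infty}(\mathbb{R};H)\subseteq H_{\rho,1}(\mathbb{R};H)$, \prettyref{lem:diff_sol} yields $u_{\lambda}\in H_{\rho,1}(\mathbb{R};H)$; the chain rule \prettyref{eq:chainrule} then reduces \prettyref{eq:approx_prob} to the simplified equation
\[
M_{0}(\mathrm{m})\partial_{0,\rho}u_{\lambda}+\delta u_{\lambda}+A_{\rho,\lambda}(u_{\lambda})=f.
\]
Since \prettyref{prop:uniq} already yields $|u_{\lambda}|_{H_{\rho,0}(\mathbb{R};H)}\leq\tilde{c}^{-1}|f|_{H_{\rho,0}(\mathbb{R};H)}$ uniformly in $\lambda$, solving for $A_{\rho,\lambda}(u_{\lambda})=f-\delta u_{\lambda}-M_{0}(\mathrm{m})\partial_{0,\rho}u_{\lambda}$ shows that the desired boundedness follows once $|\partial_{0,\rho}u_{\lambda}|_{H_{\rho,0}(\mathbb{R};H)}$ is bounded uniformly in $\lambda$.

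The key estimate is produced via the difference-quotient method. Applying $\tau_{h}$ to the simplified equation, subtracting, and dividing by $h>0$, one obtains
\[
M_{0}(\mathrm{m})\partial_{0,\rho}D_{h}u_{\lambda}+(D_{h}M_{0})(\mathrm{m})\partial_{0,\rho}\tau_{h}u_{\lambda}+\delta D_{h}u_{\lambda}+D_{h}A_{\rho,\lambda}(u_{\lambda})=D_{h}f,
\]
with $D_{h}g:=h^{-1}(\tau_{h}g-g)$. Testing in $H_{\rho,0}(\mathbb{R};H)$ against $D_{h}u_{\lambda}$: the $A_{\rho,\lambda}$-term is nonnegative since $A_{\rho,\lambda}$ is monotone and commutes with $\tau_{h}$ (by \prettyref{eq:extension_autonomous}); the term $\Re\langle M_{0}(\mathrm{m})\partial_{0,\rho}D_{h}u_{\lambda}|D_{h}u_{\lambda}\rangle$ is controlled below by $(\rho c_{0}-\tfrac{3}{2}|M_{0}|_{\mathrm{Lip}})|\iota_{1}^{\ast}D_{h}u_{\lambda}|_{H_{\rho,0}}^{2}$ via a \prettyref{lem:max_mon_material-1}-type calculation combined with the chain rule; and condition (c) together with selfadjointness of $M_{0}$ forces $(D_{h}M_{0})(t)\iota_{0}=\iota_{0}^{\ast}(D_{h}M_{0})(t)=0$, so the middle term pairs only $\iota_{1}^{\ast}$-components. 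Applying Young's inequality to absorb the resulting cross-term, passing to $h\to 0^{+}$ via \prettyref{prop:chara_diff}, and enlarging $\rho_{0}$ so that the remaining $|M_{0}|_{\mathrm{Lip}}$-contributions are dominated by $\rho c_{0}$, one arrives at
\[
\delta|\partial_{0,\rho}u_{\lambda}|_{H_{\rho,0}(\mathbb{R};H)}^{2}\leq|\partial_{0,\rho}f|_{H_{\rho,0}(\mathbb{R};H)}|\partial_{0,\rho}u_{\lambda}|_{H_{\rho,0}(\mathbb{R};H)},
\]
so that $|\partial_{0,\rho}u_{\lambda}|_{H_{\rho,0}(\mathbb{R};H)}\leq\delta^{-1}|\partial_{0,\rho}f|_{H_{\rho,0}(\mathbb{R};H)}$. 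The central technical point is the handling of the $(D_{h}M_{0})(\mathrm{m})$-term: it is precisely condition (c), the time-independence of $N(M_{0})$, that localizes this term to the range-part of $M_{0}$, allowing its $|M_{0}|_{\mathrm{Lip}}$-contribution to be absorbed into the $\rho c_{0}$-coefficient for $\rho$ large; without this structural feature the uniform control of $\partial_{0,\rho}u_{\lambda}$ would fail. With $(A_{\rho,\lambda}(u_{\lambda}))_{\lambda}$ uniformly bounded, \prettyref{prop:pert} yields $u\in H_{\rho,0}(\mathbb{R};H)$ with $(u,f)\in\partial_{0,\rho}M_{0}(\mathrm{m})-M_{0}'(\mathrm{m})+\delta+A_{\rho}$.
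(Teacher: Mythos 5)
Your proposal is correct and follows essentially the same route as the paper: Yosida approximation via \prettyref{cor:pert_Lip}, the regularity $u_{\lambda}\in H_{\rho,1}(\mathbb{R};H)$ from \prettyref{lem:diff_sol}, a uniform bound on $\partial_{0,\rho}u_{\lambda}$ obtained by differentiating the equation in time and exploiting the monotonicity of $A_{\rho,\lambda}$ together with its commutation with $\tau_{h}$, and finally \prettyref{prop:pert}. The only (cosmetic) difference is that the paper packages the time-differentiated Yosida term into the auxiliary operator $B_{\lambda}=\partial_{0,\rho}A_{\rho,\lambda}\partial_{0,\rho}^{-1}$ and then reuses the monotonicity of $\partial_{0,\rho}M_{0}(\mathrm{m})+\delta-\tilde{c}$ supplied by \prettyref{lem:max_mon_material-1}, whereas you carry out the difference-quotient computation directly on the equation and absorb the $(D_{h}M_{0})(\mathrm{m})$-term by hand.
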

\begin{proof}
Let $0<\tilde{c}<\delta.$ We choose $\rho_{0}>0$ such that $\partial_{0,\rho}M_{0}(\mathrm{m})-M_{0}'(\mathrm{m})+\delta-\tilde{c}$
and $\partial_{0,\rho}M_{0}(\mathrm{m})+\delta-\tilde{c}$ are maximal
monotone for all $\rho\geq\rho_{0}$%
\footnote{Note that also the pair $(M_{0},\delta)$ satisfies the conditions
(a)-(d) and thus, \prettyref{lem:max_mon_material-1} is applicable.%
}. Let $\rho\geq\rho_{0}.$ For $\lambda>0$ let $u_{\lambda}\in H_{\rho,0}(\mathbb{R};H)$
such that \prettyref{eq:approx_prob} is satisfied. Then, by \prettyref{lem:diff_sol}
$u_{\lambda}\in H_{\rho,1}(\mathbb{R};H).$ In order to show the assertion
we have to prove that the family $\left(A_{\rho,\lambda}(u_{\lambda})\right)_{\lambda>0}$
is bounded (see \prettyref{prop:pert}). For doing so, we define 
\begin{align*}
B_{\lambda}:D(B_{\lambda})\subseteq H_{\rho,0}(\mathbb{R};H) & \to H_{\rho,0}(\mathbb{R};H)\\
v & \mapsto\partial_{0,\rho}A_{\rho,\lambda}(\partial_{0,\rho}^{-1}v)
\end{align*}
for $\lambda>0$ with maximal domain $D(B_{\lambda})\coloneqq\{v\in H_{\rho,0}(\mathbb{R};H)\,|\, A_{\rho,\lambda}(\partial_{0,\rho}^{-1}v)\in H_{\rho,1}(\mathbb{R};H)\}.$
Then for $v\in D(B_{\lambda})$ we estimate, using \prettyref{prop:chara_diff}
\begin{align}
\Re\langle B_{\lambda}(v)|v\rangle_{H_{\rho,0}(\mathbb{R};H)} & =\Re\langle\partial_{0,\rho}A_{\rho,\lambda}(\partial_{0,\rho}^{-1}v)|v\rangle_{H_{\rho,0}(\mathbb{R};H)}\nonumber \\
 & =\lim_{h\to0+}\frac{1}{h^{2}}\Re\langle\tau_{h}A_{\rho,\lambda}(\partial_{0,\rho}^{-1}v)-A_{\rho,\lambda}(\partial_{0,\rho}^{-1}v)|\tau_{h}\partial_{0,\rho}^{-1}v-\partial_{0,\rho}^{-1}v\rangle_{H_{\rho,0}(\mathbb{R};H)}\nonumber \\
 & =\lim_{h\to0+}\frac{1}{h^{2}}\Re\langle A_{\rho,\lambda}(\tau_{h}\partial_{0,\rho}^{-1}v)-A_{\rho,\lambda}(\partial_{0,\rho}^{-1}v)|\tau_{h}\partial_{0,\rho}^{-1}v-\partial_{0,\rho}^{-1}v\rangle_{H_{\rho,0}(\mathbb{R};H)}\nonumber \\
 & \geq0,\label{eq:B_mon}
\end{align}
due to the monotonicity of $A_{\rho,\lambda}.$ Since $u_{\lambda}\in H_{\rho,1}(\mathbb{R};H)$
for each $\lambda>0$ we obtain, using \prettyref{prop:chara_diff}
\begin{align*}
\sup_{h\in]0,1]}\frac{1}{h}\left|\tau_{h}A_{\rho,\lambda}(u_{\lambda})-A_{\rho,\lambda}(u_{\lambda})\right|_{H_{\rho,0}(\mathbb{R};H)} & =\sup_{h\in]0,1]}\frac{1}{h}\left|A_{\rho,\lambda}(\tau_{h}u_{\lambda})-A_{\rho,\lambda}(u_{\lambda})\right|_{H_{\rho,0}(\mathbb{R};H)}\\
 & \leq\frac{1}{\lambda}\sup_{h\in]0,1]}\frac{1}{h}|\tau_{h}u_{\lambda}-u_{\lambda}|_{H_{\rho,0}(\mathbb{R};H)}<\infty
\end{align*}
and thus, again by \prettyref{prop:chara_diff}, $A_{\rho,\lambda}(u_{\lambda})\in H_{\rho,1}(\mathbb{R};H)$
for each $\lambda>0$. In other words, this means that $\partial_{0,\rho}u_{\lambda}\in D(B_{\lambda})$
for each $\lambda>0.$ Moreover, using \prettyref{eq:chainrule} we
obtain 
\begin{align*}
B_{\lambda}(\partial_{0,\rho}u_{\lambda}) & =\partial_{0,\rho}A_{\rho,\lambda}(u_{\lambda})\\
 & =\partial_{0,\rho}\left(f-\left(\partial_{0,\rho}M_{0}(\mathrm{m})-M_{0}'(\mathrm{m})+\delta\right)u_{\lambda}\right)\\
 & =\partial_{0,\rho}f-\partial_{0,\rho}M_{0}(\mathrm{m})\partial_{0,\rho}u_{\lambda}-\delta\partial_{0,\rho}u_{\lambda},
\end{align*}
which means that $\partial_{0,\rho}u_{\lambda}$ solves the differential
equation 
\[
\left(\partial_{0,\rho}M_{0}(\mathrm{m})+\delta+B_{\lambda}\right)(\partial_{0,\rho}u_{\lambda})=\partial_{0,\rho}f
\]
for each $\lambda>0.$ Then we estimate, using \prettyref{eq:B_mon}
and the monotonicity of $\partial_{0,\rho}M_{0}(\mathrm{m})+\delta-\tilde{c}$
\begin{align*}
 & \Re\langle\partial_{0,\rho}f|\partial_{0,\rho}u_{\lambda}\rangle_{H_{\rho,0}(\mathbb{R};H)}\\
 & =\Re\left\langle \left.\left(\partial_{0,\rho}M_{0}(\mathrm{m})+\delta\right)\partial_{0,\rho}u_{\lambda}\right|\partial_{0,\rho}u_{\lambda}\right\rangle _{H_{\rho,0}(\mathbb{R};H)}+\Re\langle B_{\lambda}(\partial_{0,\rho}u_{\lambda})|\partial_{0,\rho}u_{\lambda}\rangle_{H_{\rho,0}(\mathbb{R};H)}\\
 & \geq\tilde{c}|\partial_{0,\rho}u_{\lambda}|_{H_{\rho,0}(\mathbb{R};H)}^{2}
\end{align*}
and hence, 
\[
|\partial_{0,\rho}u_{\lambda}|_{H_{\rho,0}(\mathbb{R};H)}\leq\frac{1}{\tilde{c}}|\partial_{0,\rho}f|_{H_{\rho,0}(\mathbb{R};H)}.
\]
Thus, since $u_{\lambda}$ solves \prettyref{eq:approx_prob}, we
get that 
\begin{align*}
|A_{\rho,\lambda}(u_{\lambda})|_{H_{\rho,0}(\mathbb{R};H)} & =\left|f-\left(\partial_{0,\rho}M_{0}(\mathrm{m})-M_{0}'(\mathrm{m})+\delta\right)u_{\lambda}\right|_{H_{\rho,0}(\mathbb{R};H)}\\
 & \leq|f|_{H_{\rho,0}(\mathbb{R};H)}+|M_{0}(\mathrm{m})\partial_{0,\rho}u_{\lambda}|_{H_{\rho,0}(\mathbb{R};H)}+\delta|u_{\lambda}|_{H_{\rho,0}(\mathbb{R};H)}\\
 & \leq|f|_{H_{\rho,0}(\mathbb{R};H)}+\frac{1}{\tilde{c}}|M_{0}|_{\infty}|\partial_{0,\rho}f|_{H_{\rho,0}(\mathbb{R};H)}+\delta|u_{\lambda}|_{H_{\rho,0}(\mathbb{R};H)}
\end{align*}
for each $\lambda>0.$ Now using that $(0,0)$ also satisfies \prettyref{eq:approx_prob},
we can estimate, using \prettyref{prop:uniq} 
\[
|u_{\lambda}|_{H_{\rho,0}(\mathbb{R};H)}\leq\frac{1}{\tilde{c}}|f|_{H_{\rho,0}(\mathbb{R};H)}
\]
for each $\lambda>0$. Summarizing we get that 
\[
\sup_{\lambda>0}|A_{\rho,\lambda}(u_{\lambda})|_{H_{\rho,0}(\mathbb{R};H)}\leq\left(1+\frac{\delta}{\tilde{c}}\right)|f|_{H_{\rho,0}(\mathbb{R};H)}+\frac{1}{\tilde{c}}|M_{0}|_{\infty}|\partial_{0,\rho}f|_{H_{\rho,0}(\mathbb{R};H)}
\]
for each $\lambda>0$, which completes the proof.
\end{proof}
We summarize our so far findings.
\begin{thm}
\label{thm:sol_th_substitute_prob}For each $\delta>0$ there exists
$\rho_{0}>0$ such that for all $\rho\geq\rho_{0}$ the problem \prettyref{eq:substitute_prob}
is well-posed, i.e. the inverse relation $\left(\overline{\partial_{0,\rho}M_{0}(\mathrm{m})-M_{0}'(\mathrm{m})+\delta+A_{\rho}}\right)^{-1}$
is a Lipschitz-continuous mapping defined on the whole space $H_{\rho,0}(\mathbb{R};H).$
More precisely, for each $0<\tilde{c}<\delta$ there exists $\rho_{0}>0$
such that for each $\rho\geq\rho_{0}$ the relation $\overline{\partial_{0,\rho}M_{0}(\mathrm{m})-M_{0}'(\mathrm{m})+\delta+A_{\rho}}-\tilde{c}\subseteq H_{\rho,0}(\mathbb{R};H)\oplus H_{\rho,0}(\mathbb{R};H)$
is maximal monotone. \end{thm}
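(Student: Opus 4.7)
The plan is to chain together the ingredients already assembled through a density argument and then invoke Minty's theorem (\prettyref{thm:Minty}). Throughout, fix $0 < \tilde c < \delta$ and set
\[
S_\rho \coloneqq \partial_{0,\rho}M_0(\mathrm{m}) - M_0'(\mathrm{m}) + \delta + A_\rho.
\]
I would first establish that $\overline{S_\rho} - \tilde c$ is monotone, then that $\overline{S_\rho}$ is surjective, and finally combine the two via Minty to get maximal monotonicity of $\overline{S_\rho} - \tilde c$.

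For monotonicity, I observe that the pair $(M_0, \delta - M_0')$ satisfies conditions (a)--(d) with the same $c_0$ and with $c_1 = \delta$; this uses $\iota_0^\ast M_0'(\mathrm{m})\iota_0 = 0$, which is valid because $N(M_0)$ is time-independent. \prettyref{lem:max_mon_material-1} then supplies a threshold $\rho_0 > 0$ such that $\partial_{0,\rho}M_0(\mathrm{m}) - M_0'(\mathrm{m}) + \delta - \tilde c$ is maximal monotone for every $\rho \geq \rho_0$. Since $A_\rho$ is maximal monotone by \prettyref{lem:extension_max_mon}, $S_\rho - \tilde c$ is a sum of monotone relations, hence monotone, and its closure $\overline{S_\rho} - \tilde c$ inherits monotonicity.

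For surjectivity, I extend \prettyref{prop:existence} from $C_c^\infty(\mathbb{R};H)$ to all of $H_{\rho,0}(\mathbb{R};H)$ by density. Given $f \in H_{\rho,0}(\mathbb{R};H)$, pick $f_n \in C_c^\infty(\mathbb{R};H)$ with $f_n \to f$ and associated $u_n$ satisfying $(u_n, f_n) \in S_\rho$; \prettyref{prop:uniq}, applied with $B = A_\rho$ and material pair $(M_0, \delta - M_0')$, yields the uniform estimate
\[
|u_n - u_m|_{H_{\rho,0}(\mathbb{R};H)} \leq \tilde c^{-1} |f_n - f_m|_{H_{\rho,0}(\mathbb{R};H)},
\]
so $(u_n)$ is Cauchy with some limit $u$, and $(u, f) \in \overline{S_\rho}$ by definition of the closure. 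To apply \prettyref{thm:Minty} to $\overline{S_\rho} - \tilde c$, I choose $\lambda = 1/\tilde c$: a direct computation from the relation definition gives
\[
1 + \lambda(\overline{S_\rho} - \tilde c) = \{(x, (1 - \lambda \tilde c)x + \lambda y) : (x, y) \in \overline{S_\rho}\} = \lambda \overline{S_\rho},
\]
whose post-set of $H_{\rho,0}(\mathbb{R};H)$ equals $H_{\rho,0}(\mathbb{R};H)$ by the surjectivity of $\overline{S_\rho}$ just established. Hence \prettyref{thm:Minty} delivers maximal monotonicity of $\overline{S_\rho} - \tilde c$. The Lipschitz continuity of $(\overline{S_\rho})^{-1}$ on the entire space with constant $\tilde c^{-1}$ then follows by extending \prettyref{prop:uniq} by continuity.

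The main obstacle has essentially been defused by the earlier \prettyref{prop:existence} and \prettyref{prop:uniq}; the remaining task is bookkeeping. The only nontrivial observation is that the natural Minty parameter $\lambda = \tilde c^{-1}$ collapses the range condition for $\overline{S_\rho} - \tilde c$ to the range condition for $\overline{S_\rho}$, which is exactly what the density/Cauchy argument supplies.
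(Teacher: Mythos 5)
Your proposal is correct and follows essentially the same route as the paper: monotonicity of the shifted relation via \prettyref{lem:max_mon_material-1} applied to the pair $(M_{0},\delta-M_{0}')$, dense range from \prettyref{prop:existence}, surjectivity of the closure via the a priori estimate of \prettyref{prop:uniq}, and then Minty's theorem. You merely make explicit two steps the paper leaves implicit, namely the Cauchy-sequence argument upgrading dense range to surjectivity of the closure and the choice $\lambda=\tilde{c}^{-1}$ in \prettyref{thm:Minty}.
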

\begin{proof}
Let $0<\tilde{c}<\delta.$ By \prettyref{lem:max_mon_material-1}
there exists $\rho_{0}>0$ such that for every $\rho\geq\rho_{0}$
the relation $\partial_{0,\rho}M_{0}(\mathrm{m})-M_{0}'(\mathrm{m})+\delta+A_{\rho}-\tilde{c}$
is monotone, which then also holds for its closure. Moreover, \prettyref{prop:existence}
shows that $\left(\partial_{0,\rho}M_{0}(\mathrm{m})-M_{0}'(\mathrm{m})+\delta+A_{\rho}\right)\left[H_{\rho,0}(\mathbb{R};H)\right]$
contains the test functions (if we choose $\rho$ large enough) and
therefore it is dense in $H_{\rho,0}(\mathbb{R};H)$. The latter yields
that $\left(\overline{\partial_{0,\rho}M_{0}(\mathrm{m})-M_{0}'(\mathrm{m})+\delta+A_{\rho}}\right)\left[H_{\rho,0}(\mathbb{R};H)\right]=H_{\rho,0}(\mathbb{R};H),$
which in turn implies the maximal monotonicity of $\overline{\partial_{0,\rho}M_{0}(\mathrm{m})-M_{0}'(\mathrm{m})+\delta+A_{\rho}}-\tilde{c}$
by \prettyref{thm:Minty}.
\end{proof}
Now we come back to our original problem \prettyref{eq:prob}. It
turns out that the well-posedness for this inclusion just relies on
the perturbation result stated in \prettyref{prop:pert-Lip1}.
\begin{thm}[Well-posedness]
\label{thm:well-posedness} For every $0<\tilde{c}<c_{1}$ there
exists $\rho_{0}>0$ such that for every $\rho\geq\rho_{0}$ the relation
$\overline{\partial_{0,\rho}M_{0}(\mathrm{m})+M_{1}(\mathrm{m})+A_{\rho}}-\tilde{c}$
is maximal monotone. In particular 
\[
\left(\overline{\partial_{0,\rho}M_{0}(\mathrm{m})+M_{1}(\mathrm{m})+A_{\rho}}\right)^{-1}:H_{\rho,0}(\mathbb{R};H)\to H_{\rho,0}(\mathbb{R};H)
\]
is a Lipschitz-continuous mapping. In other words, the problem \prettyref{eq:prob}
is well-posed, i.e. for each right hand side $f\in H_{\rho,0}(\mathbb{R};H)$
there exists a unique $u\in H_{\rho,0}(\mathbb{R};H)$ satisfying
\prettyref{eq:prob} and depending continuously on $f$.\end{thm}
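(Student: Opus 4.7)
The plan is to bootstrap from the already established well-posedness of the ``substitute problem'' (Theorem \ref{thm:sol_th_substitute_prob}) by writing the perturbation $M_{1}(\mathrm{m})$ in terms of a $\delta - M_{0}'(\mathrm{m})$ part that is absorbed into the substitute problem, plus a bounded linear remainder. Specifically, for any $\delta > 0$ I would use the decomposition
\[
\partial_{0,\rho}M_{0}(\mathrm{m})+M_{1}(\mathrm{m})+A_{\rho} \;=\; \bigl(\partial_{0,\rho}M_{0}(\mathrm{m})-M_{0}'(\mathrm{m})+\delta+A_{\rho}\bigr) \;+\; \bigl(M_{1}(\mathrm{m})+M_{0}'(\mathrm{m})-\delta\bigr),
\]
where the second summand is a bounded linear (hence Lipschitz-continuous) operator on $H_{\rho,0}(\mathbb{R};H)$. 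Since closures commute with the addition of a bounded everywhere-defined operator, the same decomposition holds for the closure of the left-hand side.

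For the monotonicity part, let $0<\tilde{c}<c_{1}$. By \prettyref{lem:max_mon_material-1} we can choose $\rho_{0}>0$ so large that $\partial_{0,\rho}M_{0}(\mathrm{m})+M_{1}(\mathrm{m})-\tilde{c}$ is monotone for every $\rho\geq\rho_{0}$. Adding the monotone relation $A_{\rho}$ (which is monotone by \prettyref{lem:extension_max_mon} since $(0,0)\in A$) keeps monotonicity, and this passes to the closure. Thus $\overline{\partial_{0,\rho}M_{0}(\mathrm{m})+M_{1}(\mathrm{m})+A_{\rho}}-\tilde{c}$ is monotone.

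For maximal monotonicity I would apply \prettyref{prop:pert-Lip1}. Fixing any $\delta>\tilde{c}$ and any $\tilde{c}'\in(\tilde{c},\delta)$, Theorem \ref{thm:sol_th_substitute_prob} yields (possibly after enlarging $\rho_{0}$) that $\overline{\partial_{0,\rho}M_{0}(\mathrm{m})-M_{0}'(\mathrm{m})+\delta+A_{\rho}}-\tilde{c}'$ is maximal monotone for every $\rho\geq\rho_{0}$. The complementary perturbation $M_{1}(\mathrm{m})+M_{0}'(\mathrm{m})-\delta+\tilde{c}'-\tilde{c}$ is a bounded linear operator on $H_{\rho,0}(\mathbb{R};H)$ and therefore Lipschitz-continuous. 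Since their sum equals $\overline{\partial_{0,\rho}M_{0}(\mathrm{m})+M_{1}(\mathrm{m})+A_{\rho}}-\tilde{c}$, which is monotone by the previous paragraph, \prettyref{prop:pert-Lip1} upgrades the sum to a maximal monotone relation.

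Finally, maximal monotonicity combined with the strict monotonicity estimate from \prettyref{prop:uniq} (applied with $B=A_{\rho}$) shows that the inverse $(\overline{\partial_{0,\rho}M_{0}(\mathrm{m})+M_{1}(\mathrm{m})+A_{\rho}})^{-1}$ is a Lipschitz-continuous mapping with constant $1/\tilde{c}$, defined on all of $H_{\rho,0}(\mathbb{R};H)$ by Minty's theorem (\prettyref{thm:Minty}). The main subtlety, and really the only one, is the clean bookkeeping of the shift constants so that $\tilde{c}'<\delta$ is available to invoke the substitute theorem while the final sum is still shifted by exactly $\tilde{c}$; everything else is a direct application of the tools assembled in Sections 2 and 3.
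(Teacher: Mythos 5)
Your proposal is correct and follows essentially the same route as the paper: the paper likewise writes $\overline{\partial_{0,\rho}M_{0}(\mathrm{m})+M_{1}(\mathrm{m})+A_{\rho}}-\tilde{c}$ as the maximal monotone relation $\overline{\partial_{0,\rho}M_{0}(\mathrm{m})-M_{0}'(\mathrm{m})+\delta+A_{\rho}}-\tilde{c}$ (with the concrete choice $\delta=2\tilde{c}$ in place of your general $\delta,\tilde{c}'$ bookkeeping) plus the bounded linear perturbation $M_{1}(\mathrm{m})+M_{0}'(\mathrm{m})-2\tilde{c}$, checks monotonicity via Lemma \ref{lem:max_mon_material-1}, and concludes with Proposition \ref{prop:pert-Lip1}.
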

\begin{proof}
Let $0<\tilde{c}<c_{1}.$ According to \prettyref{thm:sol_th_substitute_prob}
there exists $\rho_{0}>0$ such that for all $\rho\geq\rho_{0}$ the
relation 
\[
\overline{\partial_{0,\rho}M_{0}(\mathrm{m})-M_{0}'(\mathrm{m})+2\tilde{c}+A_{\rho}}-\tilde{c}
\]
is maximal monotone. Furthermore, by \prettyref{lem:max_mon_material-1}
there exists $\rho_{1}>0$ such that for all $\rho\geq\rho_{1}$
\[
\overline{\partial_{0,\rho}M_{0}(\mathrm{m})+M_{1}(\mathrm{m})+A_{\rho}}-\tilde{c}
\]
is monotone. Thus, for $\rho\geq\max\{\rho_{0},\rho_{1}\}$ the relation
\[
\overline{\partial_{0,\rho}M_{0}(\mathrm{m})+M_{1}(\mathrm{m})+A_{\rho}}-\tilde{c}=\overline{\partial_{0,\rho}M_{0}(\mathrm{m})-M_{0}'(\mathrm{m})+2\tilde{c}+A_{\rho}}-\tilde{c}+\left(M_{1}(\mathrm{m})+M_{0}'(\mathrm{m})-2\tilde{c}\right)
\]
is maximal monotone by \prettyref{prop:pert-Lip1}.
\end{proof}

\subsection{Causality}

In this section we show that our solution operator $\left(\overline{\partial_{0,\rho}M_{0}(m)+M_{1}(m)+A_{\rho}}\right)^{-1}$
corresponding to the differential inclusion \prettyref{eq:prob} is
causal in $H_{\rho,0}(\mathbb{R};H)$ and independent of the parameter
$\rho$ in the sense that for $f\in H_{\rho,0}(\mathbb{R};H)\cap H_{\nu,0}(\mathbb{R};H)$
we have 
\[
\left(\overline{\partial_{0,\rho}M_{0}(m)+M_{1}(m)+A_{\rho}}\right)^{-1}(f)=\left(\overline{\partial_{0,\nu}M_{0}(m)+M_{1}(m)+A_{\nu}}\right)^{-1}(f)
\]
as functions in $L_{2,\mathrm{loc}}(\mathbb{R};H)$. The definition
of causality in our framework is the following:
\begin{defn}
\label{DefCausal} Let $F:H_{\rho,0}(\mathbb{R};H)\to H_{\rho,0}(\mathbb{R};H)$.
$F$ is called \emph{forward causal} (or simply \emph{causal)}, if
for each $a\in\mathbb{R}$ and $u,v\in H_{\rho,0}(\mathbb{R};H)$
the implication 
\[
\chi_{]-\infty,a]}(\mathrm{m})(u-v)=0\Rightarrow\chi_{]-\infty,a]}(\mathrm{m})\left(F(u)-F(v)\right)=0
\]
holds. Conversely, $F$ is called \emph{backward causal} (or \emph{anticausal),}
if for each $a\in\mathbb{R}$ and $u,v\in H_{\rho,0}(\mathbb{R};H)$
the implication 
\[
\chi_{[a,\infty[}(\mathrm{m})(u-v)=0\Rightarrow\chi_{[a,\infty[}(\mathrm{m})\left(F(u)-F(v)\right)=0
\]
holds.\end{defn}
\begin{prop}
\label{prop:causality}There exists $\rho_{0}>0$ such that for every
$\rho\geq\rho_{0}$ the solution operator $\left(\overline{\partial_{0,\rho}M_{0}(\mathrm{m})+M_{1}(\mathrm{m})+A_{\rho}}\right)^{-1}$
is a causal mapping in $H_{\rho,0}(\mathbb{R};H)$.\end{prop}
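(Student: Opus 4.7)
The plan is to exploit the refined estimate in \prettyref{lem:max_mon_material-1}, which gives a lower bound involving only the truncated integral over $]-\infty,a]$, together with the \emph{pointwise} monotonicity of the relation $A_\rho$. This localization-in-time is precisely what causality asks for.

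First I would fix $0<\tilde c<c_1$ and choose $\rho_0>0$ large enough (as in \prettyref{thm:well-posedness}) so that both the well-posedness of \prettyref{eq:prob} holds and the estimate of \prettyref{lem:max_mon_material-1} yields, for a suitable $\varepsilon>0$,
\[
\Re\int\limits_{-\infty}^{a}\langle(\partial_{0,\rho}M_{0}(\mathrm{m})+M_{1}(\mathrm{m}))w(t)|w(t)\rangle e^{-2\rho t}\,\mathrm{d}t\geq\tilde c\int\limits_{-\infty}^{a}|w(t)|^{2}e^{-2\rho t}\,\mathrm{d}t
\]
for every $w\in D(\partial_{0,\rho}M_{0}(\mathrm{m}))$ and every $a\in\mathbb{R}$.

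Next, given two right-hand sides $f_{1},f_{2}\in H_{\rho,0}(\mathbb{R};H)$ with corresponding (unique) solutions $u_{1},u_{2}$ of \prettyref{eq:prob}, I would pass through the closure: pick sequences $(u_{i}^{(n)},f_{i}^{(n)})\in\partial_{0,\rho}M_{0}(\mathrm{m})+M_{1}(\mathrm{m})+A_{\rho}$ converging to $(u_{i},f_{i})$ in $H_{\rho,0}(\mathbb{R};H)\oplus H_{\rho,0}(\mathbb{R};H)$. By the definition of the sum of relations, there are $w_{i}^{(n)}$ with $(u_{i}^{(n)},w_{i}^{(n)})\in A_{\rho}$ and $f_{i}^{(n)}=(\partial_{0,\rho}M_{0}(\mathrm{m})+M_{1}(\mathrm{m}))u_{i}^{(n)}+w_{i}^{(n)}$. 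Testing the difference with $\chi_{]-\infty,a]}(\mathrm{m})(u_{1}^{(n)}-u_{2}^{(n)})$, using the above estimate on the $\partial_{0,\rho}M_{0}(\mathrm{m})+M_{1}(\mathrm{m})$-term and the pointwise monotonicity of $A$ on the $w_{1}^{(n)}-w_{2}^{(n)}$-term (which, because $\chi_{]-\infty,a]}(t)e^{-2\rho t}\geq0$, integrates to something nonnegative), I get
\[
\Re\langle\chi_{]-\infty,a]}(\mathrm{m})(f_{1}^{(n)}-f_{2}^{(n)})|\chi_{]-\infty,a]}(\mathrm{m})(u_{1}^{(n)}-u_{2}^{(n)})\rangle_{H_{\rho,0}(\mathbb{R};H)}\geq\tilde c\,|\chi_{]-\infty,a]}(\mathrm{m})(u_{1}^{(n)}-u_{2}^{(n)})|^{2}_{H_{\rho,0}(\mathbb{R};H)}.
\]
An application of the Cauchy--Schwarz inequality then gives the truncated Lipschitz bound
\[
|\chi_{]-\infty,a]}(\mathrm{m})(u_{1}^{(n)}-u_{2}^{(n)})|_{H_{\rho,0}(\mathbb{R};H)}\leq\tilde c^{-1}|\chi_{]-\infty,a]}(\mathrm{m})(f_{1}^{(n)}-f_{2}^{(n)})|_{H_{\rho,0}(\mathbb{R};H)},
\]
and passing to the limit $n\to\infty$ (using that $\chi_{]-\infty,a]}(\mathrm{m})$ is bounded, hence continuous) transfers this inequality to $(u_{1},f_{1}),(u_{2},f_{2})$. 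Choosing $f_{1}$, $f_{2}$ that coincide on $]-\infty,a]$ makes the right-hand side vanish and hence $u_{1}=u_{2}$ on $]-\infty,a]$, which is exactly causality in the sense of \prettyref{DefCausal}.

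The main obstacle I anticipate is the interplay between the closure bar and the localization by $\chi_{]-\infty,a]}(\mathrm{m})$: one must be sure that, for each approximating pair, there really exist $w_{i}^{(n)}\in A_{\rho}[\{u_{i}^{(n)}\}]$ so that the pointwise inequality from the monotonicity of $A$ can be invoked, and that Cauchy--Schwarz can be applied before taking closures. This is handled by the fact that every element of the unclosed sum has, by the definition given in the paper, an explicit splitting into the $\partial_{0,\rho}M_{0}(\mathrm{m})+M_{1}(\mathrm{m})$-part and an $A_{\rho}$-part, so that the truncated estimate is available at the approximate level and the continuity of $\chi_{]-\infty,a]}(\mathrm{m})$ does the rest.
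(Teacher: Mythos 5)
Your proposal is correct and follows essentially the same route as the paper: the truncated estimate of \prettyref{lem:max_mon_material-1} over $]-\infty,a]$ combined with the pointwise (a.e.\ in $t$) monotonicity of $A_{\rho}$ yields the localized inequality for pairs in the unclosed sum, and a density argument using the continuity of the solution operator and of $\chi_{]-\infty,a]}(\mathrm{m})$ transfers it to the closure. The only cosmetic difference is that you extract an intermediate truncated Lipschitz bound via Cauchy--Schwarz before passing to the limit, whereas the paper passes to the limit in the bilinear inequality directly and then sets the left-hand side to zero; both are equivalent.
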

\begin{proof}
We choose $\rho_{0}>0$ according to \prettyref{thm:well-posedness}.
Let $a\in\mathbb{R},\,\rho\geq\rho_{0}$ and $f,g\in(\partial_{0,\rho}M_{0}(\mathrm{m})+M_{1}(\mathrm{m})+A_{\rho})[H_{\rho,0}(\mathbb{R};H)]$.
Then there exist two pairs $(u,v),(x,y)\in A_{\rho}$ such that $u,x\in D(\partial_{0,\rho}M_{0}(\mathrm{m}))$
and 
\[
\partial_{0,\rho}M_{0}(\mathrm{m})u+M_{1}(\mathrm{m})u+v=f\quad\text{ and }\quad\partial_{0,\rho}M_{0}(\mathrm{m})x+M_{1}(\mathrm{m})x+y=g.
\]
By using Lemma \ref{lem:max_mon_material-1} and the monotonicity
of $A_{\rho}$ we estimate
\begin{align}
 & \Re\int\limits _{-\infty}^{a}\langle f(t)-g(t)|u(t)-x(t)\rangle e^{-2\rho t}\mbox{ d}t\nonumber \\
 & =\Re\int\limits _{-\infty}^{a}\langle(\partial_{0,\rho}M_{0}(\mathrm{m})+M_{1}(\mathrm{m}))(u-x)(t)+v(t)-y(t)|u(t)-x(t)\rangle e^{-2\rho t}\mbox{ d}t\nonumber \\
 & \geq\tilde{c}\int\limits _{-\infty}^{a}|u(t)-x(t)|^{2}e^{-2\rho t}\mbox{ d}t.\label{eq:causal-1}
\end{align}
Now, let $f,g\in H_{\rho,0}(\mathbb{R};H)$ with $\chi_{]-\infty,a]}(\mathrm{m})(f-g)=0$.
Then there exist two sequences $(f_{n})_{n\in\mathbb{N}},(g_{n})_{n\in\mathbb{N}}$
in $(\partial_{0,\rho}M_{0}(\mathrm{m})+M_{1}(\mathrm{m})+A_{\rho})[H_{\rho,0}(\mathbb{R};H)]$
such that $f_{n}\to f$ and $g_{n}\to g$ in $H_{\rho,0}(\mathbb{R};H)$
as $n\to\infty$. We define
\[
u\coloneqq\left(\overline{\partial_{0,\rho}M_{0}(\mathrm{m})+M_{1}(\mathrm{m})+A_{\rho}}\right)^{-1}f=\lim_{n\to\infty}\left(\partial_{0,\rho}M_{0}(\mathrm{m})+M_{1}(\mathrm{m})+A_{\rho}\right)^{-1}f_{n}
\]
as well as
\[
x\coloneqq\left(\overline{\partial_{0,\rho}M_{0}(\mathrm{m})+M_{1}(\mathrm{m})+A_{\rho}}\right)^{-1}g=\lim_{n\to\infty}\left(\partial_{0,\rho}M_{0}(\mathrm{m})+M_{1}(\mathrm{m})+A_{\rho}\right)^{-1}g_{n}.
\]
Using inequality (\ref{eq:causal-1}) and the continuity of the cut-off
operator $\chi_{]-\infty,a]}(\mathrm{m})$ we obtain
\begin{align*}
0 & =\Re\int\limits _{-\infty}^{a}\langle f(t)-g(t)|u(t)-x(t)\rangle e^{-2\rho t}\mbox{ d}t\\
 & \geq\tilde{c}\int\limits _{-\infty}^{a}|u(t)-x(t)|^{2}e^{-2\rho t}\mbox{ d}t,
\end{align*}
which yields $\chi_{]-\infty,a]}(\mathrm{m})\left(u-x\right)=0.$ 
\end{proof}
The last part of this subsection is devoted to the independence of
the solution operator of problem (\ref{eq:prob}) on the parameter
$\rho>0$. For doing so, we need the following auxiliary results.
\begin{prop}
\label{prop:abstract}Let $X_{0},X_{1}$ be two Banach spaces and
$V$ a vector space with $X_{0},X_{1}\subseteq V$. Moreover, let
$F_{0},G_{0}:X_{0}\to X_{0}$ and $F_{1},G_{1}:X_{1}\to X_{1}$ be
Lipschitz-continuous mappings with $|F_{0}|_{\mathrm{Lip}}\cdot|G_{0}|_{\mathrm{Lip}}<1$
and $|F_{1}|_{\mathrm{Lip}}\cdot|G_{1}|_{\mathrm{Lip}}<1.$ Then for
$i\in\{0,1\}$ the relation $(F_{i}^{-1}+G_{i})^{-1}$ is a Lipschitz-continuous
mapping with domain equal to $X_{i}.$ Furthermore, if $F_{0}|_{X_{0}\cap X_{1}}=F_{1}|_{X_{0}\cap X_{1}}$
and $G_{0}|_{X_{0}\cap X_{1}}=G_{1}|_{X_{0}\cap X_{1}},$ then 
\[
(F_{0}^{-1}+G_{0})^{-1}|_{X_{0}\cap X_{1}}=(F_{1}^{-1}+G_{1})^{-1}|_{X_{0}\cap X_{1}}.
\]
\end{prop}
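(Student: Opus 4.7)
The plan is to reformulate the equation $(x,y) \in F_i^{-1} + G_i$ as a Banach fixed-point problem. Unfolding the relational definitions, this inclusion is equivalent to the existence of $z \in X_i$ with $F_i(z) = x$ and $y = z + G_i(F_i(z))$, and for fixed $y \in X_i$ such a $z$ is precisely a fixed point of
\[
T_{i,y} \colon X_i \to X_i, \quad z \mapsto y - G_i(F_i(z)).
\]
The Lipschitz constant of $T_{i,y}$ is at most $|F_i|_{\mathrm{Lip}} \cdot |G_i|_{\mathrm{Lip}} < 1$, so Banach's fixed point theorem supplies a unique $z_i(y) \in X_i$. Setting $x_i(y) := F_i(z_i(y))$ produces the unique preimage of $y$ under $F_i^{-1} + G_i$ (uniqueness of $z$ being equivalent to uniqueness of $x$ here), so $(F_i^{-1}+G_i)^{-1}$ is a well-defined mapping with domain $X_i$. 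Subtracting the fixed-point identities for two right-hand sides $y_1, y_2 \in X_i$ gives
\[
|z_i(y_1) - z_i(y_2)|_{X_i} \leq \bigl(1 - |F_i|_{\mathrm{Lip}} |G_i|_{\mathrm{Lip}}\bigr)^{-1}|y_1 - y_2|_{X_i},
\]
so composing with $F_i$ produces Lipschitz continuity of $(F_i^{-1}+G_i)^{-1}$ with constant $|F_i|_{\mathrm{Lip}}\bigl(1 - |F_i|_{\mathrm{Lip}} |G_i|_{\mathrm{Lip}}\bigr)^{-1}$.

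For the consistency claim, I would take $y \in X_0 \cap X_1$ and run the Picard iteration $z^{(0)} := 0$, $z^{(n+1)} := y - G_0(F_0(z^{(n)})) = y - G_1(F_1(z^{(n)}))$. The hypothesis $F_0|_{X_0 \cap X_1} = F_1|_{X_0 \cap X_1}$ forces this common value into $X_0 \cap X_1$ (it belongs to each of $X_0, X_1$ as an element of $V$), and the same holds for $G_i$. Hence the iteration stays in $X_0 \cap X_1$ and produces the same sequence of elements of $V$ regardless of which index is used. Viewed in $X_0$ it converges to the fixed point $z_0$ of $T_{0,y}$, and viewed in $X_1$ it converges to the fixed point $z_1$ of $T_{1,y}$. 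Once $z_0 = z_1$ in $V$ has been established, the common value lies in $X_0 \cap X_1$, so $F_0(z_0) = F_1(z_0)$ by the agreement hypothesis, which is exactly the asserted equality $(F_0^{-1}+G_0)^{-1}(y) = (F_1^{-1}+G_1)^{-1}(y)$ in $V$.

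The principal obstacle is precisely the identification $z_0 = z_1$ in $V$: limits in two different Banach spaces of a single sequence in a bare vector space $V$ need not agree in $V$. The argument will rely on what is tacitly available in the intended application with $X_i = H_{\rho_i,0}(\mathbb{R};H)$ and $V = L_{2,\mathrm{loc}}(\mathbb{R};H)$, namely that $V$ carries a Hausdorff vector topology for which both inclusions $X_0, X_1 \hookrightarrow V$ are continuous. Under this compatibility, the two $X_i$-limits of the common sequence $(z^{(n)})$ coincide with its limit in $V$, and Hausdorffness of $V$ forces $z_0 = z_1$, closing the argument.
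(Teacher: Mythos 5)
Your proof is correct and follows essentially the same route as the paper's: both reduce $(F_i^{-1}+G_i)^{-1}$ to a Banach fixed-point problem with contraction constant $|F_i|_{\mathrm{Lip}}|G_i|_{\mathrm{Lip}}$ (you iterate in the auxiliary variable $z$, the paper directly in the output variable via $y=F_i(x-G_i(y))$, an immaterial difference yielding the same Lipschitz bound), and both prove the consistency statement by running the Picard iteration from $0$ and checking inductively that it stays in $X_0\cap X_1$. The one point where you go beyond the paper is the identification of the $X_0$-limit with the $X_1$-limit of that common sequence: the paper silently equates them, whereas you correctly flag that this requires the embeddings $X_i\hookrightarrow V$ to be continuous into a common Hausdorff vector topology on $V$ --- a hypothesis absent from the statement but available in the intended application $V=L_{2,\mathrm{loc}}(\mathbb{R};H)$ --- so your version is, if anything, the more careful one.
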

\begin{proof}
Let $i\in\{0,1\}$. For $x,y\in X_{i}$ we observe that 
\begin{align}
(x,y)\in(F_{i}^{-1}+G_{i})^{-1} & \Leftrightarrow(y,x)\in F_{i}^{-1}+G_{i}\nonumber \\
 & \Leftrightarrow(y,x-G_{i}(y))\in F_{i}^{-1}\nonumber \\
 & \Leftrightarrow y=F_{i}(x-G_{i}(y)).\label{eq:fixed_point}
\end{align}
Since for each $x\in X_{i}$ the mapping $y\mapsto F_{i}(x-G_{i}(y))$
has a unique fixed point, according to the contraction mapping theorem,
we obtain that $(F_{i}^{-1}+G_{i})^{-1}$ is a mapping defined on
the whole space $X_{i}.$ Moreover, using \prettyref{eq:fixed_point},
we estimate for $(x_{0},y_{0}),(x_{1},y_{1})\in(F_{i}^{-1}+G_{i})^{-1}$
\begin{align*}
|y_{0}-y_{1}|_{X_{i}} & =|F_{i}(x_{0}-G_{i}(y_{0}))-F_{i}(x_{1}-G_{i}(y_{1}))|_{X_{i}}\\
 & \leq|F_{i}|_{\mathrm{Lip}}\left(|x_{0}-x_{1}|_{X_{i}}+|G_{i}(y_{0})-G_{i}(y_{1})|_{X_{i}}\right)\\
 & \leq|F_{i}|_{\mathrm{Lip}}|x_{0}-x_{1}|_{X_{i}}+|F_{i}|_{\mathrm{Lip}}|G_{i}|_{\mathrm{Lip}}|y_{0}-y_{1}|_{X_{i}},
\end{align*}
and thus 
\[
|y_{0}-y_{1}|_{X_{i}}\leq\frac{|F_{i}|_{\mathrm{Lip}}}{1-|F_{i}|_{\mathrm{Lip}}|G_{i}|_{\mathrm{Lip}}}|x_{0}-x_{1}|_{X_{i}},
\]
which proves the Lipschitz-continuity of $(F_{i}^{-1}+G_{i})^{-1}.$
Now assume that $F_{0}|_{X_{0}\cap X_{1}}=F_{1}|_{X_{0}\cap X_{1}}$
and $G_{0}|_{X_{0}\cap X_{1}}=G_{1}|_{X_{0}\cap X_{1}}$ and let $x\in X_{0}\cap X_{1}.$
Set $y_{0}\coloneqq0\in X_{0}\cap X_{1}$. For $n\in\mathbb{N}$ we
define 
\[
y_{n+1}\coloneqq F_{0}(x-G_{0}(y_{n})).
\]
Noting that for $z\in X_{0}\cap X_{1}$ we have $G_{0}(z)=G_{1}(z)\in X_{0}\cap X_{1}$
and $F_{0}(x-G_{0}(z))=F_{1}(x-G_{1}(z))\in X_{0}\cap X_{1},$ we
can show inductively that $y_{n}\in X_{0}\cap X_{1}$ for every $n\in\mathbb{N}$.
Moreover, by the contraction mapping theorem and \prettyref{eq:fixed_point}
we get that 
\begin{align*}
(F_{0}^{-1}+G_{0})^{-1}(x) & =\lim_{n\to\infty}y_{n+1}\\
 & =\lim_{n\to\infty}F_{0}(x-G_{0}(y_{n}))\\
 & =\lim_{n\to\infty}F_{1}(x-G_{1}(y_{n}))\\
 & =(F_{1}^{-1}+G_{1})^{-1}(x).\tag*{\qedhere}
\end{align*}
\end{proof}
\begin{lem}
\label{lem:embeddings} Let $\nu\geq\rho>0,$ $a\in\mathbb{R}$ and
$u\in H_{\rho,0}(\mathbb{R};H)$ such that the support $\spt u\subseteq[a,\infty[$.
Then $u\in H_{\nu,0}(\mathbb{R};H).$ Moreover, if $u\in H_{\rho,1}(\mathbb{R};H)$
then $u\in H_{\nu,1}(\mathbb{R};H)$ with 
\[
\partial_{0,\rho}u=\partial_{0,\nu}u.
\]
\end{lem}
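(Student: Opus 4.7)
The plan is to handle the two claims separately, each by a short direct argument exploiting the weighted structure.

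For the first claim, I would use the support condition together with the elementary comparison
\[
e^{-2\nu t}=e^{-2(\nu-\rho)t}\,e^{-2\rho t}\leq e^{-2(\nu-\rho)a}\,e^{-2\rho t}\qquad (t\geq a),
\]
which is valid because $\nu-\rho\geq 0$ makes $t\mapsto e^{-2(\nu-\rho)t}$ non-increasing. Integrating $|u(t)|^{2}$ against these two measures over $[a,\infty[$ yields
\[
|u|_{H_{\nu,0}(\mathbb{R};H)}\leq e^{-(\nu-\rho)a}\,|u|_{H_{\rho,0}(\mathbb{R};H)},
\]
and hence $u\in H_{\nu,0}(\mathbb{R};H)$.

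For the second claim, my strategy is to invoke the difference-quotient characterisation of \prettyref{prop:chara_diff} twice. Given $u\in H_{\rho,1}(\mathbb{R};H)$, the family $D_{h}u\coloneqq h^{-1}(\tau_{h}u-u)$ converges to $\partial_{0,\rho}u$ in $H_{\rho,0}(\mathbb{R};H)$ as $h\to 0+$, and is in particular $H_{\rho,0}$-bounded for $h\in\,]0,1]$. Each $D_{h}u$ has support in $[a-h,\infty[\,\subseteq[a-1,\infty[$, so applying the first part of the lemma with threshold $a-1$ to each $D_{h}u$ gives the uniform bound
\[
|D_{h}u|_{H_{\nu,0}(\mathbb{R};H)}\leq e^{-(\nu-\rho)(a-1)}\,|D_{h}u|_{H_{\rho,0}(\mathbb{R};H)}\qquad(h\in\,]0,1]).
\]
Thus $(D_{h}u)_{h\in]0,1]}$ is bounded in $H_{\nu,0}(\mathbb{R};H)$, and \prettyref{prop:chara_diff} yields $u\in H_{\nu,1}(\mathbb{R};H)$ together with $D_{h}u\to\partial_{0,\nu}u$ in $H_{\nu,0}(\mathbb{R};H)$.

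To identify the two derivatives, I would extract a sequence $h_{n}\to 0+$ along which $D_{h_{n}}u\to\partial_{0,\rho}u$ pointwise almost everywhere, and then a further subsequence along which the same functions converge pointwise a.e.\ to $\partial_{0,\nu}u$. Uniqueness of pointwise limits in $H$ forces $\partial_{0,\rho}u=\partial_{0,\nu}u$ almost everywhere, hence as elements of $L_{2,\mathrm{loc}}(\mathbb{R};H)$. There is no real obstacle in this argument; the only bookkeeping point is ensuring uniformity of the support of $D_{h}u$ as $h$ varies, which is why I restrict to $h\in\,]0,1]$ and shift the threshold from $a$ to $a-1$ when invoking the first part.
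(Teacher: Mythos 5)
Your proof is correct; the first half coincides with the paper's own estimate, but for the second half you take a genuinely different route. The paper argues by duality: it tests $u$ against $\partial_{0,\nu}^{\ast}\phi$ for $\phi\in C_{c}^{\infty}(\mathbb{R};H)$, transfers the weight via the substitution $\psi(t)=e^{2(\rho-\nu)t}\phi(t)$, deduces from the resulting identity (applied to $\phi$ supported in $]-\infty,a]$) that $\spt\partial_{0,\rho}u\subseteq[a,\infty[$ -- so that $\partial_{0,\rho}u\in H_{\nu,0}(\mathbb{R};H)$ by the first part -- and then concludes using that $C_{c}^{\infty}(\mathbb{R};H)$ is a core for $\partial_{0,\nu}^{\ast}$, which yields membership in $H_{\nu,1}(\mathbb{R};H)$ and the equality of the derivatives in one stroke. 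You instead run the difference-quotient characterisation of \prettyref{prop:chara_diff} in both spaces: the uniform localisation of the supports of the quotients $D_{h}u$ in $[a-1,\infty[$ for $h\in\,]0,1]$ lets you transport their $H_{\rho,0}$-boundedness to $H_{\nu,0}$, which gives $u\in H_{\nu,1}(\mathbb{R};H)$ directly, and you then identify the two derivatives by extracting almost-everywhere convergent subsequences (legitimate, since both weighted norms are $L_{2}$-norms with respect to measures equivalent to Lebesgue measure). Your route avoids having to prove the support statement for $\partial_{0,\rho}u$ separately and reuses a tool already established in the paper; the paper's route is slightly shorter on the identification step and records the preservation of half-line supports under $\partial_{0,\rho}$ as a useful by-product. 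Both arguments are complete.
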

\begin{proof}
We estimate 
\begin{equation}
\intop_{\mathbb{R}}\left|u(t)\right|^{2}e^{-2\nu t}\mbox{ d}t=\intop_{a}^{\infty}|u(t)|^{2}e^{-2\nu t}\mbox{ d}t\leq|u|_{H_{\rho,0}(\mathbb{R};H)}^{2}e^{2(\rho-\nu)a},\label{eq:rho_nu}
\end{equation}
which yields the first assertion. Assume now that $u\in H_{\rho,1}(\mathbb{R};H).$
Let $\phi\in C_{c}^{\infty}(\mathbb{R};H)$ and define $\psi(t)\coloneqq e^{2(\rho-\nu)t}\phi(t)$
for $t\in\mathbb{R}.$ Then clearly $\psi\in C_{c}^{\infty}(\mathbb{R};H)$
and we compute
\begin{align*}
\langle u|\partial_{0,\nu}^{\ast}\phi\rangle_{H_{\nu,0}(\mathbb{R};H)} & =\intop_{a}^{\infty}\langle u(t)|-\phi'(t)+2\nu\phi(t)\rangle e^{-2\nu t}\mbox{ d}t\\
 & =\intop_{a}^{\infty}\langle u(t)|-\psi'(t)+2\rho\psi(t)\rangle e^{-2\rho t}\mbox{ d}t\\
 & =\langle u|\partial_{0,\rho}^{\ast}\psi\rangle_{H_{\rho,0}(\mathbb{R};H)}\\
 & =\langle\partial_{0,\rho}u|\psi\rangle_{H_{\rho,0}(\mathbb{R};H)}\\
 & =\intop_{\mathbb{R}}\left\langle \left.\left(\partial_{0,\rho}u\right)(t)\right|\phi(t)\right\rangle e^{-2\nu t}\mbox{ d}t.
\end{align*}
If $\phi$ is supported on $]-\infty,a]$, the latter yields that
\[
\intop_{\mathbb{R}}\left\langle \left.\left(\partial_{0,\rho}u\right)(t)\right|\phi(t)\right\rangle e^{-2\nu t}\mbox{ d}t=0.
\]
Thus, the fundamental lemma of variational calculus implies $\spt\partial_{0,\rho}u\subseteq[a,\infty[$
and thus, $\partial_{0,\rho}u\in H_{\nu,0}(\mathbb{R};H)$ by \prettyref{eq:rho_nu}.
Hence, by the computation above 
\[
\langle u|\partial_{0,\nu}^{\ast}\phi\rangle_{H_{\nu,0}(\mathbb{R};H)}=\langle\partial_{0,\rho}u|\phi\rangle_{H_{\nu,0}(\mathbb{R};H)}
\]
for each $\phi\in C_{c}^{\infty}(\mathbb{R};H)$, which yields the
assertion, since $C_{c}^{\infty}(\mathbb{R};H)$ is a core for $\partial_{0,\nu}^{\ast}.$ 
\end{proof}
We are now able to prove the independence of the parameter $\rho$
of the solution operator associated with \prettyref{eq:substitute_prob}. 
\begin{lem}
\label{lem:IndependenceParameter}Let $\delta>0$ and choose $\rho_{0}>0$
according to \prettyref{thm:sol_th_substitute_prob}. Let $\nu\geq\rho\geq\rho_{0}$
and $f\in H_{\rho,0}(\mathbb{R};H)\cap H_{\nu,0}(\mathbb{R};H)$.
Then
\[
\left(\overline{\partial_{0,\rho}M_{0}(\mathrm{m})-M_{0}'(\mathrm{m})+\delta+A_{\rho}}\right)^{-1}(f)=\left(\overline{\partial_{0,\nu}M_{0}(\mathrm{m})-M_{0}'(\mathrm{m})+\delta+A_{\nu}}\right)^{-1}(f).
\]
\end{lem}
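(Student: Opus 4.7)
My strategy is to use a Yosida approximation to avoid the closure in the definition of the solution, combined with causality (\prettyref{prop:causality}) and \prettyref{lem:embeddings} to identify the two solutions on data with support bounded below. For $\lambda>0$ let $u_{\rho,\lambda}\in H_{\rho,1}(\mathbb{R};H)$ denote the unique solution of the approximated equation
\[
(\partial_{0,\rho} M_{0}(\mathrm{m}) - M_{0}'(\mathrm{m}) + \delta + A_{\rho,\lambda})u_{\rho,\lambda} = f,
\]
whose existence and Lipschitz-continuous solution operator $S_{\rho,\lambda}$ follow from \prettyref{cor:pert_Lip}, \prettyref{lem:max_mon_material-1} and \prettyref{lem:diff_sol}; the argument of \prettyref{prop:existence} combined with \prettyref{prop:pert} yields $u_{\rho,\lambda}\to u_{\rho}:=S_{\rho}(f)$ in $H_{\rho,0}(\mathbb{R};H)$ as $\lambda\to 0+$, and the proof of \prettyref{prop:causality} applies verbatim (using $(0,0)\in A_{\rho,\lambda}$) to give causality of $S_{\rho,\lambda}$.

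Since $u_{\rho,\lambda}$ need not lie in $H_{\nu,0}(\mathbb{R};H)$ for arbitrary $f$, I first truncate the data from the left: for $n\in\mathbb{N}$, put $f_{n}:=\chi_{[-n,\infty[}(\mathrm{m})f$, which belongs to $H_{\rho,0}(\mathbb{R};H)\cap H_{\nu,0}(\mathbb{R};H)$ and converges to $f$ in both spaces. Setting $u_{\rho,\lambda}^{n}:=S_{\rho,\lambda}(f_{n})$, causality together with $A_{\rho,\lambda}(0)=0$ forces $\supp u_{\rho,\lambda}^{n}\subseteq[-n,\infty[$, whence \prettyref{lem:embeddings} gives $u_{\rho,\lambda}^{n}\in H_{\nu,1}(\mathbb{R};H)$ with $\partial_{0,\rho}u_{\rho,\lambda}^{n}=\partial_{0,\nu}u_{\rho,\lambda}^{n}$. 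The operators $M_{0}(\mathrm{m})$, $M_{0}'(\mathrm{m})$ and $A_{\rho,\lambda}$ are pointwise in time and therefore act identically on functions lying in the intersection of the two weighted spaces; hence the approximated equation in $H_{\rho,0}(\mathbb{R};H)$ lifts verbatim to $H_{\nu,0}(\mathbb{R};H)$, and uniqueness in the $\nu$-setting gives $u_{\rho,\lambda}^{n}=u_{\nu,\lambda}^{n}$. Passing $\lambda\to 0+$ and using the continuous embeddings $H_{\rho,0}(\mathbb{R};H),H_{\nu,0}(\mathbb{R};H)\hookrightarrow L_{2,\mathrm{loc}}(\mathbb{R};H)$ transports this to $S_{\rho}(f_{n})=S_{\nu}(f_{n})$ in $L_{2,\mathrm{loc}}(\mathbb{R};H)$; finally $n\to\infty$ together with the Lipschitz continuity of $S_{\rho}$, $S_{\nu}$ furnished by \prettyref{thm:sol_th_substitute_prob} yields the claimed identity.

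The main obstacle is the equation-transport step: one must verify term by term that the identity $(\partial_{0,\rho} M_{0}(\mathrm{m}) - M_{0}'(\mathrm{m}) + \delta + A_{\rho,\lambda})u_{\rho,\lambda}^{n} = f_{n}$ in $H_{\rho,0}(\mathbb{R};H)$ translates into the analogous identity in $H_{\nu,0}(\mathbb{R};H)$. \prettyref{lem:embeddings} is exactly the tool that handles the only non-pointwise piece, namely the time derivative, provided $u_{\rho,\lambda}^{n}$ has support bounded below; this in turn relies on causality applied to the truncated data. A formally cleaner alternative route would be to apply \prettyref{prop:abstract} with $X_{0}=H_{\rho,0}(\mathbb{R};H)$, $X_{1}=H_{\nu,0}(\mathbb{R};H)$ and a suitable Lipschitz decomposition of the approximated operator, but matching the contractivity constraint $|F_i|_{\mathrm{Lip}}\cdot|G_i|_{\mathrm{Lip}}<1$ there is delicate for arbitrary $\delta>0$, while the present causality-plus-truncation route is robust.
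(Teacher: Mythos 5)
Your overall skeleton --- causality forces a solution with data supported on $[a,\infty[$ to be supported there as well, \prettyref{lem:embeddings} then transports the only non-pointwise term (the time derivative) from the $\rho$- to the $\nu$-setting, uniqueness identifies the two solutions, and a density/continuity argument finishes --- is exactly the paper's. The paper, however, runs this on $f\in C_{c}^{\infty}(\mathbb{R};H)$, where \prettyref{prop:existence} supplies an honest solution of the \emph{non-closed} relation, and then uses that $C_{c}^{\infty}(\mathbb{R};H)$ is dense in $H_{\rho,0}(\mathbb{R};H)\cap H_{\nu,0}(\mathbb{R};H)$ with respect to both norms. You instead truncate, $f_{n}=\chi_{[-n,\infty[}(\mathrm{m})f$, which produces data lying in both spaces but not in $H_{\rho,1}(\mathbb{R};H)$, and to obtain honest solutions for such data you insert a Yosida layer. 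That layer is where the gap sits.

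Concretely: (i) \prettyref{lem:diff_sol} yields $u_{\rho,\lambda}\in H_{\rho,1}(\mathbb{R};H)$ only for $f\in H_{\rho,1}(\mathbb{R};H)$; your $f_{n}$ has a jump at $-n$ and is not in that space, so the hypothesis of \prettyref{lem:embeddings} is not available for $u_{\rho,\lambda}^{n}$ itself. This point is repairable: the equation-transport only needs $M_{0}(\mathrm{m})u_{\rho,\lambda}^{n}\in H_{\rho,1}(\mathbb{R};H)$ with support bounded below, which holds since $u_{\rho,\lambda}^{n}\in D(\partial_{0,\rho}M_{0}(\mathrm{m}))$, and one applies \prettyref{lem:embeddings} to $M_{0}(\mathrm{m})u_{\rho,\lambda}^{n}$, as the paper implicitly does. (ii) The genuine gap is the claim $u_{\rho,\lambda}^{n}\to S_{\rho}(f_{n})$ as $\lambda\to0+$. \prettyref{prop:pert} characterizes when the non-closed relation is solvable (namely iff $\left(A_{\rho,\lambda}(u_{\lambda})\right)_{\lambda}$ is bounded); it says nothing about convergence of $u_{\lambda}$, and the boundedness established in \prettyref{prop:existence} uses $|\partial_{0,\rho}f|_{H_{\rho,0}(\mathbb{R};H)}$ and is therefore unavailable for $f_{n}$. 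For such $f_{n}$ the pair $(u,f_{n})$ may lie only in the closure of the relation, in which case \prettyref{prop:pert} even tells you that $\left(A_{\rho,\lambda}(u_{\lambda}^{n})\right)_{\lambda}$ is unbounded. The convergence you need is in fact true, but it requires the standard Br\'ezis-type argument: test against approximating pairs $(u_{k},f_{k})$ in the non-closed relation and use $\left((1+\lambda A_{\rho})^{-1}u_{\lambda},A_{\rho,\lambda}(u_{\lambda})\right)\in A_{\rho}$ to absorb the Yosida term up to an error of order $\lambda$. This argument appears nowhere in the paper and does not follow from the results you cite; you must either supply it or revert to the smooth-data approximation, which renders the entire Yosida detour unnecessary.
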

\begin{proof}
We begin to show the assertion for $f\in C_{c}^{\infty}(\mathbb{R};H)\subseteq H_{\rho,0}(\mathbb{R};H)\cap H_{\nu,0}(\mathbb{R};H)$.
Then there exists $a\in\mathbb{R}$ such that $\chi_{[a,\infty[}(m)f=f.$
Due to Proposition \ref{prop:existence} there exist $u_{\nu}\in D(\partial_{0,\nu}M_{0}(\mathrm{m}))$
and $u_{\rho}\in D(\partial_{0,\rho}M_{0}(\mathrm{m}))$ such that
\[
(u_{\rho},f)\in\partial_{0,\rho}M_{0}(\mathrm{m})-M_{0}'(\mathrm{m})+\delta+A_{\rho}\mbox{ and }(u_{\nu},f)\in\partial_{0,\nu}M_{0}(\mathrm{m})-M_{0}'(\mathrm{m})+\delta+A_{\nu}.
\]
Furthermore by using \prettyref{prop:causality} and $(0,0)\in A$
we get that $\spt u_{\rho}\subseteq[a,\infty[$ and hence $u_{\rho}\in H_{\nu,0}(\mathbb{R};H)$
and 
\[
\partial_{\text{0,\ensuremath{\rho}}}M_{0}(\mathrm{m})u_{\rho}=\partial_{0,\nu}M_{0}(\mathrm{m})u_{\rho}
\]
by \prettyref{lem:embeddings}. Thus, we get that 
\[
(u_{\rho},f)\in\partial_{0,\nu}M_{0}(\mathrm{m})-M_{0}'(\mathrm{m})+\delta+A_{\nu}
\]
and by the uniqueness of the solution, this yields $u_{\rho}=u_{\nu}.$
Now let $f\in H_{\rho,0}(\mathbb{R};H)\cap H_{\nu,0}(\mathbb{R};H)$.
Then there exists a sequence $(f_{n})_{n\in\mathbb{N}}\in C_{c}^{\infty}(\mathbb{R};H)^{\mathbb{N}}$
which converges to $f$ in both spaces $H_{\rho,0}(\mathbb{R};H)$
and $H_{\nu,0}(\mathbb{R};H)$ (cf. \cite[Lemma 3.5]{Trostorff2013_stability}).
Due to the continuity of the solution operators and what we have shown
above, we get that 
\begin{align*}
\left(\overline{\partial_{0,\nu}M_{0}(\mathrm{m})-M_{0}'(\mathrm{m})+\delta+A_{\nu}}\right)^{-1}(f) & =\lim_{n\to\infty}\left(\partial_{0,\nu}M_{0}(\mathrm{m})-M_{0}'(\mathrm{m})+\delta+A_{\nu}\right)^{-1}(f_{n})\\
 & =\lim_{n\to\infty}\left(\partial_{0,\rho}M_{0}(\mathrm{m})-M_{0}'(\mathrm{m})+\delta+A_{\rho}\right)^{-1}(f_{n})\\
 & =\left(\overline{\partial_{0,\rho}M_{0}(\mathrm{m})-M_{0}'(\mathrm{m})+\delta+A_{\rho}}\right)^{-1}(f).\tag*{\qedhere}
\end{align*}

\end{proof}
Finally, we prove the independence of the parameter $\rho$ of the
solution operator corresponding to our original problem \prettyref{eq:prob}.
\begin{prop}
There exists $\rho_{0}>0$ such that for every $\nu\geq\rho\geq\rho_{0}$
and $f\in H_{\rho,0}(\mathbb{R};H)\cap H_{\nu,0}(\mathbb{R};H$) we
have that 
\[
\left(\overline{\partial_{0,\rho}M_{0}(\mathrm{m})+M_{1}(\mathrm{m})+A_{\rho}}\right)^{-1}(f)=\left(\overline{\partial_{0,\nu}M_{0}(\mathrm{m})+M_{1}(\mathrm{m})+A_{\nu}}\right)^{-1}(f).
\]
\end{prop}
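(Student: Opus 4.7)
The plan is to reduce the statement to the substitute problem treated in \prettyref{lem:IndependenceParameter} by decomposing the relation so that the difference between the actual equation and the substitute equation becomes a bounded, parameter-free perturbation.

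Fix $\delta>0$ and choose $\rho_0>0$ large enough so that \prettyref{thm:well-posedness}, \prettyref{thm:sol_th_substitute_prob} and \prettyref{lem:IndependenceParameter} all apply for $\rho\geq\rho_0$. Set
\[ S_\rho:=\overline{\partial_{0,\rho}M_0(\mathrm{m})-M_0'(\mathrm{m})+\delta+A_\rho}, \qquad B:=M_1(\mathrm{m})+M_0'(\mathrm{m})-\delta. \]
The operator $B$ is a bounded multiplication operator whose pointwise action is the same on every space $H_{\rho,0}(\mathbb{R};H)$. Since $B$ is continuous, adding $B$ commutes with taking the closure of a relation; a short computation with approximating sequences then yields the identity
\[ R_\rho:=\overline{\partial_{0,\rho}M_0(\mathrm{m})+M_1(\mathrm{m})+A_\rho}=S_\rho+B \]
as relations on $H_{\rho,0}(\mathbb{R};H)$, and similarly at level $\nu$. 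In particular, $u=R_\rho^{-1}(f)$ if and only if $u\in H_{\rho,0}(\mathbb{R};H)$ satisfies the fixed-point identity $u=S_\rho^{-1}(f-Bu)$, with the analogous characterization for $\nu$.

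Next I would establish the claim for $f\in C_c^\infty(\mathbb{R};H)$. Fix $a\in\mathbb{R}$ with $\spt f\subseteq[a,\infty[$ and set $u_\rho:=R_\rho^{-1}(f)$. Since $(0,0)\in A$ gives $R_\rho^{-1}(0)=0$, \prettyref{prop:causality} forces $\spt u_\rho\subseteq[a,\infty[$, whence \prettyref{lem:embeddings} yields $u_\rho\in H_{\nu,0}(\mathbb{R};H)$; the pointwise multiplier $B$ preserves supports, so $Bu_\rho\in H_{\nu,0}(\mathbb{R};H)$ as well. Consequently $f-Bu_\rho\in H_{\rho,0}(\mathbb{R};H)\cap H_{\nu,0}(\mathbb{R};H)$, and \prettyref{lem:IndependenceParameter} provides
\[ S_\rho^{-1}(f-Bu_\rho)=S_\nu^{-1}(f-Bu_\rho) \]
as elements of $L_{2,\mathrm{loc}}(\mathbb{R};H)$. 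Combining with the fixed-point identity of the previous paragraph, $u_\rho\in H_{\nu,0}(\mathbb{R};H)$ solves $u_\rho=S_\nu^{-1}(f-Bu_\rho)$, i.e.\ $(u_\rho,f)\in R_\nu$, and uniqueness (\prettyref{thm:well-posedness}) forces $u_\rho=R_\nu^{-1}(f)$.

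To extend to arbitrary $f\in H_{\rho,0}(\mathbb{R};H)\cap H_{\nu,0}(\mathbb{R};H)$ I would, as in the proof of \prettyref{lem:IndependenceParameter}, choose a sequence $(f_n)$ in $C_c^\infty(\mathbb{R};H)$ converging to $f$ in both weighted spaces simultaneously (cf.\ \cite[Lemma 3.5]{Trostorff2013_stability}), invoke the Lipschitz continuity of $R_\rho^{-1}$ and $R_\nu^{-1}$ from \prettyref{thm:well-posedness}, and pass to the limit in $L_{2,\mathrm{loc}}(\mathbb{R};H)$. The main obstacle I expect is the clean identification $R_\rho=S_\rho+B$ at the level of closed relations, since a naive application of \prettyref{prop:abstract} fails because one cannot arrange $|S_\rho^{-1}|_{\mathrm{Lip}}|B|_{\mathrm{Lip}}<1$; once the identification is in hand, however, causality together with \prettyref{lem:IndependenceParameter} carries the rest of the argument essentially for free.
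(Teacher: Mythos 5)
Your argument is correct, but it takes a genuinely different route from the paper. The paper never leaves the fixed-point framework: it applies \prettyref{prop:abstract} twice, first with $F_{i}=\left(\overline{\partial_{0,\rho}M_{0}(\mathrm{m})-M_{0}'(\mathrm{m})+\delta+A_{\rho}}\right)^{-1}$ and $G_{i}=M_{1}(\mathrm{m})+M_{0}'(\mathrm{m})$ (which is why it needs $\delta>2(|M_{1}|_{\infty}+|M_{0}|_{\mathrm{Lip}})$, so that $|F_{i}|_{\mathrm{Lip}}|G_{i}|_{\mathrm{Lip}}\leq\frac{2}{\delta}(|M_{1}|_{\infty}+|M_{0}|_{\mathrm{Lip}})<1$), obtaining $\rho$-independence for the shifted problem with $M_{1}(\mathrm{m})+\delta$, and then a second time with $G_{i}=-\delta$, using that the shifted resolvent has Lipschitz constant strictly below $\frac{1}{\delta}$. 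You correctly identify why a single application with $G=M_{1}(\mathrm{m})+M_{0}'(\mathrm{m})-\delta$ cannot work, but instead of splitting the perturbation you bypass \prettyref{prop:abstract} altogether: you use causality (\prettyref{prop:causality}) and $(0,0)\in A$ to get $\spt u_{\rho}\subseteq[a,\infty[$ for compactly supported data, push $u_{\rho}$ and $Bu_{\rho}$ into $H_{\nu,0}(\mathbb{R};H)$ via \prettyref{lem:embeddings}, and feed $f-Bu_{\rho}$ into \prettyref{lem:IndependenceParameter}; this is in fact the same support-propagation mechanism the paper uses \emph{inside} the proof of \prettyref{lem:IndependenceParameter}, extended to the full problem. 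What your route buys is that no smallness of Lipschitz constants is needed and the choice of $\delta$ is free; what it costs is the reliance on causality and on the identity $\overline{\partial_{0,\rho}M_{0}(\mathrm{m})+M_{1}(\mathrm{m})+A_{\rho}}=\overline{\partial_{0,\rho}M_{0}(\mathrm{m})-M_{0}'(\mathrm{m})+\delta+A_{\rho}}+B$, which you should justify with the one-line observation that adding an everywhere-defined continuous map commutes with taking closures of relations (the paper uses the same identity tacitly in the proof of \prettyref{thm:well-posedness}, so this is harmless). Two small points to tidy up: include the $\rho_{0}$ from \prettyref{prop:causality} in your initial choice of $\rho_{0}$, and note that for the density step the approximating sequence must converge in both weighted spaces simultaneously, exactly as in the paper's proof of \prettyref{lem:IndependenceParameter}.
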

\begin{proof}
Let $\delta>2(|M_{1}|_{\infty}+|M_{0}|_{\mathrm{Lip}})$. The proof
will be done in two steps. 

\begin{enumerate}[(a)] 

\item According to \prettyref{thm:sol_th_substitute_prob} there
exists $\rho_{1}>0$ such that for every $\rho\geq\rho_{1}$ the inverse
relation $\left(\overline{\partial_{0,\rho}M_{0}(\mathrm{m})-M_{0}'(\mathrm{m})+\delta+A_{\rho}}\right)^{-1}$
is a Lipschitz-continuous mapping with 
\[
\left|\left(\overline{\partial_{0,\rho}M_{0}(\mathrm{m})-M_{0}'(\mathrm{m})+\delta+A_{\rho}}\right)^{-1}\right|_{\mathrm{Lip}}\leq\frac{2}{\delta}.
\]
For $\nu\geq\rho\geq\rho_{1}$ we set $X_{0}\coloneqq H_{\rho,0}(\mathbb{R};H)$
and $X_{1}\coloneqq H_{\nu,0}(\mathbb{R};H)$. Moreover, we define
$F_{0}\coloneqq\left(\overline{\partial_{0,\rho}M_{0}(\mathrm{m})-M_{0}'(\mathrm{m})+\delta+A_{\rho}}\right)^{-1}$
and $F_{1}\coloneqq\left(\overline{\partial_{0,\nu}M_{0}(\mathrm{m})-M_{0}'(\mathrm{m})+\delta+A_{\nu}}\right)^{-1}$.
Then by \prettyref{lem:IndependenceParameter} we have that 
\[
F_{0}|_{X_{0}\cap X_{1}}=F_{1}|_{X_{0}\cap X_{1}}.
\]
Moreover, we set $G_{0}$ and $G_{1}$ as $M_{1}(\mathrm{m})+M_{0}'(\mathrm{m})$
interpreted as bounded linear operators in $X_{0}$ and $X_{1}$,
respectively. Then by definition 
\[
G_{0}|_{X_{0}\cap X_{1}}=G_{1}|_{X_{0}\cap X_{1}.}
\]
Furthermore, $|F_{i}|_{\mathrm{Lip}}\cdot|G_{i}|_{\mathrm{Lip}}\leq\frac{1}{2}$
for $i\in\{0,1\}$ and thus, by \prettyref{prop:abstract}
\begin{align*}
\left(\overline{\partial_{0,\rho}M_{0}(\mathrm{m})+M_{1}(\mathrm{m})+\delta+A_{\rho}}\right)^{-1}|_{X_{0}\cap X_{1}} & =\left(F_{0}^{-1}+G_{0}\right)^{-1}|_{X_{0}\cap X_{1}}\\
 & =(F_{1}^{-1}+G_{1})^{-1}|_{X_{0}\cap X_{1}}\\
 & =\left(\overline{\partial_{0,\nu}M_{0}(\mathrm{m})+M_{1}(\mathrm{m})+\delta+A_{\nu}}\right)^{-1}|_{X_{0}\cap X_{1}}.
\end{align*}

\item Let $\rho_{1}$ be as in (a). By \prettyref{thm:well-posedness}
there exists $\rho_{0}\geq\rho_{1}$ such that for every $\rho\geq\rho_{0}$
the inverse relation \foreignlanguage{english}{$\left(\overline{\partial_{0,\rho}M_{0}(\mathrm{m})+M_{1}(\mathrm{m})+\delta+A_{\rho}}\right)^{-1}$}is
a Lipschitz-continuous mapping with 
\[
\left|\left(\overline{\partial_{0,\rho}M_{0}(\mathrm{m})+M_{1}(\mathrm{m})+\delta+A_{\rho}}\right)^{-1}\right|_{\mathrm{Lip}}<\frac{1}{\delta}.
\]
Let $\nu\geq\rho\geq\rho_{0}$ and set $X_{0}\coloneqq H_{\rho,0}(\mathbb{R};H)$
and $X_{1}\coloneqq H_{\nu,0}(\mathbb{R};H)$. Moreover, we define
$F_{0}\coloneqq\left(\overline{\partial_{0,\rho}M_{0}(\mathrm{m})+M_{1}(\mathrm{m})+\delta+A_{\rho}}\right)^{-1}$
and $F_{1}\coloneqq\left(\overline{\partial_{0,\nu}M_{0}(\mathrm{m})+M_{1}(\mathrm{m})+\delta+A_{\nu}}\right)^{-1}$
and by (a) we have 
\[
F_{0}|_{X_{0}\cap X_{1}}=F_{1}|_{X_{0}\cap X_{1}}.
\]
Defining $G_{0}$ and $G_{1}$ as $-\delta$, interpreted as bounded
linear operators in $X_{0}$ and $X_{1},$ respectively, we derive
the assertion by using \prettyref{prop:abstract}.\qedhere 

\end{enumerate}
\end{proof}

\section{Examples}

In this section we apply our solution theory to two concrete systems
out of the theory of plasticity. The first one, dealing with thermoplasticity,
couples the heat equation with the equations of plasticity, where
the stress tensor and the inelastic part of the strain tensor are
linked via a maximal monotone relation. The second example deals with
the equations of viscoplasticity, where the inelastic strain tensor
is given in terms of an internal variable, satisfying a differential
inclusion. Before we can state the equations, we have to define the
differential operators involved. Throughout, let $\Omega\subseteq\mathbb{R}^{3}$
be an arbitrary domain.
\begin{defn}
We define the operator $\grad_{c}$ (the gradient with Dirichlet-type
boundary conditions) as the closure of 
\begin{align*}
\grad|_{C_{c}^{\infty}(\Omega)}:C_{c}^{\infty}(\Omega)\subseteq L_{2}(\Omega) & \to L_{2}(\Omega)^{3}\\
\phi & \mapsto\left(\partial_{i}\phi\right)_{i\in\{1,2,3\}}
\end{align*}
and the operator $\dive_{c}$ (the divergence with Neumann-type boundary
conditions) as the closure of 
\begin{align*}
\dive|_{C_{c}^{\infty}(\Omega)^{3}}:C_{c}^{\infty}(\Omega)^{3}\subseteq L_{2}(\Omega)^{3} & \to L_{2}(\Omega)\\
(\psi_{i})_{i\in\{1,2,3\}} & \mapsto\sum_{i=1}^{3}\partial_{i}\psi_{i}.
\end{align*}
Using integration by parts one easily sees that 
\begin{align*}
\grad_{c} & \subseteq\left(-\dive_{c}\right)^{\ast},\\
\dive_{c} & \subseteq\left(-\grad_{c}\right)^{\ast}.
\end{align*}
We set $\grad\coloneqq\left(-\dive_{c}\right)^{\ast}$ (the distibutional
gradient with maximal domain in $L_{2}(\Omega)$) and $\dive\coloneqq\left(-\grad_{c}\right)^{\ast}$
(the distributional divergence with maximal domain in $L_{2}(\Omega)^{3}$).
Moreover, we define the matrix-valued symmetrized gradient and the
vector-valued divergence, by setting $\Grad_{c}$ to be the closure
of 
\begin{align*}
\Grad|_{C_{c}^{\infty}(\Omega)^{3}}:C_{c}^{\infty}(\Omega)^{3}\subseteq L_{2}(\Omega)^{3} & \to L_{2,\mathrm{sym}}(\Omega)^{3\times3}\\
\left(\psi_{i}\right)_{i\in\{1,2,3\}} & \mapsto\frac{1}{2}\left(\partial_{i}\psi_{j}+\partial_{j}\psi_{i}\right)_{i,j\in\{1,2,3\}}
\end{align*}
and by defining $\Dive_{c}$ as the closure of 
\begin{align*}
\Dive|_{C_{c,\mathrm{sym}}^{\infty}(\Omega)^{3\times3}}:C_{c,\mathrm{sym}}^{\infty}(\Omega)^{3\times3}\subseteq L_{2,\mathrm{sym}}(\Omega)^{3\times3} & \to L_{2}(\Omega)^{3}\\
\left(T_{ij}\right)_{i,j\in\{1,2,3\}} & \mapsto\left(\sum_{j=1}^{3}\partial_{j}T_{ij}\right)_{i\in\{1,2,3\}}.
\end{align*}
Here $L_{2,\mathrm{sym}}(\Omega)^{3\times3}$ denotes the space of
$L_{2}$ functions attaining values in the space of symmetric $3\times3$
matrices, equipped with the Frobenius inner product 
\[
\langle T|S\rangle_{L_{2,\mathrm{sym}}(\Omega)^{3\times3}}\coloneqq\intop_{\Omega}\trace\left(T(x)^{\ast}S(x)\right)\mbox{ d}x\quad(T,S\in L_{2,\mathrm{sym}}(\Omega)^{3\times3})
\]
and $C_{c,\mathrm{sym}}^{\infty}(\Omega)^{3\times3}$ denotes the
space of test functions with values in the space of symmetric $3\times3$
matrices. Like in the scalar-valued case we obtain 
\begin{align*}
\Grad_{c} & \subseteq\left(-\Dive_{c}\right)^{\ast}\eqqcolon\Grad,\\
\Dive_{c} & \subseteq\left(-\Grad_{c}\right)^{\ast}\eqqcolon\Dive.
\end{align*}

\end{defn}

\subsection{Thermoplasticity}

We denote by $u:\mathbb{R}\to L_{2}(\Omega)^{3}$ the displacement
field, by $\theta:\mathbb{R}\to L_{2}(\Omega)$ the temperature density
and by $T:\mathbb{R}\to L_{2,\mathrm{sym}}(\Omega)^{3\times3}$ the
stress tensor of a body $\Omega\subseteq\mathbb{R}^{3}.$ The equations
of thermoplasticity read as 
\begin{align}
\partial_{0,\rho}M\partial_{0,\rho}u-\Dive T & =f,\label{eq:elast}\\
\partial_{0,\rho}w\theta-\dive\kappa\grad\theta+\tau_{0}\trace\Grad\partial_{0,\rho}u & =g.\label{eq:heat}
\end{align}
Here $w:\mathbb{R}\to L_{\infty}(\Omega),$ modelling the time-dependent
mass density, is assumed to be bounded and Lipschitz-continuous, $w(t)$
is real-valued for each $t\in\mathbb{R}$ and $w$ is uniformly positive%
\footnote{This means that there exists a positive constant $c>0$ such that
$w(t)\geq c$ for each $t\in\mathbb{R}$. %
}. Likewise $M:\mathbb{R}\to L(L_{2}(\Omega)^{3})$ is a bounded, Lipschitz-continuous
function, $M(t)$ is selfadjoint for each $t\in\mathbb{R}$ and $M$
is uniformly strictly positive definite and $\tau_{0}>0$ is a numerical
parameter. The function $\kappa:\mathbb{R}\to L(L_{2}(\Omega)^{3})$
describes the (time-dependent) heat conductivity of $\Omega$ and
is assumed to be measurable, bounded and $\Re\kappa(t)=\frac{1}{2}\left(\kappa(t)+\kappa(t)^{\ast}\right)\geq\tilde{c}>0$
for every $t\in\mathbb{R}.$ The right hand sides $f:\mathbb{R}\to L_{2}(\Omega)^{3}$
and $g:\mathbb{R}\to L_{2}(\Omega)$ are given source terms. The operator
$\trace$ is defined by 
\begin{align*}
\trace:L_{2,\mathrm{sym}}(\Omega)^{3\times3} & \to L_{2}(\Omega)\\
\left(S_{ij}\right)_{i,j\in\{1,2,3\}} & \mapsto\sum_{i=1}^{3}S_{ii}.
\end{align*}
The stress tensor $T$ is coupled with the strain tensor $\Grad u$
via the constitutive relation%
\footnote{The adjoint of the operator $\trace$ is given by 
\[
\trace^{\ast}f=\left(\begin{array}{ccc}
f & 0 & 0\\
0 & f & 0\\
0 & 0 & f
\end{array}\right)\quad(f\in L_{2}(\Omega)).
\]
} 
\begin{equation}
T=C\left(\Grad u-\varepsilon_{p}\right)-c\trace^{\ast}\theta,\label{eq:Hooke}
\end{equation}
where $C:\mathbb{R}\to L\left(L_{2,\mathrm{sym}}(\Omega)^{3\times3}\right)$
is bounded, Lipschitz-continuous and uniformly strictly positive definite,
$C(t)$ is selfadjoint for every $t\in\mathbb{R}$ and $c>0$ is a
coupling parameter. The function $\varepsilon_{p}:\mathbb{R}\to L_{2,\mathrm{sym}}(\Omega)^{3\times3}$
denotes the inelastic part of the strain tensor and is linked to $T$
by 
\begin{equation}
(T,\partial_{0,\rho}\varepsilon_{p})\in\mathbb{I},\label{eq:plastic}
\end{equation}
where $\mathbb{I}\subseteq L_{2,\mathrm{sym}}(\Omega)^{3\times3}\oplus L_{2,\mathrm{sym}}(\Omega)^{3\times3}$
is a bounded maximal monotone relation with $(0,0)\in\mathbb{I}$
and for every element $S\in\mathbb{I}\left[L_{2,\mathrm{sym}}(\Omega)^{3\times3}\right]$
we have that $\trace S=0.$ This system was considered for the autonomous
case in \cite{Trostorff2012_NA} and earlier by \cite{Chelminski2006}
for the autonomous, quasi-static case. Following \cite{Trostorff2012_NA},
we rewrite the system in the following way: Define $v\coloneqq\partial_{0,\rho}u$
and $q\coloneqq c\tau_{0}^{-1}\kappa\grad\theta.$ Then \prettyref{eq:elast}
and \prettyref{eq:heat} can be written as 
\begin{align}
\partial_{0,\rho}Mv-\Dive T & =f,\nonumber \\
\partial_{0,\rho}c\tau_{0}^{-1}w\theta-\dive q+\trace c\Grad v & =c\tau_{0}^{-1}g.\label{eq:heat2}
\end{align}
Moreover, applying $\partial_{0,\rho}C^{-1}$ to \prettyref{eq:Hooke}
yields%
\footnote{Note that $C(t)$ is invertible for each $t\in\mathbb{R}$ and $C^{-1}:t\mapsto C(t)^{-1}$
is bounded, Lipschitz-continuous and uniformly strictly positive definite.%
} 
\begin{equation}
\partial_{0,\rho}C^{-1}T+\partial_{0,\rho}\varepsilon_{p}+\partial_{0,\rho}C^{-1}c\trace^{\ast}\theta=\Grad v,\label{eq:Hooke2}
\end{equation}
which gives 
\[
\partial_{0,\rho}\trace cC^{-1}T+\partial_{0,\rho}\trace cC^{-1}c\trace^{\ast}\theta=\trace c\Grad v,
\]
where we have used that $\partial_{0,\rho}\varepsilon_{p}\in\mathbb{I}\left[L_{2,\mathrm{sym}}(\Omega)^{3\times3}\right]\subseteq[\{0\}]\trace$
by \prettyref{eq:plastic}. Using this representation of $\trace c\Grad v,$
\prettyref{eq:heat2} reads as 
\[
\partial_{0,\rho}\left(c\tau_{0}^{-1}w+\trace cC^{-1}c\trace^{\ast}\right)\theta+\partial_{0,\rho}\trace cC^{-1}T-\dive q=c\tau_{0}^{-1}g.
\]
Using these equations, the system \prettyref{eq:elast}-\prettyref{eq:plastic}
can be written as 
\begin{multline*}
\left(\left(\begin{array}{c}
v\\
T\\
\theta\\
q
\end{array}\right),\left(\begin{array}{c}
f\\
0\\
g\\
0
\end{array}\right)\right)\in\partial_{0,\rho}\left(\begin{array}{cccc}
M & 0 & 0 & 0\\
0 & C^{-1} & C^{-1}c\trace^{\ast} & 0\\
0 & \trace cC^{-1} & c\tau_{0}^{-1}w+\trace cC^{-1}c\trace^{\ast} & 0\\
0 & 0 & 0 & 0
\end{array}\right)+\left(\begin{array}{cccc}
0 & 0 & 0 & 0\\
0 & 0 & 0 & 0\\
0 & 0 & 0 & 0\\
0 & 0 & 0 & \kappa^{-1}c^{-1}\tau_{0}
\end{array}\right)\\
+\left(\begin{array}{cccc}
0 & -\Dive & 0 & 0\\
-\Grad & \mathbb{I} & 0 & 0\\
0 & 0 & 0 & -\dive\\
0 & 0 & -\grad & 0
\end{array}\right).
\end{multline*}
Indeed, this system fits into our general framework with 
\[
M_{0}(t)\coloneqq\left(\begin{array}{cccc}
M(t) & 0 & 0 & 0\\
0 & C(t)^{-1} & C(t)^{-1}c\trace^{\ast} & 0\\
0 & \trace cC(t)^{-1} & c\tau_{0}^{-1}w(t)+\trace cC(t)^{-1}c\trace^{\ast} & 0\\
0 & 0 & 0 & 0
\end{array}\right)
\]
and 
\[
M_{1}(t)\coloneqq\left(\begin{array}{cccc}
0 & 0 & 0 & 0\\
0 & 0 & 0 & 0\\
0 & 0 & 0 & 0\\
0 & 0 & 0 & \kappa(t)^{-1}c^{-1}\tau_{0}
\end{array}\right)
\]
and 
\[
A\coloneqq\left(\begin{array}{cccc}
0 & -\Dive & 0 & 0\\
-\Grad & \mathbb{I} & 0 & 0\\
0 & 0 & 0 & -\dive\\
0 & 0 & -\grad & 0
\end{array}\right).
\]
The assumptions on the coefficient ensure that our solvability conditions
(a)-(d) are satisfied. Moreover, by imposing suitable boundary conditions,
e.g. Dirichlet boundary conditions for $v$ and $\theta$, we obtain
that 
\[
\left(\begin{array}{cccc}
0 & -\Dive & 0 & 0\\
-\Grad_{c} & 0 & 0 & 0\\
0 & 0 & 0 & -\dive\\
0 & 0 & -\grad_{c} & 0
\end{array}\right)
\]
is skew-selfadjoint and hence, maximal monotone. Since $\mathbb{I}$
is bounded and maximal monotone, \prettyref{cor:bounded_pert} yields
the maximal monotonicity of $A$. Moreover, $(0,0)\in A$ by assumption.

\subsection{Viscoplasticity with internal variables}

In this section we consider the equations of viscoplasticity with
internal variables. The problem belongs to the class of constitutive
equations which are studied in \cite{Alber_1998}.

As in the thermoplastic case we denote by $u\colon\R\to L_{2}(\Omega)^{3}$
the displacement field of a medium $\Omega\subseteq\mathbb{R}^{3}$
and by $T\colon\R\to L_{2,\mathrm{sym}}(\Omega)^{3\times3}$ the stress
tensor. Moreover, $z\colon\R\to L_{2}(\Omega)^{N}$, where $N\in\mathbb{N}$,
is the vector of internal variables. The model equations of viscoplasticity
with internal variables are 
\begin{align}
\partial_{0,\rho}M\partial_{0,\rho}u-\Dive T & =f,\label{eq:visco1}\\
T & =D(\Grad u-Bz),\label{eq:visco2}\\
(B^{*}T-Lz,\partial_{0,\rho}z) & \in g,\label{eq:visco3}
\end{align}

where $M\colon\R\to L(L_{2}(\Omega)^{3})$, the elasticity tensor
$D\colon\R\to L(L_{2,\mathrm{sym}}(\Omega)^{3\times3})$ and $L\colon\R\to L(L_{2}(\Omega)^{N})$
are bounded, Lipschitz-continuous, uniformly strictly positive definite
functions and $M(t),D(t),L(t)$ are selfadjoint for each $t\in\R$.
Furthermore, $g\subseteq L_{2}(\Omega)^{N}\oplus L_{2}(\Omega)^{N}$
is a bounded, maximal monotone relation with $(0,0)\in g$ and $B\colon L_{2}(\Omega)^{N}\to L_{2,\mathrm{sym}}(\Omega)^{3\times3}$
is linear and continuous. The mapping $B$ is linked to the inelastic
part of the strain tensor $\epsilon=\Grad u$ by $\epsilon_{p}=Bz$.
The volume force $f\colon\R\to L_{2}(\Omega)^{3}$ is given. This
system was studied in \cite{Alber2009} in the autonomous, quasi-static
case, where the focus was on the regularity of solutions.

To apply our solution theory to these equations we have to rewrite
the system. For doing so, we define $v:=\partial_{0,\rho}u$ and $w:=B^{*}T-Lz$.
Thus, we obtain $z=L^{-1}(B^{*}T-w)$ and we can reformulate (\ref{eq:visco1})
and (\ref{eq:visco3}) by
\begin{align*}
\partial_{0,\rho}Mv-\Dive T & =f,\\
(w,\partial_{0,\rho}L^{-1}(B^{*}T-w)) & \in g.
\end{align*}

Moreover, applying $\partial_{0,\rho}D^{-1}$ to equation (\ref{eq:visco2})
yields
\[
\partial_{0,\rho}D^{-1}T=\partial_{0,\rho}(\Grad u-BL^{-1}(B^{*}T-w))=\Grad v-\partial_{0,\rho}BL^{-1}B^{*}T+\partial_{0,\rho}BL^{-1}w.
\]

Hence, the system (\ref{eq:visco1})-(\ref{eq:visco3}) can be written
as
\[
\left(\left(\begin{array}{c}
v\\
w\\
T
\end{array}\right),\left(\begin{array}{c}
f\\
0\\
0
\end{array}\right)\right)\in\partial_{0,\rho}\left(\begin{array}{ccc}
M & 0 & 0\\
0 & L^{-1} & -L^{-1}B^{*}\\
0 & -BL^{-1}\; & D^{-1}+BL^{-1}B^{*}
\end{array}\right)+\left(\begin{array}{ccc}
0 & 0 & -\Dive\\
0 & g & 0\\
-\Grad & 0 & 0
\end{array}\right),
\]

which fits into our general framework. The operator 
\[
M_{0}(t):=\left(\begin{array}{ccc}
M(t) & 0 & 0\\
0 & L^{-1}(t) & -L(t)^{-1}B^{*}\\
0 & -BL(t)^{-1}\; & D(t)^{-1}+BL(t)^{-1}B^{*}
\end{array}\right)
\]

satisfies the solvability conditions (a) and (b). Concerning (c) and
(d), we observe that by means of a symmetric Gauß step

\[
\left(\begin{array}{cc}
L^{-1}(t) & -L(t)^{-1}B^{*}\\
-BL(t)^{-1}\; & D(t)^{-1}+BL(t)^{-1}B^{*}
\end{array}\right)
\]

is strictly positive definite if and only if
\[
\left(\begin{array}{cc}
L^{-1}(t) & 0\\
0 & D(t)^{-1}
\end{array}\right)
\]

is strictly positive definite, which holds by our assumptions on $L$
and $D$. The maximal monotonicity of

\[
A:=\left(\begin{array}{ccc}
0 & 0 & -\Dive\\
0 & g & 0\\
-\Grad & 0 & 0
\end{array}\right)=\left(\begin{array}{ccc}
0 & 0 & -\Dive\\
0 & 0 & 0\\
-\Grad & 0 & 0
\end{array}\right)+\left(\begin{array}{ccc}
0 & 0 & 0\\
0 & g & 0\\
0 & 0 & 0
\end{array}\right)
\]

can be obtained by imposing suitable boundary conditions on $v$ and
$T$ in order to make 
\[
\left(\begin{array}{ccc}
0 & 0 & -\Dive\\
0 & 0 & 0\\
-\Grad & 0 & 0
\end{array}\right)
\]
skew-selfadjoint and using \prettyref{cor:bounded_pert} (compare
Example 4.1).


\begin{thebibliography}{10}
\bibitem{Alber_1998} H.-D. Alber. \newblock {\em {Materials with memory. Initial-boundary value problems for   constitutive equations with internal variables.}} \newblock {Lecture Notes in Mathematics. 1682. Berlin: Springer. x, 166 p.},   1998.
\bibitem{Alber2009} H.-D. Alber and S.~Nesenenko. \newblock {Local $H^1$-regularity and $H^{1/3-\delta}$-regularity up to the   boundary in time dependent viscoplasticity.} \newblock {\em Asymptotic Anal.}, 63(3):151--187, 2009.
\bibitem{Benilan1972} P.~B{\'e}nilan. \newblock {\em {{\'E}quations d'{\'e}volution dans un espace de Banach   quelconque et applications}}. \newblock PhD thesis, Universit{\'e} Paris XI, 1972.
\bibitem{Brezis} H.~Brezis. \newblock {\em Operateurs maximaux monotones et semi-groupes de contractions   dans les espaces de Hilbert}. \newblock Universite Paris VI et CNRS, 1971.
\bibitem{Chelminski2006} K.~Che{\l}mi{\'n}ski and R.~Racke. \newblock {Mathematical analysis of thermoplasticity with linear kinematic   hardening.} \newblock {\em J. Appl. Anal.}, 12(1):37--57, 2006.
\bibitem{Crandall1972} M.~Crandall and A.~Pazy. \newblock {Nonlinear evolution equations in Banach spaces.} \newblock {\em Isr. J. Math.}, 11:57--94, 1972.
\bibitem{Evans1977} L.~Evans. \newblock {Nonlinear evolution equations in an arbitrary Banach space.} \newblock {\em Isr. J. Math.}, 26:1--42, 1977.
\bibitem{papageogiou} S.~Hu and N.~S. Papageorgiou. \newblock {\em Handbook of Multivalued Analysis}, volume 1: Theory. \newblock Springer, 1997.
\bibitem{Kalauch2011} A.~{Kalauch}, R.~{Picard}, S.~{Siegmund}, S.~{Trostorff}, and M.~{Waurick}. \newblock {A Hilbert Space Perspective on Ordinary Differential Equations with   Memory Term}. \newblock Technical report, TU Dresden, 2011. \newblock arXiv:1204.2924, to appear in J. Dyn. Differ.
\bibitem{Kato1953} T.~Kato. \newblock {Integration of the equation of evolution in a Banach space.} \newblock {\em J. Math. Soc. Japan}, 5:208--234, 1953.
\bibitem{Kobayasi1984} K.~Kobayasi, Y.~Kobayashi, and S.~Oharu. \newblock {Nonlinear evolution operators in Banach spaces.} \newblock {\em Osaka J. Math.}, 21:281--310, 1984.
\bibitem{lakshmikantham2010theory} V.~Lakshmikantham, S.~Leela, and F.~A. Mcrae. \newblock {\em {Theory of Causal Differential Equations}}. \newblock Atlantis Studies in Mathematics for Engineering and Science. World   Scientific Pub Co Inc, 2010.
\bibitem{Minty} G.~Minty. \newblock Monotone (nonlinear) operators in a hilbert space. \newblock {\em Duke Math. J.}, 29, 1962.
\bibitem{Pavel1981} N.~H. Pavel. \newblock {Nonlinear evolution equations governed by f-quasi-dissipative   operators.} \newblock {\em Nonlinear Anal., Theory Methods Appl.}, 5:449--468, 1981.
\bibitem{Picard} R.~Picard. \newblock {A structural observation for linear material laws in classical   mathematical physics.} \newblock {\em Math. Methods Appl. Sci.}, 32(14):1768--1803, 2009.
\bibitem{Picard_McGhee} R.~Picard and D.~McGhee. \newblock {\em {Partial differential equations. A unified Hilbert space   approach.}} \newblock {de Gruyter Expositions in Mathematics 55. Berlin: de Gruyter.   xviii}, 2011.
\bibitem{Picard2013_fractional} R.~Picard, S.~Trostorff, and M.~Waurick. \newblock {On Evolutionary Equations with Material Laws Containing Fractional   Integrals.} \newblock Technical report, TU Dresden, 2013. \newblock arXiv:1304.7620, submitted.
\bibitem{Picard2013_nonauto} R.~Picard, S.~Trostorff, M.~Waurick, and M.~Wehowski. \newblock {On Non-autonomous Evolutionary Problems.} \newblock Technical report, TU Dresden, 2013. \newblock arXiv:1302.1304, submitted.
\bibitem{picard1989hilbert} R.~H. Picard. \newblock {\em {Hilbert space approach to some classical transforms}}. \newblock Pitman research notes in mathematics series. Longman Scientific \&   Technical, 1989.
\bibitem{showalter_book} R.~E. Showalter. \newblock {\em Monotone Operators in Banach Space and Nonlinear Partial   Differential Equations}. \newblock American Mathematical Society, 1997.
\bibitem{Trostorff_2011} S.~Trostorff. \newblock {\em Well-posedness and causality for a class of evolutionary   inclusions}. \newblock PhD thesis, TU Dresden, 2011. \newblock URL:   \url{http://www.qucosa.de/fileadmin/data/qucosa/documents/7832/phd-thesis_trostorff.pdf}.
\bibitem{Trostorff2012_NA} S.~Trostorff. \newblock {An alternative approach to well-posedness of a class of differential   inclusions in Hilbert spaces.} \newblock {\em Nonlinear Anal., Theory Methods Appl., Ser. A, Theory Methods},   75(15):5851--5865, 2012.
\bibitem{Trostorff2012_nonlin_bd} S.~Trostorff. \newblock {Autonomous Evolutionary Inclusions with Applications to Problems   with Nonlinear Boundary Conditions.} \newblock {\em Int. J. Pure Appl. Math.}, 85(2):303--338, 2013.
\bibitem{Trostorff2013_stability} S.~Trostorff. \newblock {Exponential Stability for Linear Evolutionary Equations.} \newblock Technical report, TU Dresden, 2013. \newblock arXiv:1302.7209, submitted.
\bibitem{Waurick2013_causality} M.~Waurick. \newblock {A note on causality in reflexive Banach spaces}. \newblock Technical report, TU Dresden, 2013. \newblock arXiv:1306.3851, submitted.
\end{thebibliography}
\end{document}